\newtheorem{example}{Example}
\journal{International Journal for Numerical Methods in Engineering}
\begin{document}

\title{Product Of Exponentials (POE) Splines on Lie-Groups: Limitations, Extensions, and Application to $SO(3)$ and $SE(3)$}

\author[1]{Andreas M\"uller}

\authormark{M\"ULLER}
\titlemark{Product Of Exponentials (POE) Splines on Lie-Groups: Limitations,
Extensions, and Application to $SO(3)$ and $SE(3)$}

\address[1]{\orgdiv{Johannes Kepler University}, \orgname{Institute of
Robotics}, \orgaddress{\country{Austria}}}

\corres{Corresponding author Andreas M\"uller, \email{a.mueller@jku.at}}

\presentaddress{Johannes Kepler University, Altenberger Str. 69, 4040 Linz}

\abstract[Abstract]{Existing methods for constructing splines and B\'{e}zier curves on a Lie
group $G$ involve repeated products of exponentials deduced from local
geodesics, w.r.t. a Riemannian metric, or rely on general polynomials.
Moreover, each of these local curves is supposed to start at the identity of 
$G$. Both assumptions may not reflect the actual curve to be interpolated.
This paper pursues a different approach to construct splines on $G$. Local
curves are expressed as solutions of the Poisson equation on $G$. Therewith,
the local interpolations satisfies the boundary conditions while respecting
the geometry of $G$. A $k$th-order approximation of the solutions gives rise
to a $k$th-order product of exponential (POE) spline. Algorithms for
constructing 3rd- and 4th-order splines are derived from closed form
expressions for the approximate solutions. Additionally, spline algorithms
are introduced that allow prescribing a vector field the curve must follow
at the interpolation points. It is shown that the established algorithms,
where $k$th-order POE-splines are constructed by concatenating local curves
starting at the identity, cannot exactly reconstruct a $k$th-order motion.
To tackle this issue, the formulations are extended by allowing for local
curves between arbitrary points, rather than curves emanating from the
identity. This gives rise to a global $k$th-order spline with arbitrary
initial conditions. Several examples are presented, in particular the shape
reconstruction of slender rods modeled as geometrically non-linear Cosserat
rods.
}

\keywords{Splines, B{\'{e}}zier curves, De Casteljau
algorithm\color{black}, Lie groups, Poisson equation, approximations,
product of exponentials, $SE(3)$, $SO(3)$, rigid body motions, Cosserat
continua, Magnus expansion}

\jnlcitation{\cname{\author{M\"uller A}}.
\ctitle{Product Of Exponentials (POE) Splines on Lie-Groups: Limitations, Extensions, and Application to $SO(3)$ and $SE(3)$.} \cjournal{\it International Journal for Numerical Methods in Engineering} \cvol{2025}.}

\maketitle

\renewcommand\thefootnote{} \footnotetext{\textbf{Abbreviations:} POE,
product of exponentials.}

\renewcommand\thefootnote{\fnsymbol{footnote}} \setcounter{footnote}{1}

\section{Introduction}

%
Most systems with practical relevance evolve on Lie groups. Interpolation on
manifolds, and Lie groups in particular, are hence relevant in various
contexts, such as 3D animation \cite{Shoemake1985,Shoemake1987}, motion
planning of robots \cite{Huang2020,Sarker2020,Legnani2021,Tagliavini2023}
and unmanned aerial vehicles (UAV) \cite{Lovegrove2013,Dhullipalla2019},
interpolation of motion within numerical integration schemes for multibody
systems \cite{BauchauEppleHeo2008,HanBauchau2018,Ren2025}, and the geometric
modeling of Cosserat rods \cite{RuckerJonesWebster2010,JMR2024}.

Shoemake \cite{Shoemake1985} was the first who introduced an algorithm for
constructing B\'{e}zier curves of unit quaternions, i.e. curves on the
compact Lie group $Sp\left( 1\right) \cong SU\left( 2\right) $, and thus on $%
SO(3)$, for the purpose of animating spatial rotations. Key element is the
spherical linear interpolation (SLERP), which is the representation of a
rotation between two given attitudes, $g_{0},g_{1}\in SO(3)$, by the
exponential $\exp \left( \tau \boldsymbol{\xi }\right) $ on $SO\left(
3\right) $, with constant $\boldsymbol{\xi }=\log (g_{0}^{-1}g_{1})\in
so\left( 3\right) $ and path parameter $\tau \in \left[ 0,1\right] $. The so
defined curve corresponds to 1-parameter subgroup of rotations about a
constant axis. Moreover, in $SO\left( 3\right) $, as a compact Lie group,
the 1-parameter subgroups are geodesics. This suggests constructing B\'{e}%
zier curves by adopting the classical De Casteljau algorithm on Euclidean
vector spaces to compact Lie groups, where geodesics on the group replace
straight lines in Euclidean space. A De Casteljau algorithm for constructing
B\'{e}zier curves on $SO\left( 3\right) $ was introduced in \cite%
{ParkRavani-JMD1995} (this is summarized in appendix B). An algorithm to
construct 5th-order B\'{e}zier curves for quaternion interpolation, i.e. on $%
SU\left( 2\right) $, was presented in \cite{TanXingFanHong2018}. The
description of intermediated curves, which are subdivided in course of the
De Casteljau algorithm, by an exponential of the form above is valid on any
Lie group. This motivated introducing a De Casteljau algorithm on the
non-compact Lie group $SE\left( 3\right) $ of (rigid body) screw motions in 
\cite{GeRavani-JMD1994}, where screws are represented as dual quaternions.
The intermediate curves (screw motions) described by the exponential are
geodesics w.r.t. a left-invariant metric on $SE\left( 3\right) $. Working in
kinematic image space, a cubic interpolation of rigid body motions was
introduced in \cite{GeRavani-JMD1994-2}. The B\'{e}zier curves on $SE\left(
3\right) $ were also used to construct ruled surfaces from screw motions 
\cite{SprottRavani2002,Ding2002}. The construction of B\'{e}zier curves on
symmetric subspaces of $SO(3)$ was addressed in \cite{Wu-ARK2016}. Clearly,
the choice of metric, and its invariance properties, is crucial for
constructing meaningful B\'{e}zier curves. A thorough mathematical treatment
of De Casteljau algorithms on Riemannian spaces can be found in \cite%
{CrouchKunLeite1999}, and on B\'{e}zier curves and B-splines in \cite%
{Camarinha2001,Popiel2006,Huber2017}.

Instead of using the exponential map to describe curves in $SE\left(
3\right) $ in terms of canonical coordinates, in \cite{Selig2010} the Cayley
transformation \cite{RSPA2021} was employed. This gives rise to rational
curves. In \cite{RavaniMeghdari2004} rational Frenet-Serret motions in $%
SE\left( 3\right) $ were introduced by using rational curvature, and the
possibility of constructing rotation-minimizing motions was discussed. An
algorithm for constructing rational frame motions that minimize rotations
was presented in \cite{BartovnJuettlerWang2010}. The groups $SO\left(
3\right) $ and $SE\left( 3\right) $ can be modeled as algebraic varieties,
e.g. using Study's model \cite{HustySchroecker2010}, and as algebraic
groups, which shall provide a framework for developing algorithms to
construct rational curves. An exhaustive treatment of rational curves on $%
SO\left( 3\right) $ and $SE\left( 3\right) $ regarded as algebraic groups
can be found in \cite{LiYe2024}. A remarkable result presented in \cite%
{LiYe2024} is the generalization of Kempe's Universality theorem, which
states that any algebraic curve in the plane can be generated by a
mechanical linkage comprising rotational joints only, \cite%
{LiSchichoSchroecker2018} to homogeneous spaces.

Another approach to the interpolation on Lie groups is adopting the concept
of splines defined by piecewise polynomials. In \cite%
{ParkRavani1997,KangPark1998}, a 2-point interpolation on $SO\left( 3\right) 
$ was introduced, where the interpolation between two attitudes is $\exp
\left( p\left( \tau \right) \boldsymbol{\xi }\right) $, with a 3rd-order
polynomial $p\left( \tau \right) $ in $\tau $. Concatenating such 2-point
interpolations yields a cubic spline expressed as product of the
exponentials of the individual segments. Conceptually the same approach was
proposed in \cite{KimNam1995}. Again, such spline algorithms rely on a
distance metric, and an interpolation between two points should be
geodesics. A crucial point is the invariance of the particular Riemannian
metric when the Lie group is non-compact. This issue was addressed in \cite%
{CrouchLeite1991,CrouchLeite1995} with the principle of variations. For the
special case of interpolation on $SE\left( 3\right) $, the choice of metric
was investigated in \cite%
{ZefranKumar1996,ZefranKumar1998,ZefranKumarCroke-TRO1998,ZefranKumarCroke-IJRR1999}%
. Conditions for minimum acceleration and minimum energy curves in $SE\left(
3\right) $ were derived using a left-invariant metric, e.g. kinetic energy
metric. More general discussions on splines in curved spaces can be found in 
\cite{Noakes1989}, and for general symmetric spaces in \cite%
{BogfjellmoModinVerdier2018}. Curves with stationary acceleration on $%
SE\left( 3\right) $ were investigated in \cite{Selig-IMA2007}. It is to be
noticed that, due to lack of bi-invariant metric on $SE\left( 3\right) $,
stationary acceleration does not necessarily imply minimum acceleration.
Furthermore, the screw motion with canonical coordinates obtained with the
logarithm (as above) on $SE\left( 3\right) $, as used in the screw SLERP 
\cite{Kavan2006}, is not a geodesic \cite{DuffyARK1991}. In conclusion,
common to all interpolation methods, is that a certain form of the canonical
(exponential) coordinates is deduced from metric properties.

Instead of explicitly relying on metric properties, an alternative approach
is to construct interpolation curves as solutions of the Poisson equation on
a Lie group $G$, i.e. to reconstruct the curve in the group from a given
vector field in the corresponding Lie algebra $\mathfrak{g}$. The so
constructed interpolation naturally accounts for the geometry of $G$, which
is encoded in the Poisson equation, but additionally captures the particular
motion. In this paper, an approach to interpolation on Lie groups is
introduced making use of higher-order solutions of the Poisson equation
reported in \cite{ZAMM2010}. It does not rely on geodesics, i.e. on metric
properties, as the De Casteljau algorithms in \cite%
{Shoemake1985,GeRavani-JMD1994,ParkRavani-JMD1995,CrouchKunLeite1999}, nor
does it assume specific polynomial expressions, as the spline algorithms in 
\cite{ParkRavani1997,KangPark1998}. A somewhat related publication in this
direction is \cite{KryslEndres2005}, where approximate solutions were used
to derive a Newmark integration scheme on $SO\left( 3\right) $. The approach
presented in this paper shares concepts applied to the integration on Lie
groups, which can be viewed as interpolation problems when a flow on $G$ is
to be reconstructed \cite{Marthinsen1999}. Interpolation on $SE\left(
3\right) $ is an important topic for finite element formulations and Newmark
generalized $\alpha $ integration schemes, in general \cite%
{HanBauchau2018,Ren2025}. It should be mentioned that often the
interpolation derived from the logarithm on $SE\left( 3\right) $ is
incorrectly regarded as geodesic, e.g. in the finite element formulation in 
\cite{Ren2025}.%
%

The general interpolation problem, addressed in this paper, is to find a
curve $\bar{h}\left( t\right) ,t\in \left[ 0,T\right] $ in a
finite-dimensional connected Lie group $G$ through a given sequence $%
h_{0},h_{1}\ldots ,h_{n}\in G$, where $\bar{h}\left( t\right) $ has required
continuity and its derivatives satisfies certain boundary conditions. A more
specific problem is the interpolation when additionally samples of a vector
field, $\mathbf{v}_{0},\mathbf{v}_{1},\ldots ,\mathbf{v}_{n}\in \mathfrak{g}$%
, are given that must be met by the curve. In this paper, spline
interpolations are considered that are defined piecewise as%
\begin{equation}
\bar{h}_{i}\left( t\right) =h_{0}h_{1}\cdots h_{i-1}\exp \boldsymbol{\xi }%
_{i}^{\left[ k\right] }\left( t\right) ,\ t\in \left[ t_{i-1},t_{i}\right]
\label{hi}
\end{equation}%
where $\bar{h}_{i}\left( t_{i}\right) =h_{i}$, and $\boldsymbol{\xi }_{i}^{%
\left[ k\right] }\left( t\right) $ is a curve in the Lie algebra $\mathfrak{g%
}$ of $G$ defined as $k$th-order polynomial in $t\in \left[ t_{i-1},t_{i}%
\right] $. 
%
Such a curve will be referred to as a $k$th-order curve in $G$. 
%
Since (\ref{hi}) can be written as $\bar{h}_{i}\left( t\right) =h_{0}\exp 
\boldsymbol{\xi }_{1}^{\left[ k\right] }\left( t_{1}\right) \cdots \exp 
\boldsymbol{\xi }_{i-1}^{\left[ k\right] }\left( t_{i-1}\right) \exp 
\boldsymbol{\xi }_{i}^{\left[ k\right] }\left( t\right) $, it is referred to
as \emph{product of exponentials (POE) spline}. The special case of cubic
splines was reported in \cite{ParkRavani1997,KangPark1998}, where $%
\boldsymbol{\xi }_{i}^{\left[ 3\right] }\left( t\right) $ are 3rd-order
polynomials in $t$, with $\boldsymbol{\xi }_{i}^{\left[ 3\right] }\left(
t_{i-1}\right) =\mathbf{0}$. Further, globally defined $k$th-order splines
of the form%
\begin{equation}
\bar{h}\left( t\right) =\bar{h}_{0}\exp \boldsymbol{\xi }^{\left[ k\right]
}\left( t\right) ,\ t\in \left[ 0,T\right]  \label{hGlob1}
\end{equation}%
are introduced, with non-zero initial value $\boldsymbol{\xi }^{\left[ k%
\right] }\left( 0\right) \neq \mathbf{0}$, and thus $\bar{h}\left( 0\right)
\neq \bar{h}_{0}$. This enables interpolations of general curves with
parameterized initial value. The contributions of this paper are summarized
as follows:%
%

\begin{itemize}
\item The coordinate functions $\boldsymbol{\xi }_{i}^{\left[ k\right]
}\left( t\right) $ in (\ref{hi}) are derived consistently as $k$th-order
approximations of the exact solution of the Poisson equation on $G$ (section
2).

\item Closed form expressions for the 3rd- and 4th-order 2-point
interpolation with given initial and boundary values on the left trivialized
derivative $\bar{\mathbf{v}}\left( t\right) =\bar{h}\left( t\right) ^{-1}%
\bar{h}^{\prime }\left( t\right) \in \mathfrak{g}$ are presented (section
3). The local order of accuracy of the $k$th-oder interpolation is analyzed.

\item An algorithm for constructing 3rd- and 4th-order POE-splines through $%
h_{0},h_{1},\ldots ,h_{n}\in G$ is introduced (section 4.1).

\item A 3rd- and 4th-order POE-spline algorithm is presented that admits
prescribing values of the vector field $\bar{\mathbf{v}}\left( t\right) =%
\bar{h}\left( t\right) ^{-1}\bar{h}^{\prime }\left( t\right) \in \mathfrak{g}
$ at interpolation points (section 4.2). This enables prescribing terminal
conditions on $\bar{h}^{\prime }$, which is not possible in existing spline
formulations.

\item A higher-order 2-point interpolation that admits non-zero initial
values of the canonical coordinates (section 6).

\item An algorithm for constructing globally parameterized $k$th-order
splines of the form (\ref{hGlob1}) is proposed (section 7). Such splines can
exactly reconstruct a $k$th-order curve in $G$, i.e. a curve $h\left(
t\right) =h_{0}\exp \xi \left( t\right) $, with $\xi \left( t\right) $ of
degree $k$ in $t$. More importantly, they allow for non-zero initial values
of the coordinate function $\boldsymbol{\xi }^{\left[ k\right] }\left(
t\right) $ in (\ref{hGlob1}). The global spline is thus symmetric in the
sense that it admits non-zero terminal as well as initial values.

\item The 3rd- and 4th-order splines are applied to the problem of
reconstructing the shape of a Cosserat rod described as curve in $SE\left(
3\right) $, which is particularly important in the area of continuum and
soft robotics (examples 1 and 5).

\item A De Casteljau algorithm is summarized in appendix B, and results are
compared with those of the POE splines.
\end{itemize}

%

\section{Reduction and Reconstruction on Lie Groups}

\subsection{Poisson Equation}

Given a curve $g\left( \tau \right) \in G$ with (normalized) path parameter $%
\tau \in \left[ 0,1\right] $, left translating the vector field $g^{\prime
}\in T_{g}G$ to the identity $e\in G$ yields the left-invariant vector field 
$\mathbf{v}=g^{-1}g^{\prime }\in \mathfrak{g}$. This is known as (kinematic)
reduction \cite{MarsdenBook1995,BlochBook2003}. Prominent examples in rigid
body kinematics and continuum mechanics are rotation group $G=SO\left(
3\right) $ and the group of rigid body motions $G=SE\left( 3\right) $. In
these cases, $\mathbf{v}\in so\left( 3\right) $ is the body-fixed angular
velocity, and $\mathbf{v}\in se\left( 3\right) $ is the body-fixed twist,
respectively \cite{MurrayBook,SeligBook}. The reconstruction problem is to
determine $g\left( t\right) $ from a given vector field $\mathbf{v}\left(
t\right) $, i.e. to solve the \emph{reconstruction equation}%
\begin{equation}
g^{\prime }=g\mathbf{v}.  \label{Poisson}
\end{equation}%
As this equation (in the right-invariant form) was originally derived by
Poisson for spatial rotations, it is referred to as (generalized) Poisson
equation. It was also reported by Darboux \cite{Darboux1887}, which is why
it is also called Poisson-Darboux equation \cite{ConduracheAASAIAA2017}.

The solution of (\ref{Poisson}) can be expressed as%
\begin{equation}
g\left( \tau \right) =g_{0}\exp \boldsymbol{\xi }\left( \tau \right) ,\ \tau
\in \left[ 0,1\right]  \label{exp1}
\end{equation}%
with $\boldsymbol{\xi }\left( \tau \right) \in \mathfrak{g}$. The so defined 
$g$ satisfies (\ref{Poisson}) if and only if $\boldsymbol{\xi }$ satisfies%
\begin{equation}
\boldsymbol{\xi }^{\prime }=\mathbf{dexp}_{-\boldsymbol{\xi }}^{-1}\mathbf{v}
\label{locRec}
\end{equation}%
where $\mathbf{dexp}_{\boldsymbol{\xi }}$ is the right-trivialized
differential of the exp map \cite{RSPA2021}. The solution of the ODE system (%
\ref{Poisson}) on $G$ is thus reduced to solving the ODE (\ref{locRec}) on $%
\mathfrak{g}$. Equation (\ref{locRec}) is referred to as \emph{local
reconstruction equation}. The solution (\ref{locRec}) was reported in the
seminal papers \cite{Hausdorff1906,Iserles1984} and in \cite{Varadarajan1984}
in terms of a series expansion of the right-trivialized differential. In the
following, the series expansion of the inverse differential is used, which
was originally reported in \cite{Hausdorff1906},%
\begin{equation}
\mathbf{dexp}_{\boldsymbol{\xi }}^{-1}=\sum_{i\geq 0}\frac{B_{i}}{i!}\mathrm{%
ad}_{\boldsymbol{\xi }}^{i}  \label{dexpInv}
\end{equation}%
where $B_{i}$ are the Bernoulli numbers. The significance of the (local)
reconstruction problem in relation to the interpolation problem lies in the
fact that the solution of (\ref{Poisson}) respectively (\ref{locRec}), given
specific boundary values, serves as an interpolant. Moreover, since this
interpolant satisfies the reconstruction equation, it guarantees consistency
with the motion $g(t)$ to be interpolated. This is addressed in the
following.

\begin{remark}
The most prominent application of the local reconstruction equations is the
family of Munthe-Kaas integration schemes on Lie groups \cite%
{MuntheKaas-BIT1998,MuntheKaas1999,IserlesMuntheKaasNrsettZanna2000}, where
the original ODE system (\ref{Poisson}) on $G$ is transcribed to the ODE (%
\ref{locRec}) on $\mathfrak{g}$, and solved numerically by Runge-Kutta
methods (but any other integration can be applied). This is motivated by the
fact that (\ref{Poisson}) must satisfy non-linear invariants whereas (\ref%
{locRec}) must satisfy linear invariants only. The latter are encoded in the
vector representation of $\mathfrak{g}$, and thus automatically preserved
during the integration step.
\end{remark}

\subsection{A $k$th-Order Approximate Solution of the Poisson Equation}

Under the standard assumption $\boldsymbol{\xi }\left( 0\right) =0$, it was
shown in \cite{ZAMM2010} that the canonical coordinates $\boldsymbol{\xi }%
\left( \tau \right) $ in (\ref{exp1}), satisfying (\ref{locRec}), admit a
series expansion of the form%
\begin{equation}
\boldsymbol{\xi }\left( \tau \right) =\sum_{k\geq 0}\tfrac{\tau ^{k}}{k!}%
\boldsymbol{\Phi }_{k}^{0}  \label{xi}
\end{equation}%
where the coefficients satisfy the recursive relation%
\begin{equation}
\boldsymbol{\Phi }_{k}^{0}=\mathbf{v}_{0}^{\left( k-1\right) }-\left(
k-1\right) !\sum_{j=1}^{k-1}\tfrac{1}{\left( j-1\right) !}\mathbf{F}_{kj}%
\boldsymbol{\Phi }_{j}^{0}  \label{Phik3}
\end{equation}%
in terms of $\mathbf{v}_{0}$ and its derivatives, with%
\begin{equation}
\mathbf{F}_{kj}=\sum_{l=1}^{k-j}\tfrac{\left( -1\right) ^{l}}{\left(
l+1\right) !}%
\hspace{-2ex}%
\sum_{\substack{ i_{1},\ldots ,i_{l}\in \mathbb{N}_{0}  \\ i_{1}+\ldots
+i_{l}+j=k}}%
\hspace{-2ex}%
\mathrm{ad}_{\mathbf{Y}_{i_{1}}}\ldots \mathrm{ad}_{\mathbf{Y}_{i_{l}}}
\label{Fk}
\end{equation}%
where $\mathbf{Y}_{i}=\frac{1}{i!}\boldsymbol{\Phi }_{i}^{0}$ and $%
\boldsymbol{\Phi }_{0}^{0}=\mathbf{0}$. The coefficients up to order 5 are 
\begin{eqnarray}
\boldsymbol{\Phi }_{1}^{0} &=&\mathbf{v}_{0},\ \ \boldsymbol{\Phi }_{2}^{0}=%
\mathbf{v}_{0}^{\prime },\ \boldsymbol{\Phi }_{3}^{0}=\mathbf{v}_{0}^{\prime
\prime }+\frac{1}{2}\left[ \mathbf{v}_{0},\mathbf{v}_{0}^{\prime }\right] ,\ 
\boldsymbol{\Phi }_{4}^{0}=\mathbf{v}_{0}^{\prime \prime \prime }+\left[ 
\mathbf{v}_{0},\mathbf{v}_{0}^{\prime \prime }\right]  \label{PhiVal} \\
\boldsymbol{\Phi }_{5}^{0} &=&\mathbf{v}_{0}^{\left( 4\right) }+\frac{3}{2}%
\left[ \mathbf{v}_{0},\mathbf{v}_{0}^{\prime \prime \prime }\right] +\left[ 
\mathbf{v}_{0}^{\prime },\mathbf{v}_{0}^{\prime \prime }\right] +\frac{1}{2}%
\left[ \mathbf{v}_{0}^{\prime },\left[ \mathbf{v}_{0}^{\prime },\mathbf{v}%
_{0}\right] \right] -\frac{1}{6}\left[ \mathbf{v}_{0},\left[ \mathbf{v}_{0},%
\mathbf{v}_{0}^{\prime \prime }\right] \right] -\frac{1}{6}\left[ \mathbf{v}%
_{0},\left[ \mathbf{v}_{0},\left[ \mathbf{v}_{0},\mathbf{v}_{0}^{\prime }%
\right] \right] \right]  \notag
\end{eqnarray}%
where $\left[ \boldsymbol{\xi },\boldsymbol{\eta }\right] $ denotes the Lie
bracket of $\boldsymbol{\xi },\boldsymbol{\eta }\in \mathfrak{g}$.
Throughout the paper, the Lie bracket is also expressed as as $\left[ 
\boldsymbol{\xi },\boldsymbol{\eta }\right] =\mathbf{ad}_{\boldsymbol{\xi }}%
\boldsymbol{\eta }$, where as $\mathbf{ad}:\mathfrak{g}\times \mathfrak{g}%
\rightarrow \mathfrak{g}$ is the adjoint action\footnote{%
Throughout the paper, the vector representation of Lie algebra elements will
be used. For an $n$-dimensional Lie algebra, $\mathbf{ad}_{\boldsymbol{\xi }%
} $ is then a $n\times n$ matrix.} on $\mathfrak{g}$. Collecting all terms
with equal power of $\tau $, the expansion (\ref{xi}) can be written as%
\begin{equation}
\boldsymbol{\xi }\left( \tau \right) =\sum_{j=1}^{k}\frac{1}{j!}\tau ^{j}%
\mathbf{v}_{0}^{\left( j-1\right) }+\sum_{r=3}^{k}\tau ^{r}\mathbf{a}_{r}
\label{expansion}
\end{equation}%
with%
\vspace{-2ex}%
\begin{equation}
\mathbf{a}_{r}:=\sum_{l=2}^{r-1}\sum_{\substack{ i_{1},\ldots ,i_{l}\in 
\mathbb{N}_{0}  \\ i_{1}+\ldots +i_{l}+l=r}}%
\hspace{-2ex}%
a_{i_{1}\ldots i_{l}}\mathrm{ad}_{\mathbf{v}_{0}^{\left( i_{1}\right)
}}\ldots \mathrm{ad}_{\mathbf{v}_{0}^{\left( i_{l-1}\right) }}\mathbf{v}%
_{0}^{\left( i_{l}\right) }.  \label{ar}
\end{equation}%
The expression (\ref{expansion}) is a $k$th-order \emph{extrapolation }of
the solution of (\ref{locRec}) in terms of $\mathbf{v}_{0}$ and its first $%
k-1$ derivatives.

The non-zero coefficients in (\ref{ar}) are explicitly, for $r=1,\ldots ,5$,%
\begin{eqnarray}
\mathbf{a}_{3} &=&\frac{1}{12}[\mathbf{v}_{0},\mathbf{v}_{0}^{\left(
1\right) }],\ \mathbf{a}_{4}=\frac{1}{24}[\mathbf{v}_{0},\mathbf{v}%
_{0}^{\left( 2\right) }]  \label{arVal} \\
\mathbf{a}_{5} &=&\frac{1}{80}[\mathbf{v}_{0},\mathbf{v}_{0}^{\left(
3\right) }]+\frac{1}{120}[\mathbf{v}_{0}^{\left( 1\right) },\mathbf{v}%
_{0}^{\left( 2\right) }]+\frac{1}{240}[\mathbf{v}_{0}^{\left( 1\right) },[%
\mathbf{v}_{0}^{\left( 1\right) },\mathbf{v}_{0}]-\frac{1}{720}[\mathbf{v}%
_{0},[\mathbf{v}_{0},\mathbf{v}_{0}^{\left( 2\right) }]-\frac{1}{720}[%
\mathbf{v}_{0},[\mathbf{v}_{0},[\mathbf{v}_{0},\mathbf{v}_{0}^{\left(
1\right) }]]].  \notag
\end{eqnarray}%
The extrapolation (\ref{expansion}) provides a basis for deriving 2-point
interpolation formulae of certain approximation orders. It also enables
deriving $k$th-order spline interpolations in a consistent systematic way,
in contrast to other approaches, e.g. \cite{KangPark1998} where general
cubic extrapolations, were assumed in the derivation.

\begin{remark}
An approximation of the solution of the right Poisson equation, $g^{\prime }=%
\mathbf{v}g$, is obtained analogously as above. The solution is expressed as 
$g\left( \tau \right) =\exp \boldsymbol{\xi }\left( \tau \right) g_{0}$, and
the right-invariant vector field $\mathbf{v}=g^{\prime }g^{-1}$ satisfies
the ODE $\boldsymbol{\xi }^{\prime }=\mathbf{dexp}_{\boldsymbol{\xi }}^{-1}%
\mathbf{v}$ on $\mathfrak{g}$. The approximation is obtained with (\ref%
{Phik3}), but with the alternating sign $\left( -1\right) ^{l}$ in (\ref{Fk}%
) omitted \cite{ZAMM2010}. In case of spatial rotations and rigid body
motions, i.e. for Lie groups $SO\left( 3\right) $ and $SE\left( 3\right) $,
the right-invariant vector field is the spatial representation of angular
velocity and twist, respectively.
\end{remark}

\section{2-Point Interpolations with Zero Initial Value $\protect\xi \left(
0\right) =0$\label{sec2PointZero}}

\begin{definition}
A $k$th-order 2-point boundary value interpolation (in canonical
coordinates), between $g_{0}$ and $g_{1}$ on $G$ is a map%
\begin{equation}
\bar{g}\left( \tau \right) =\bar{g}_{0}\exp \boldsymbol{\xi }^{\left[ k%
\right] }\left( \tau \right) ,\ \tau \in \left[ 0,1\right]  \label{2PointDef}
\end{equation}%
satisfying boundary values $\bar{g}\left( 0\right) =g_{0}$, $\bar{g}\left(
1\right) =g_{1}$, and $\bar{\mathbf{v}}^{\left( s\right) }\left( 0\right) =%
\mathbf{v}_{0}^{\left( s\right) },s=0,\ldots ,n_{0}$ and $\bar{\mathbf{v}}%
^{\left( s\right) }\left( 1\right) =\mathbf{v}_{1}^{\left( s\right)
},s=0,\ldots ,n_{1}$, with $n_{0}+n_{1}=k-1$, where $\bar{\mathbf{v}}:=\bar{g%
}^{-1}\bar{g}^{\prime }\in \mathfrak{g}$ is the left-trivialized derivative
of $\bar{g}$, and $\boldsymbol{\xi }^{\left[ k\right] }\left( \tau \right) $
is a polynomial of degree $k$ in $\tau $.
\end{definition}

It is assumed in the following that $\boldsymbol{\xi }^{\left[ k\right]
}\left( 0\right) =0$, which implies that $\bar{g}_{0}=g\left( 0\right) $.
This is the standard assumption found in the literature. It will be relaxed
in section 6 
to allow for $\boldsymbol{\xi }^{\left[ k\right] }\left( 0\right) \neq 0$. A 
$k$th-order $C^{k-1}$ continuos initial value 2-point interpolation between $%
g_{0}$ and $g_{1}$ is derived in the following. The point of departure is
the closed form approximate solution (\ref{expansion}) of the reconstruction
equation (\ref{exp1}).

\subsection{Initial Value 2-Point Interpolation}

It is assumed that the initial values $\mathbf{v}_{0}^{\left( s\right) }=%
\mathbf{v}^{\left( s\right) }\left( 0\right) ,s=0,\ldots ,k-1$ are given,
along with the boundary values $g_{0}$ and $g_{1}$. At the terminal end, the
canonical coordinates are $\bar{\boldsymbol{\xi }}:=\log \left(
g_{0}^{-1}g_{1}\right) $. Evaluating the $k$th-order truncation of the
expansion (\ref{expansion}) at $\tau =1$, and solving this for the highest
derivative of $\mathbf{v}_{0}$ yields%
\begin{equation}
k!\mathbf{v}_{0}^{\left( k-1\right) }=\bar{\boldsymbol{\xi }}%
-\sum_{j=1}^{k-1}\frac{1}{j!}\mathbf{v}_{0}^{\left( j-1\right)
}+\sum_{r=3}^{k}\mathbf{a}_{r}.
\end{equation}%
This is substituted back into (\ref{expansion}) giving rise to the $k$\emph{%
th-order initial value interpolation formula} for the canonical coordinates
in (\ref{2PointDef}) 
\begin{equation}
\boldsymbol{\xi }^{\left[ k\right] }\left( \tau \right) =\;\tau ^{k}\bar{%
\boldsymbol{\xi }}+\sum_{j=1}^{k-1}\frac{1}{j!}(\tau ^{j}-\tau ^{k})\mathbf{v%
}_{0}^{\left( j-1\right) }+\sum_{r=3}^{k-1}(\tau ^{r}-\tau ^{k})\mathbf{a}%
_{r}  \label{2PointInt}
\end{equation}%
with $\mathbf{a}_{r}$ in (\ref{ar}). In particular, the \emph{3rd-, 4th-,
and 5th-order initial value interpolation} formulae are%
\begin{eqnarray}
\boldsymbol{\xi }^{\left[ 3\right] }\left( \tau \right) &=&\tau ^{3}\bar{%
\boldsymbol{\xi }}+\left( \tau -\tau ^{3}\right) \mathbf{v}_{0}+\frac{1}{2}%
\left( \tau ^{2}-\tau ^{3}\right) \mathbf{v}_{0}^{\prime }  \label{3PP} \\
\boldsymbol{\xi }^{\left[ 4\right] }\left( \tau \right) &=&\tau ^{4}\bar{%
\boldsymbol{\xi }}+\left( \tau -\tau ^{4}\right) \mathbf{v}_{0}+\frac{1}{2}%
\left( \tau ^{2}-\tau ^{4}\right) \mathbf{v}_{0}^{\prime }+\frac{1}{6}\left(
\tau ^{3}-\tau ^{4}\right) (\mathbf{v}_{0}^{\prime \prime }+\frac{1}{2}\left[
\mathbf{v}_{0},\mathbf{v}_{0}^{\prime }\right] )  \label{4PP} \\
\boldsymbol{\xi }^{\left[ 5\right] }\left( \tau \right) &=&\tau ^{5}\bar{%
\boldsymbol{\xi }}+\left( \tau -\tau ^{5}\right) \mathbf{v}_{0}+\frac{1}{2}%
\left( \tau ^{2}-\tau ^{5}\right) \mathbf{v}_{0}^{\prime }+\frac{1}{6}\left(
\tau ^{3}-\tau ^{5}\right) (\mathbf{v}_{0}^{\prime \prime }+\frac{1}{2}\left[
\mathbf{v}_{0},\mathbf{v}_{0}^{\prime }\right] )+\frac{1}{12}\left( \tau
^{4}-\tau ^{5}\right) (\mathbf{v}_{0}^{\prime \prime \prime }+\left[ \mathbf{%
v}_{0},\mathbf{v}_{0}^{\prime \prime }\right] ).
\end{eqnarray}%
The boundary conditions $\boldsymbol{\xi }^{\left[ k\right] }\left( 0\right)
=0,\boldsymbol{\xi }^{\left[ k\right] }\left( 1\right) =\bar{\boldsymbol{\xi 
}}$, and the initial condition on $\bar{\mathbf{v}}_{0}=\left. \bar{g}^{-1}%
\bar{g}^{\prime }\right\vert _{\tau =0}$ and on its $k-1$ derivatives are
satisfied by construction.

\begin{remark}
It was shown in \cite{ParkRavani1997} that, on $SO(3)$, that (\ref{3PP}) is
the minimum energy curve if $\mathbf{v}_{0}$ and $\mathbf{v}_{0}^{\prime }$
are zero. If the initial velocity and its derivatives are zero, all $k$%
th-order interpolations describe the same path (a 1-parameter subgroup of $G$%
) but the trajectories differ ($k$th-order polynomials in $\tau $).
\end{remark}

\begin{lemma}
\label{lemError}The $k$th-order initial value 2-point interpolation formula (%
\ref{2PointInt}) has local error%
\begin{equation}
\tau ^{k}%
\Big%
(\bar{\boldsymbol{\xi }}-\sum_{j=1}^{k}\frac{1}{j!}\mathbf{v}_{0}^{\left(
j-1\right) }-\sum_{r=3}^{k}\mathbf{a}_{r}%
\Big%
)+O(\tau ^{k+1}).  \label{LocError}
\end{equation}
\end{lemma}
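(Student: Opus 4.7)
My plan is to obtain the local error simply by subtracting the interpolation polynomial \eqref{2PointInt} from the exact series expansion of the solution of the local reconstruction equation, and reading off the leading term. The key observation is that the two expressions agree termwise in every power $\tau^j$ with $j<k$, so the only contributions to the residual come from the order-$k$ terms already visible in the formula plus the truncation tail from \eqref{expansion}.

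Concretely, the plan proceeds in four steps. First, I would extend the expansion \eqref{expansion} to one more order, writing the exact canonical coordinate as
\begin{equation*}
\boldsymbol{\xi}(\tau)=\sum_{j=1}^{k}\tfrac{1}{j!}\tau^{j}\mathbf{v}_{0}^{(j-1)}+\sum_{r=3}^{k}\tau^{r}\mathbf{a}_{r}+O(\tau^{k+1}),
\end{equation*}
the remainder being $O(\tau^{k+1})$ by the construction of the coefficients $\boldsymbol{\Phi}_k^0$ and $\mathbf{a}_r$ recalled in \eqref{xi}--\eqref{ar}. Second, I would split the sums in \eqref{2PointInt} as $(\tau^j-\tau^k)=\tau^j-\tau^k$ and $(\tau^r-\tau^k)=\tau^r-\tau^k$, and group the parts that match the exact expansion termwise together, separating out the residual $\tau^k$ pieces.

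Third, I would perform the subtraction $\boldsymbol{\xi}^{[k]}(\tau)-\boldsymbol{\xi}(\tau)$. For every $j\in\{1,\dots,k-1\}$ the contribution $\tfrac{1}{j!}\tau^{j}\mathbf{v}_{0}^{(j-1)}$ cancels against the corresponding term in the truncated series, and similarly every $\tau^{r}\mathbf{a}_{r}$ with $r\in\{3,\dots,k-1\}$ cancels. What remains at order $\tau^k$ is
\begin{equation*}
\tau^{k}\bar{\boldsymbol{\xi}}-\sum_{j=1}^{k-1}\tfrac{1}{j!}\tau^{k}\mathbf{v}_{0}^{(j-1)}-\sum_{r=3}^{k-1}\tau^{k}\mathbf{a}_{r}-\tfrac{1}{k!}\tau^{k}\mathbf{v}_{0}^{(k-1)}-\tau^{k}\mathbf{a}_{k},
\end{equation*}
plus the $O(\tau^{k+1})$ truncation remainder. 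Fourth, I would absorb the $j=k$ and $r=k$ terms into the sums with upper limit $k$, arriving at exactly the expression in \eqref{LocError}.

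The computation is essentially bookkeeping, so there is no conceptual obstacle; the only point requiring care is making sure the splitting of $(\tau^j-\tau^k)$ and $(\tau^r-\tau^k)$ is carried out symmetrically so that the matching parts cancel cleanly against the corresponding entries of the exact expansion. Interpreting the result, the parenthesised factor is precisely the defect with which the $k$th-order extrapolation at $\tau=1$ fails to reproduce $\bar{\boldsymbol{\xi}}=\log(g_0^{-1}g_1)$; hence the local error is this defect multiplied by $\tau^k$, which confirms that the interpolation inherits the order of the extrapolation used to derive it.
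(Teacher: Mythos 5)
Your bookkeeping for the termwise cancellation is correct and reproduces the leading coefficient in (\ref{LocError}): splitting $(\tau^j-\tau^k)$ and $(\tau^r-\tau^k)$, cancelling the $j\le k-1$ and $r\le k-1$ terms against the exact expansion (\ref{expansion}) extended to all orders, and folding the $j=k$, $r=k$ tail terms into the sums is exactly the computation that produces the bracketed defect. However, there is a gap in what you are computing the error \emph{of}. You implicitly take the local error to be the coordinate difference $\boldsymbol{\xi}^{[k]}(\tau)-\boldsymbol{\xi}(\tau)$ in the Lie algebra, but the paper defines it intrinsically on the group as $\log\bigl(g^{-1}(\tau)\,g^{[k]}(\tau)\bigr)$ with $g=g_0\exp\boldsymbol{\xi}$ and $g^{[k]}=g_0\exp\boldsymbol{\xi}^{[k]}$. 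Since the two exponentials do not commute, these quantities differ by the Baker--Campbell--Hausdorff correction terms $\tfrac12[\boldsymbol{\xi}^{[k]},\boldsymbol{\xi}]+\ldots$, and your argument never addresses why these do not contribute at order $\tau^k$.

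The repair is short but must be stated: writing $\boldsymbol{\xi}^{[k]}=\boldsymbol{\xi}+\mathbf{e}$ with $\mathbf{e}=O(\tau^k)$ (which is what your cancellation establishes), the first BCH bracket is $\tfrac12[\boldsymbol{\xi}+\mathbf{e},\boldsymbol{\xi}]=\tfrac12[\mathbf{e},\boldsymbol{\xi}]$, because the bracket of $\boldsymbol{\xi}$ with itself vanishes; since $\boldsymbol{\xi}(\tau)=O(\tau)$, this term is $O(\tau^{k+1})$, and the higher nested brackets are of still higher order. This is precisely the content of the last two display lines of the paper's proof (the self-bracket vanishes, the cross terms carry at least one extra power of $\tau$). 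Without this step your proof only bounds the error of the canonical coordinates, not the error of the interpolated curve on $G$, which is the quantity the lemma is used for (e.g.\ in the convergence study of Fig.~\ref{fig2Point_Beam2_error}b).
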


\begin{proof}
%
Let $g\left( \tau \right) =g_{0}\exp \boldsymbol{\xi }\left( \tau \right) $
be the curve in $G$ to be interpolated, with $\boldsymbol{\xi }\left(
0\right) =0$, and $g^{\left[ 3\right] }\left( \tau \right) =\bar{g}_{0}\exp 
\boldsymbol{\xi }^{\left[ k\right] }\left( \tau \right) $ be the
interpolant, with $\bar{g}_{0}=g_{0}$. The local error is introduced as $%
\log (g^{-1}g^{\left[ 3\right] })$. The expansion (\ref{expansion}) of $%
g\left( \tau \right) $ with $k\rightarrow \infty $ and the $k$th-order
interpolation (\ref{2PointInt}), the local error becomes%
\begin{eqnarray*}
\log (g^{-1}g^{\left[ 3\right] }) &=&\boldsymbol{\xi }^{\left[ k\right]
}\left( \tau \right) -\boldsymbol{\xi }\left( \tau \right) +\frac{1}{2}[%
\boldsymbol{\xi }^{\left[ k\right] }\left( \tau \right) ,\boldsymbol{\xi }%
\left( \tau \right) ]+\ldots \\
&=&\tau ^{k}%
\Big%
(\bar{\boldsymbol{\xi }}-\sum_{j=1}^{k}\frac{1}{j!}\mathbf{v}_{0}^{\left(
j-1\right) }-\sum_{r=3}^{k}\mathbf{a}_{r}%
\Big%
)-\sum_{j>k}\frac{1}{j!}\tau ^{j}\mathbf{v}_{0}^{\left( j-1\right)
}-\sum_{r>k}\tau ^{r}\mathbf{a}_{r} \\
&&+\frac{1}{2}\tau ^{k}%
\Big%
[\bar{\boldsymbol{\xi }}-\sum_{j=1}^{k-1}\frac{1}{j!}\mathbf{v}_{0}^{\left(
j-1\right) }-\sum_{r=3}^{k-1}\mathbf{a}_{r},\boldsymbol{\xi }\left( \tau
\right) 
\Big%
]+\frac{1}{2}%
\Big%
[\sum_{j=1}^{k-1}\frac{1}{j!}\tau ^{j}\mathbf{v}_{0}^{\left( j-1\right)
}+\sum_{r=3}^{k-1}\tau ^{r}\mathbf{a}_{r},\sum_{j\geq k}\frac{1}{j!}\tau ^{j}%
\mathbf{v}_{0}^{\left( j-1\right) }+\sum_{r\geq k}\tau ^{r}\mathbf{a}_{r}%
\Big%
] \\
&&+\frac{1}{2}%
\Big%
[\sum_{j=1}^{k-1}\frac{1}{j!}\tau ^{j}\mathbf{v}_{0}^{\left( j-1\right)
}+\sum_{r=3}^{k-1}\tau ^{r}\mathbf{a}_{r},\sum_{j=1}^{k-1}\frac{1}{j!}\tau
^{j}\mathbf{v}_{0}^{\left( j-1\right) }+\sum_{r=3}^{k-1}\tau ^{r}\mathbf{a}%
_{r}%
\Big%
] \\
&=&\tau ^{k}%
\Big%
(\bar{\boldsymbol{\xi }}-\sum_{j=1}^{k}\frac{1}{j!}\mathbf{v}_{0}^{\left(
j-1\right) }-\sum_{r=3}^{k}\mathbf{a}_{r}%
\Big%
)+O(\tau ^{k+1}).
\end{eqnarray*}%
The Lie bracket term in the third line vanishes, and the remaining terms are
of order higher than $k$ in $\tau $ leading to the local error (\ref%
{LocError}).%
%
\end{proof}

\begin{lemma}
\label{lemExactRec}The $k$th-order initial value 2-point interpolation
exactly reconstructs a $k$th-order curve defined on a 1-parameter subgroup
of $G$.
\end{lemma}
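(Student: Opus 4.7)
The plan is to exploit the fact that on a 1-parameter subgroup every Lie bracket collapses, so that both the exact canonical coordinate curve and the interpolation formula reduce to the same elementary Taylor polynomial. Concretely, a $k$th-order curve through $g_0$ lying on the 1-parameter subgroup generated by a fixed $\boldsymbol{\eta}\in\mathfrak{g}$ is, in canonical coordinates, $\boldsymbol{\xi}(\tau)=p(\tau)\boldsymbol{\eta}$ with $p$ a scalar polynomial of degree $k$ and $p(0)=0$. Since $\mathrm{ad}_{\boldsymbol{\xi}}\boldsymbol{\xi}' = p\,p'\,[\boldsymbol{\eta},\boldsymbol{\eta}]=0$, the Bernoulli series (\ref{dexpInv}) collapses to the identity in (\ref{locRec}), so the left-trivialized velocity is simply $\mathbf{v}(\tau) = \boldsymbol{\xi}'(\tau) = p'(\tau)\boldsymbol{\eta}$. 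All sampled derivatives satisfy $\mathbf{v}_0^{(j-1)} = p^{(j)}(0)\boldsymbol{\eta}$ and are collinear with $\boldsymbol{\eta}$; in particular, $\mathbf{v}_0^{(j-1)}=\mathbf{0}$ for $j>k$.

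The key observation is then that every term in (\ref{ar}) is a nested commutator of the vectors $\mathbf{v}_0^{(i)}$, all collinear with $\boldsymbol{\eta}$, hence $\mathbf{a}_r = \mathbf{0}$ for every $r\ge 3$. Evaluating the closed-form expansion (\ref{expansion}) at $\tau=1$---its tail beyond order $k$ now also vanishing---gives the identity $\bar{\boldsymbol{\xi}} := \log(g_0^{-1}g_1) = \sum_{j=1}^{k}\tfrac{1}{j!}\mathbf{v}_0^{(j-1)}$. Substituting $\mathbf{a}_r=\mathbf{0}$ together with this expression for $\bar{\boldsymbol{\xi}}$ into the $k$th-order interpolation formula (\ref{2PointInt}), the contribution $\tau^k\bar{\boldsymbol{\xi}}$ combines with the correction terms $-\tfrac{1}{j!}\tau^k\mathbf{v}_0^{(j-1)}$ to reassemble exactly $\sum_{j=1}^{k}\tfrac{\tau^j}{j!}\mathbf{v}_0^{(j-1)} = p(\tau)\boldsymbol{\eta} = \boldsymbol{\xi}(\tau)$ on all of $[0,1]$, so that $\boldsymbol{\xi}^{[k]} \equiv \boldsymbol{\xi}$ and therefore $\bar g \equiv g$.

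A shorter route is available via Lemma \ref{lemError}: both the $\tau^k$ prefactor $\bar{\boldsymbol{\xi}}-\sum_{j=1}^k \tfrac{1}{j!}\mathbf{v}_0^{(j-1)}-\sum_{r=3}^k \mathbf{a}_r$ and every higher-order contribution in the $O(\tau^{k+1})$ remainder vanish identically in the present setting, because $\mathbf{v}_0^{(j-1)}=\mathbf{0}$ for $j>k$ and $\mathbf{a}_r=\mathbf{0}$ for every $r$. I do not anticipate any real obstacle; the only subtle point is to justify invoking Lemma \ref{lemError} in its full $k\to\infty$ form, which is immediate here since the exact canonical coordinate $\boldsymbol{\xi}(\tau)=p(\tau)\boldsymbol{\eta}$ is itself a polynomial of degree $k$, leaving no higher-order contributions to control.
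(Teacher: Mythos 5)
Your proof is correct and follows essentially the same route as the paper: reduce everything to the collinear setting $\mathbf{v}_0^{(s)}=p^{(s+1)}(0)\boldsymbol{\eta}$, observe that all bracket terms $\mathbf{a}_r$ vanish, and check that the interpolation formula (\ref{2PointInt}) telescopes back to $p(\tau)\boldsymbol{\eta}$. You are somewhat more explicit than the paper about why the $\mathrm{dexp}$ series collapses and why the $\mathbf{a}_r$ vanish, but the core argument is identical.
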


\begin{proof}
%
A $k$th-order 1-parameter subgroup is defined by $g\left( \tau \right)
=g_{0}\exp (p\left( \tau \right) \boldsymbol{\xi })$, with constant $%
\boldsymbol{\xi }\in \mathfrak{g}$ and a polynomial $p\left( \tau \right)
=c_{0}+c_{1}\tau +\ldots +c_{k}\tau ^{k}$ of degree $k$. The velocity and
derivatives are thus $\mathbf{v}_{0}^{\left( s\right) }=p^{\left( s+1\right)
}\boldsymbol{\xi }$. The polynomial satisfies $p\left( 0\right) =0$ and $%
p\left( 1\right) =1$, thus $c_{0}=0$ and $c_{1}+\ldots +c_{k}=1$. The
derivatives of $p$ at $\tau =0$ are $p^{\left( s\right) }\left( 0\right)
=s!c_{s}$. The 2-point interpolation is then 
\begin{equation*}
\boldsymbol{\xi }^{\left[ k\right] }\left( \tau \right) =\tau ^{k}\bar{%
\boldsymbol{\xi }}+\sum_{j=1}^{k-1}\frac{1}{j!}(\tau ^{j}-\tau ^{k})\mathbf{v%
}_{0}^{\left( j-1\right) }+\sum_{r=3}^{k-1}(\tau ^{r}-\tau ^{k})\mathbf{a}%
_{r}=\tau ^{k}\bar{\boldsymbol{\xi }}+\sum_{j=1}^{k-1}c_{j}(\tau ^{j}-\tau
^{k})\boldsymbol{\xi }=\left( c_{1}\tau +c_{2}\tau ^{2}+\ldots +c_{k}\tau
^{k}\right) \boldsymbol{\xi }=p\left( \tau \right) \boldsymbol{\xi }
\end{equation*}%
hence exactly reproduce the $k$th-order curve.
\end{proof}

%
The following is a direct consequence of the De Casteljau algorithm 9 in
appendix B for constructing B\'{e}zier curves on Lie groups. The proof
follows immediately from the fact the log and exp map cancel (within their
uniqueness domains), and that the Lie group algorithm does reduces to the
classical De Casteljau algorithm on a vector space, which can exactly
reconstruct a $k$th-order curve with correctly chosen control points.

\begin{proposition}
A B\'{e}zier curve of order $k$ constructed with the De Casteljau algorithm
9 (appendix B) on $G$ exactly reconstructs a $k$th-order curve defined on a
1-parameter subgroup of $G$, if the control points (in terms of $\tau $) are
the Bernstein basis.
\end{proposition}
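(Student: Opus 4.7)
The plan is to reduce the Lie group De Casteljau algorithm to its classical Euclidean counterpart by exploiting the fact that a 1-parameter subgroup is an abelian copy of $\mathbb{R}$ sitting inside $G$. Fix the target curve $g(\tau)=g_{0}\exp(p(\tau)\boldsymbol{\xi})$ with $\boldsymbol{\xi}\in\mathfrak{g}$ constant and $p$ a polynomial of degree $k$, and let $S:=\{g_{0}\exp(s\boldsymbol{\xi}):s\in\mathbb{R}\}$. Expand $p$ in the Bernstein basis, $p(\tau)=\sum_{i=0}^{k}c_{i}B_{i}^{k}(\tau)$ with $B_{i}^{k}(\tau)=\binom{k}{i}\tau^{i}(1-\tau)^{k-i}$, and take the control points to be $q_{i}:=g_{0}\exp(c_{i}\boldsymbol{\xi})\in S$.

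The key one-step observation is the following closure property. For $a=g_{0}\exp(s_{1}\boldsymbol{\xi})$ and $b=g_{0}\exp(s_{2}\boldsymbol{\xi})$ in $S$, commutativity along the subgroup gives $a^{-1}b=\exp((s_{2}-s_{1})\boldsymbol{\xi})$, hence $\log(a^{-1}b)=(s_{2}-s_{1})\boldsymbol{\xi}$ within the injectivity domain of $\exp$, and the elementary interpolation step of Algorithm 9 produces
\begin{equation*}
a\exp\bigl(\tau\log(a^{-1}b)\bigr)=g_{0}\exp\bigl(((1-\tau)s_{1}+\tau s_{2})\boldsymbol{\xi}\bigr)\in S.
\end{equation*}
This is the only ingredient needed: each elementary step of the group De Casteljau recursion preserves $S$ and acts on the scalar coordinate along $\boldsymbol{\xi}$ exactly as the classical affine interpolation $(1-\tau)s_{1}+\tau s_{2}$.

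I would then iterate this observation. By induction on the De Casteljau level $r$, every intermediate point produced by Algorithm 9 has the form $g_{0}\exp(c_{i}^{(r)}(\tau)\boldsymbol{\xi})$, where $c_{i}^{(r)}(\tau)$ are precisely the intermediate scalar values of the classical De Casteljau recursion applied to $c_{0},\ldots,c_{k}$. After $k$ levels the algorithm outputs $g_{0}\exp(c_{0}^{(k)}(\tau)\boldsymbol{\xi})$, and the classical Bernstein-B\'{e}zier identity $c_{0}^{(k)}(\tau)=\sum_{i=0}^{k}c_{i}B_{i}^{k}(\tau)=p(\tau)$ yields exactly $g(\tau)$.

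The only real subtlety, already flagged in the remark preceding the proposition, is keeping each invocation of $\log$ well defined along the recursion. Since all intermediate points remain on $S$, this reduces to the scalar requirement that the differences $c_{j}^{(r)}(\tau)-c_{i}^{(r)}(\tau)$ stay in the injectivity interval of $\exp$ about $0\in\mathfrak{g}$; this is an implicit local hypothesis of the group-valued De Casteljau construction itself rather than an extra assumption for the proposition. No estimates or series manipulations are needed beyond this, because the 1-parameter subgroup hypothesis eliminates all non-commutative curvature terms that would otherwise obstruct the reduction.
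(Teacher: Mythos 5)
Your argument is correct and follows essentially the same route as the paper, which asserts that the result reduces to the classical De Casteljau algorithm because log and exp cancel along the (abelian) 1-parameter subgroup. You have simply made explicit what the paper leaves as a one-line remark: the closure of the subgroup under each elementary interpolation step, the induction over De Casteljau levels, and the final Bernstein identity.
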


%

\subsection{Boundary Value 2-Point Interpolation}

Now it is assumed that, in addition to some initial values, the terminal
value $\mathbf{v}_{1}$ is given. The general case, when derivatives of $%
\mathbf{v}_{1}$ are prescribed, is not addressed here.

A 2-point interpolation is derived from the initial value interpolation (\ref%
{2PointInt}). To this end, the derivative of the $k$th-order interpolation (%
\ref{2PointInt}) is evaluated at $\tau =1$, which yields, after some
manipulation,%
\begin{align}
\frac{\mathrm{d}^{s}}{\mathrm{d}\tau ^{s}}\boldsymbol{\xi }^{\left[ k\right]
}\left( 1\right) =\;& \frac{k!}{\left( k-s\right) !}\bar{\boldsymbol{\xi }}-%
\frac{s}{\left( k-s\right) !}\mathbf{v}_{0}^{\left( k-2\right)
}+\sum_{j=s}^{k-2}\frac{1}{\left( j-s\right) !}\mathbf{v}_{0}^{\left(
j-1\right) }-\sum_{j=1}^{k-2}\frac{k!}{j!\left( k-s\right) !}\mathbf{v}%
_{0}^{\left( j-1\right) }  \notag \\
& +\sum_{r=\max (3,s)}^{k}\frac{r!}{\left( r-s\right) !}\mathbf{a}%
_{r}-\sum_{r=3}^{k}\frac{k!}{\left( k-s\right) !}\mathbf{a}_{r}  \label{dxi2}
\end{align}%
for $1\leq s\leq k-1$. The derivative of $\boldsymbol{\xi }\left( \tau
\right) $ is also obtained exactly from (\ref{locRec}) as 
\begin{equation}
\frac{\mathrm{d}^{s}}{\mathrm{d}\tau ^{s}}\boldsymbol{\xi }\left( \tau
\right) =\boldsymbol{\Phi }_{s}(\boldsymbol{\xi },\mathbf{v},\mathbf{v}%
^{\prime },\ldots ,\mathbf{v}^{\left( s-1\right) })  \label{dxi1}
\end{equation}%
where%
\begin{equation}
\boldsymbol{\Phi }_{k}(\boldsymbol{\xi }\left( \tau \right) ,\mathbf{v}%
\left( \tau \right) ,\mathbf{v}^{\prime }\left( \tau \right) ,\ldots ,%
\mathbf{v}^{\left( s-1\right) }\left( \tau \right) ):=\sum_{i=1}^{k}%
\boldsymbol{\Phi }_{ki}(\boldsymbol{\xi }\left( \tau \right) )\mathbf{v}%
^{\left( i-1\right) }\left( \tau \right) ,k\geq 1  \label{Phik}
\end{equation}%
\begin{equation}
\mathrm{with\ \ \ \ }\boldsymbol{\Phi }_{ki}(\boldsymbol{\xi }\left( \tau
\right) ):=\binom{k-1}{i-1}\frac{\mathrm{d}^{k-i}}{\mathrm{d}\tau ^{k-i}}%
\mathbf{dexp}_{-\boldsymbol{\xi }\left( \tau \right) }^{-1}.  \label{Phik2}
\end{equation}%
Evaluated at $\tau =1$ yields $\frac{\mathrm{d}^{s}}{\mathrm{d}\tau ^{s}}%
\boldsymbol{\xi }\left( 1\right) =\boldsymbol{\Phi }_{s}(\bar{\boldsymbol{%
\xi }},\mathbf{v}_{1},\mathbf{v}_{1}^{\prime },\ldots ,\mathbf{v}%
_{1}^{\left( s-1\right) })$. The latter depends on $\mathbf{v}_{1}$ and its
derivatives, while (\ref{dxi2}) depends on $\mathbf{v}_{0}$ and its
derivatives. Equating the two at $\tau =1$, and solving (\ref{dxi2}) for the
highest derivative $\mathbf{v}_{0}^{\left( k-2\right) }$ yields%
\begin{equation}
\mathbf{v}_{0}^{\left( k-2\right) }=\frac{k!}{s}\bar{\boldsymbol{\xi }}-%
\frac{\left( k-s\right) !}{s}\boldsymbol{\Phi }_{s}(\bar{\boldsymbol{\xi }},%
\mathbf{v}_{1},\mathbf{v}_{1}^{\prime },\ldots ,\mathbf{v}_{1}^{\left(
s-1\right) })+\sum_{j=s}^{k-2}\frac{\left( k-s\right) !}{\left( j-s\right) !s%
}\mathbf{v}_{0}^{\left( j-1\right) }-\sum_{j=1}^{k-2}\frac{k!}{j!s}\mathbf{v}%
_{0}^{\left( j-1\right) }+\sum_{r=\max (3,s)}^{k}\frac{r!\left( k-s\right) !%
}{\left( r-s\right) !s}\mathbf{a}_{r}-\sum_{r=3}^{k}\frac{k!}{s}\mathbf{a}%
_{r}.  \label{vk}
\end{equation}%
Thus $\mathbf{v}_{0}^{\left( k-2\right) }$ can be expressed in terms of $%
\mathbf{v}_{0},\ldots ,\mathbf{v}_{0}^{\left( k-3\right) },\mathbf{v}%
_{1},\ldots ,\mathbf{v}_{1}^{\left( s-1\right) }$. In the following, only
the case $s=1$ is considered, in which the highest derivative $\mathbf{v}%
_{0}^{\left( k-2\right) }$ is replaced by the terminal value $\mathbf{v}_{1}$%
. This seems to be most relevant for applications.

For the 3rd-order interpolation, i.e. $k=3,s=1$, relation (\ref{vk}) yields%
\begin{equation}
\mathbf{v}_{0}^{\prime }=6\bar{\boldsymbol{\xi }}-2\mathrm{dexp}_{-\bar{%
\boldsymbol{\xi }}}^{-1}\mathbf{v}_{1}-4\mathbf{v}_{0}.
\end{equation}%
Inserting this result back into (\ref{3PP}) yields the \textbf{3rd-order
boundary value interpolation}\emph{\ }%
\begin{equation}
\boldsymbol{\xi }^{\left[ 3\right] }\left( \tau \right) =\left( 3\tau
^{2}-2\tau ^{3}\right) \bar{\boldsymbol{\xi }}+\tau \left( \tau -1\right)
^{2}\mathbf{v}_{0}+\left( \tau ^{3}-\tau ^{2}\right) 
\color[rgb]{0.7,0,0}%
\mathrm{dexp}_{-\bar{\boldsymbol{\xi }}}^{-1}\mathbf{v}_{1}%
\color{black}
\label{3PPB}
\end{equation}%
with prescribed initial and terminal velocity. Relation (\ref{vk}) evaluated
for the 4th-order interpolation ($k=4,s=1$) yields%
\begin{equation}
\mathbf{v}_{0}^{\prime }=24\bar{\boldsymbol{\xi }}-6\mathrm{dexp}_{-\bar{%
\boldsymbol{\xi }}}^{-1}\mathbf{v}_{1}-18\mathbf{v}_{0}-6\mathbf{v}%
_{0}^{\prime }-\frac{1}{2}[\mathbf{v}_{0},\mathbf{v}_{0}^{\prime }]
\end{equation}%
and, along with (\ref{4PP}), leads to the the \textbf{4th-order boundary
value interpolation}%
\begin{equation}
\boldsymbol{\xi }^{\left[ 4\right] }\left( \tau \right) =\left( 4\tau
^{3}-3\tau ^{4}\right) \bar{\boldsymbol{\xi }}+\tau \left( \tau -\tau
^{2}\right) \left( 1+2\tau \right) \mathbf{v}_{0}+\left( \tau ^{4}-\tau
^{3}\right) 
\color[rgb]{0.7,0,0}%
\mathrm{dexp}_{-\bar{\boldsymbol{\xi }}}^{-1}%
\color{black}%
\mathbf{v}_{1}+\frac{1}{2}\tau ^{2}\left( 1-\tau \right) ^{2}\mathbf{v}%
_{0}^{\prime }  \label{4PPB}
\end{equation}%
with prescribed initial velocity and derivative, and terminal velocity. In
the same way the \textbf{5th-order boundary value interpolation} ($k=5,s=1$%
), with additionally prescribed initial value $\mathbf{v}_{0}^{\prime \prime
}$, is obtained as%
\begin{eqnarray}
\boldsymbol{\xi }^{\left[ 5\right] }\left( \tau \right)  &=&\left( 5\tau
^{4}-4\tau ^{5}\right) \bar{\boldsymbol{\xi }}+\left( \tau -4\tau ^{4}+3\tau
^{5}\right) \mathbf{v}_{0}+\frac{1}{2}\left( \tau ^{2}-3\tau ^{4}+2\tau
^{5}\right) \mathbf{v}_{0}^{\prime } \\
&&+\frac{1}{6}\left( \tau ^{3}-2\tau ^{4}+\tau ^{5}\right) (\mathbf{v}%
_{0}^{\prime \prime }+\frac{1}{2}[\mathbf{v}_{0},\mathbf{v}_{0}^{\prime
}])+\left( \tau ^{5}-\tau ^{4}\right) 
\color[rgb]{0.7,0,0}%
\mathrm{dexp}_{-\bar{\boldsymbol{\xi }}}^{-1}%
\color{black}%
\mathbf{v}_{1}  \notag
\end{eqnarray}

\begin{remark}
It was shown in \cite{ZefranKumarCroke-TRO1998} that the 3rd-order
approximation (\ref{3PPB}) is exactly the minimum acceleration curve when
interpolating rigid body motions on the Lie group $SE(3)$. This was also
observed in \cite{ParkRavani1997} for the special case of zero initial and
terminal velocities, $\mathbf{v}_{0}=\mathbf{v}_{1}=\mathbf{0}$. In the
latter case, the minium acceleration curve (\ref{3PPB}) reduces to the
1-parameter motion $\boldsymbol{\xi }^{\left[ 3\right] }\left( \tau \right)
=\left( 3\tau ^{2}-2\tau ^{3}\right) \bar{\boldsymbol{\xi }}$, which is a
geodesic w.r.t. the left-invariant metric on $SE(3)$.
\end{remark}

\begin{example}
\label{exaBeam2Point}The generation of curves in the Lie group $%
SE(3)=SO(3)\ltimes \mathbb{R}^{3}$ of isometric and orientation preserving
transformations in Euclidean 3-space is of particular practical relevance.
Then, $\boldsymbol{\xi }\in {\mathbb{R}}^{6}\cong se\left( 3\right) $ is the
screw coordinate vector that generates a frame transformation $g\in SE\left(
3\right) $ via the exp map (\ref{expSE3}). Besides trajectory planning of
UAVs \cite{Lovegrove2013,Dhullipalla2019}, the shape reconstruction of
flexible rods, modeled as Cosserat continua, is an application of particular
importance for soft robots \cite%
{BlackTillRucker2018,Hussain2021,BriotBoyer2022,JMR2024}. The rod's cross
section performs spatial motions parameterized by the normalized arc length $%
\tau \in \left[ 0,1\right] $. The spatial pose of the rod's cross section is
represented by a rigid frame transformation $h\left( \tau \right) \in SE(3)$%
. A Cosserat rod is thus a one-dimensional continuum, represented as a curve
in $SE(3)$. The deformation field in body-fixed representation is then
defined by the Poisson equation (\ref{Poisson}) as $\mathbf{v}%
=g^{-1}g^{\prime }$ \cite{BriotBoyer2022}. As an example, the following
problem is addressed: Given the pose $g_{0}=g\left( 0\right) $ and $%
g_{1}=g\left( 1\right) $ at the start and terminal end, and the
deformation/strain $\mathbf{v}_{0}=\mathbf{v}\left( 0\right) $ and $\mathbf{v%
}_{1}=\mathbf{v}\left( 1\right) $ at the both ends, compute the shape of the
rod in a static equilibrium, i.e. the curve $g\left( t\right) $ in $SE(3)$
for which the elastic potential is stationary. This is a typical problem for
dual arm robotic manipulation of deformable slender objects \cite%
{AlmaghoutCherubiniKlimchik2024,AlmaghoutKlimchik2024}, and for continuum
robots \cite{Wang2018,XiaoJMR2023}.\newline
The static equilibrium is governed by the Euler-Poincar\'{e} equation \cite%
{BriotBoyer2022}%
\begin{equation}
\boldsymbol{\Lambda }^{\prime }-%
%
\mathbf{ad}_{\mathbf{v}}^{T}%
%
\boldsymbol{\Lambda }=\mathbf{W}  \label{Strain}
\end{equation}%
where $\boldsymbol{\Lambda }\left( \tau \right) \in se^{\ast }\left(
3\right) $ is the stress, and $\mathbf{W}\left( \tau \right) \in se^{\ast
}\left( 3\right) $ represents distributed loads along the beam. Assuming
linear elastic constitutive relations, the stress is related to the strain
as $\boldsymbol{\Lambda }=\mathbf{K}\left( \mathbf{v}-\mathbf{v}^{0}\right) $
with the stiffness matrix $\mathbf{K}$. Therein, $\mathbf{v}^{0}$ is in
correspondence to the geometry of the undeformed beam. For an initially
straight beam, where the $x$-axis of the cross sectional frame is aligned
with the beam center line, it is $\mathbf{v}^{0}=\left[ 0,0,0,1,0,0\right]
^{T}$. The cross section frame is located at the center of the cross section
and aligned with the principal axes. Then the stiffness matrix is $\mathbf{K}%
\left( \tau \right) =\mathrm{diag}\left(
GJ_{x},EI_{yy},EI_{zz},EA,GA,GA\right) $, with Young's modulus $E$, shear
stiffness $G$, the cross section area $A$, and the second area moments $%
I_{xx},I_{zz}$, and polar moment $J_{x}$. Inserting the constitutive
relation into the Euler-Poincar\'{e} equation (\ref{Strain}) yields the ODE
on $se\left( 3\right) $ for the deformation field $\mathbf{v}$ 
\begin{equation}
\mathbf{v}^{\prime }=\mathbf{v}_{0}^{\prime }+\mathbf{K}^{-1}(%
%
\mathbf{ad}_{\mathbf{v}}^{T}%
%
\mathbf{K}-\mathbf{K}^{\prime })\left( \mathbf{v}-\mathbf{v}^{0}\right) +%
\mathbf{K}^{-1}\mathbf{W}.  \label{ChiBar}
\end{equation}%
This ODE can be solved with given initial/terminal value of $\mathbf{v}$
along with the kinematic equation (\ref{Poisson}). 
%
The boundary deformation is obtained from (\ref{ChiBar}) as $\mathbf{v}_{0}=%
\mathbf{v}^{0}-\mathbf{K}^{-1}\left( 0\right) \bar{\mathbf{W}}\left(
0\right) $ and $\mathbf{v}\left( 1\right) =\mathbf{v}^{0}-\mathbf{K}%
^{-1}\left( 1\right) \bar{\mathbf{W}}\left( 1\right) $, where $\bar{\mathbf{W%
}}\left( 0\right) $ and $\bar{\mathbf{W}}\left( 1\right) $ are concentrated
loads at the start and terminal end, repsectiely. 
%
Notice that the above equations are normalized.\newline
The static equation (\ref{Strain}), respectively (\ref{ChiBar}), is only
used in this example to provide a reference solutions $\boldsymbol{\xi }%
\left( \tau \right) ,\mathbf{v}\left( \tau \right) $. These solutions are
compared with the 3rd- and 4th-order interpolations obtained with $\mathbf{v}%
_{0}$ and $\mathbf{v}_{1}$. The exponential map on $SE(3)$ and its
right-trivialized derivative $\mathrm{dexp}$ in closed form, and the
definition of Lie bracket on $se\left( 3\right) $ are summarized in appendix
A.\newline
For computations, a clamped slender rubber beam of 100\thinspace mm length,
a constant $8\times 8$\thinspace mm$^{2}$ square cross section, and material
parameters $E=10\,$MPa, $G=0.3\,$MPa is considered. Distributed loads $%
\mathbf{W}$ are neglected, thus only the wrench $\bar{\mathbf{W}}\left(
0\right) $ at the start, and $\bar{\mathbf{W}}\left( 1\right) $ at the
terminal end due to the geometric fixation are present. 
%
The ODE (\ref{ChiBar}) and (\ref{Poisson}) are solved with the Munthe-Kaas
method using a RK4 integration scheme \cite%
{MuntheKaas-BIT1998,HairerLubichWanner2006} to obtain a reference solutions $%
\boldsymbol{\xi }\left( \tau \right) $ and $\mathbf{v}\left( \tau \right) $.
The terminal wrench at $\tau =0$ is prescribed as $\bar{\mathbf{W}}\left(
0\right) =\left[ 0,-0.0293,-0.1277,0.0977,0.0665,-0.1950\right] $. From the
latter, the deformation at the start end is obtained as $\mathbf{v}_{0}=%
\left[ 0,0.85797,3.71876,0.99999,-0.00035,0.00102\right] $. With $%
\boldsymbol{\xi }\left( 0\right) =\mathbf{0}$ and $\mathbf{v}\left( 0\right)
=\mathbf{v}_{0}$, the ODE system is solved as initial value problem, which
yields the terminal pose of the cross section, relative to its initial pose
(Fig. \ref{fig2Point_Beam2_4th}), 
\begin{equation*}
g_{1}=%
\begin{bmatrix}
\mathbf{R}_{1} & \mathbf{r}_{1} \\ 
\mathbf{0} & 1%
\end{bmatrix}%
\in SE(3),\ \mathbf{R}_{1}=%
\begin{bmatrix}
-0.51519 & -0.42868 & -0.74217 \\ 
-0.69850 & -0.29163 & 0.65339 \\ 
-0.49653 & 0.85509 & -0.14923%
\end{bmatrix}%
\in SO(3),\ \mathbf{r}_{1}=%
\begin{bmatrix}
-0.14518 \\ 
0.29880 \\ 
-0.42338%
\end{bmatrix}%
\in {\mathbb{R}}^{3}
\end{equation*}%
and $\mathbf{v}_{1}=\left[ 2.92969,-0.50391,1,0.00119,0\right] $ and $%
\mathbf{v}_{0}^{\prime }=\left[ 0,5.71458,1.94894,-0.00006,0.00189,-0.00044%
\right] $.%
%
\newline
The 3rd- and 4th-order approximation are computed with the 2-point
interpolations (\ref{3PPB}) and (\ref{4PPB}), respectively. The deviation
from the reference solution is computed as $\boldsymbol{\varepsilon }^{\left[
k\right] }\left( \tau \right) =\log \left( \exp \left( -\boldsymbol{\xi }%
\left( \tau \right) \right) \exp (\boldsymbol{\xi }^{\left[ k\right] }\left(
\tau \right) )\right) ,k=3,4$, where $\boldsymbol{\varepsilon }^{\left[ k%
\right] }=\left[ \mathbf{x}^{\left[ k\right] },\mathbf{y}^{\left[ k\right] }%
\right] \in \mathbb{R}^{6}$ is the screw coordinate vector. As there is no
bi-invariant metric on $SE(3)$, the deviation is quantified separately for
the $SO(3)$ and $\mathbb{R}^{3}$ part, as $\varepsilon _{\mathrm{r}}^{\left[
k\right] }\left( \tau \right) =\left\Vert \mathbf{x}^{\left[ k\right]
}\left( \tau \right) \right\Vert $ and $\varepsilon _{\mathrm{p}}^{\left[ k%
\right] }\left( \tau \right) =\left\Vert \mathbf{y}^{\left[ k\right] }\left(
\tau \right) \right\Vert $, respectively. Fig. \ref{fig2Point_Beam2_error}a)
shows the results obtained with (\ref{3PPB}) and (\ref{4PPB}). For
completeness, the actual shape is shown in Fig. \ref{fig2Point_Beam2_4th}. 
\begin{figure}[b]
\begin{center}
\includegraphics[draft=false,height=4.cm]{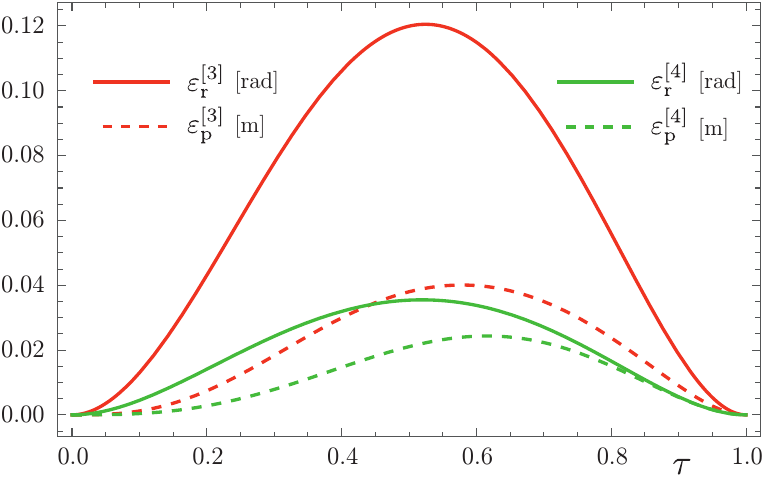} 
\hfil
\includegraphics[draft=false,height=4.cm]{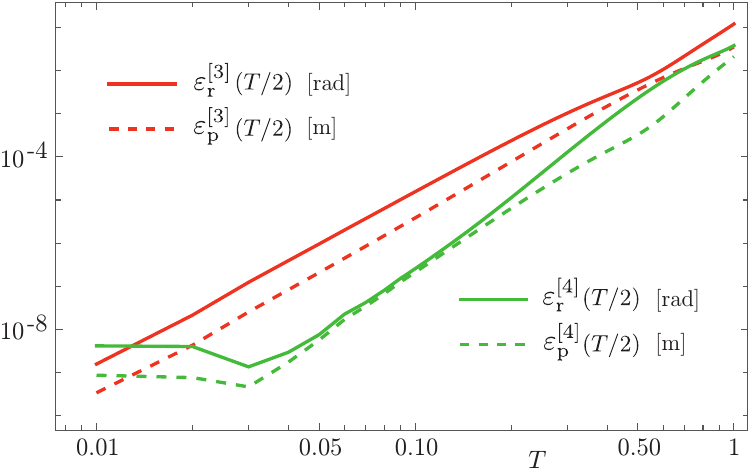}
\end{center}
\caption{a) Error when interpolating a curve in $SE(3)$, representing a
flexible rod, with given initial and terminal strain, using the 3rd- and
4th-order interpolation. b) Interpolation error at the midle of the flexible
rod as function of segment length $T$.\protect\vspace{-3ex}}
\label{fig2Point_Beam2_error}
\end{figure}
\newline
Finally, the length of the segment over which the shape is interpolated is
varied to show the order of interpolation error. To this end, the terminal
point of the interpolation is located at $0<T\leq 1$, i.e. the segment
length is $T$, and the terminal values are computed as $\bar{\boldsymbol{\xi 
}}=\boldsymbol{\xi }\left( T\right) $ and $\mathbf{v}_{1}=\mathbf{v}\left(
T\right) ,\mathbf{v}_{1}^{\prime }=\mathbf{v}^{\prime }\left( T\right) $.
The error measures $\varepsilon _{\mathrm{r}}$ and $\varepsilon _{\mathrm{p}%
} $ are evaluated at the center of the segment. Fig. \ref%
{fig2Point_Beam2_error}b) shows values of $\varepsilon _{\mathrm{r}}\left(
T/2\right) $ and $\varepsilon _{\mathrm{p}}\left( T/2\right) $ for $%
T=0.01,\ldots ,1$. The error decreases with order 3 and 4, respectively,
with the segment length in accordance with Lemma \ref{lemError}. 
\begin{figure}[b]
\begin{center}
a)\includegraphics[draft=false,height=5.cm]{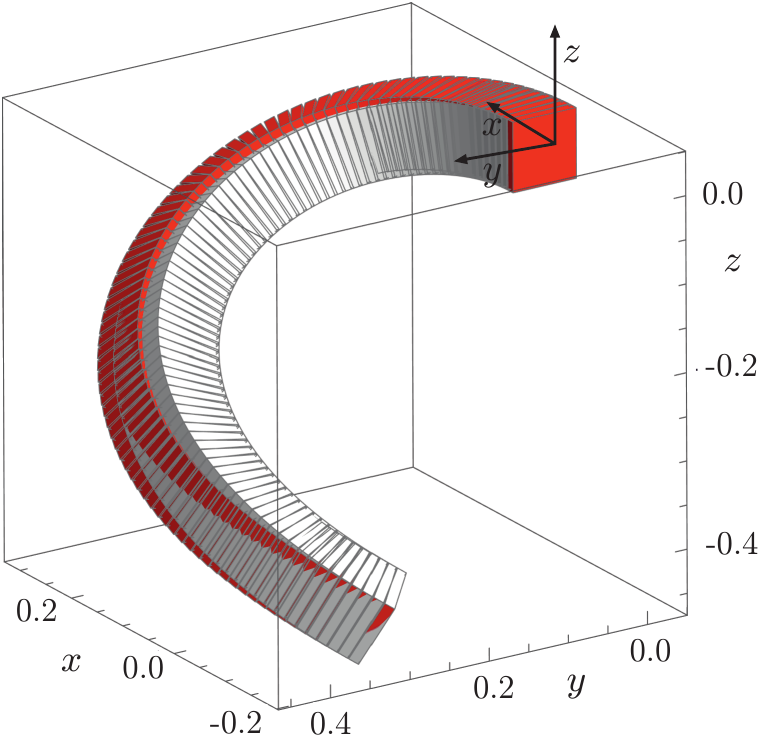} 
\hfil
b)\includegraphics[draft=false,height=5.cm]{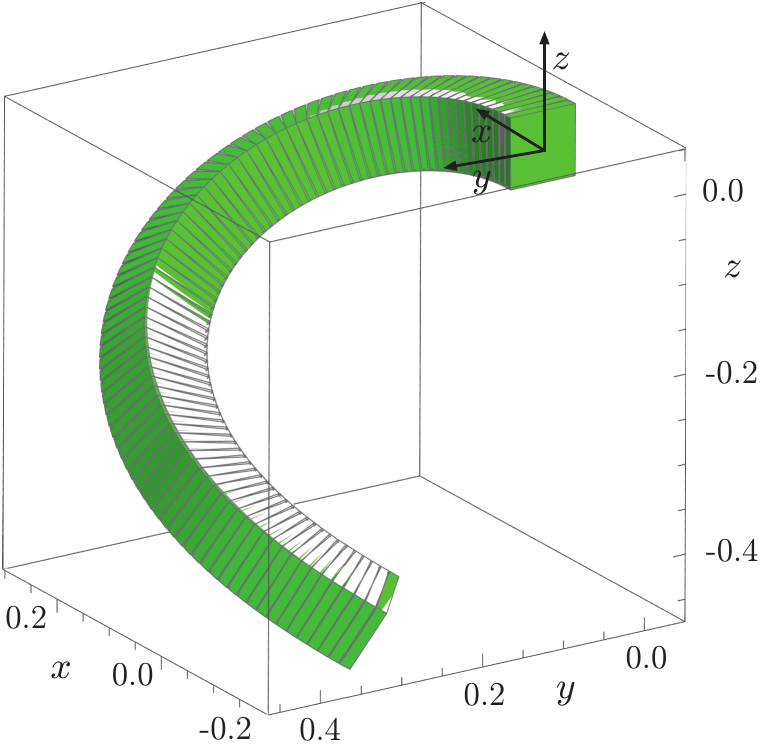}
\end{center}
\caption{Shape of the flexible rod as result of a) the 3rd- and b) the
4th-order interpolation. The exact numerical solution of the static balance
equation (\protect\ref{ChiBar}) is shown in gray color as a reference
(partially covered by the approximate solutions).}
\label{fig2Point_Beam2_4th}
\end{figure}
%
The result when interpolation the displacement field of a rod with
rectangular cross section is shown in Fig. \ref{fig2Point_Beam3}. The error
is similar to that of the rod with square cross section in Fig. \ref%
{fig2Point_Beam2_error}. 
\begin{figure}[b]
\begin{center}
a)\includegraphics[draft=false,height=4.cm]{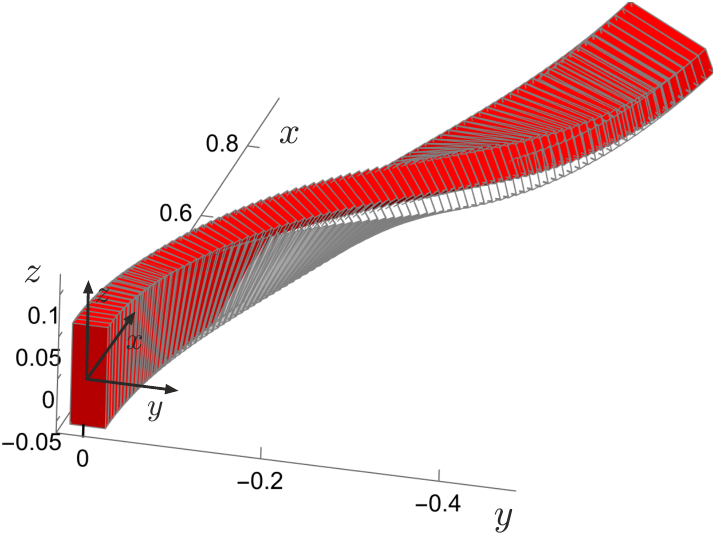} %
\hfil b)%
\includegraphics[draft=false,height=4.cm]{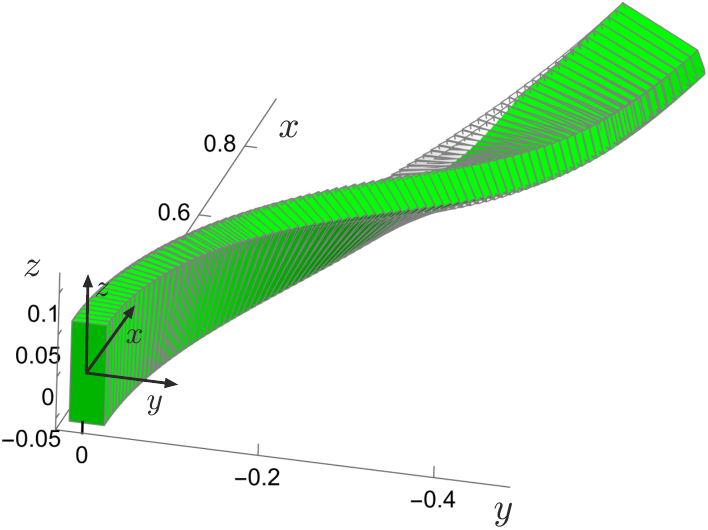}
\end{center}
\caption{%
%
Shape reconstruction of a flexible rod with rectangular cross section. a)
3rd- order (red) and b) 4th-order interpolation (green) are superposed to
the numerically exact solution (gray).}
\label{fig2Point_Beam3}
\end{figure}
%
\end{example}

\vspace{-3ex}

\section{Piecewise Parameterized POE-Splines}

A $k$th-order spline interpolating through $h_{0},h_{1},\ldots ,h_{n}\in G$
is constructed by concatenating 2-point initial value interpolations,
defining $n$ spline segments. The spline curve is described by the global
path parameter $t\in \left[ 0,T\right] $. In segment $i$, a $k$th-order
interpolation (\ref{2PointInt}) between $h_{i-1}$ and $h_{i}$ is introduced.
The range of the path parameter in segment $i$ is $t\in \left[ t_{i-1},t_{i}%
\right] $. A normalized local path parameter for segment $i$ is introduced
as $\tau _{i}\left( t\right) =\left( t-t_{i-1}\right) /\left(
t_{i}-t_{i-1}\right) \in \left[ 0,1\right] $. The local coordinates in
segment $i$ are denoted with $\boldsymbol{\xi }_{i}^{\left[ k\right] }\left(
\tau _{i}\right) $. It is assumed that the local coordinates are zero at the
begin of a segment, i.e. $\boldsymbol{\xi }_{i}^{\left[ k\right] }\left(
0\right) =0$.

\begin{definition}
A $k$th-order $C^{p}$-continuos POE-spline interpolating through $%
h_{0},h_{1},\ldots ,h_{n}\in G$ is a map defined recursively in segment $i$
as%
\begin{equation}
\bar{h}_{i}\left( t\right) =h_{i-1}\exp \boldsymbol{\xi }_{i}^{\left[ k%
\right] }\left( \tau _{i}\left( t\right) \right) ,\ t\in \left[ t_{i-1},t_{i}%
\right]  \label{POESpline}
\end{equation}%
satisfying the matching condition $\bar{h}_{i}\left( t_{i}\right) =h_{i}$,
the continuity conditions $\bar{\mathbf{v}}_{i}^{\left( s\right) }\left(
0\right) =\bar{\mathbf{v}}_{i-1}^{\left( s\right) }\left( 1\right)
,s=0,\ldots ,p-1$, and the initial conditions $\bar{\mathbf{v}}_{1}^{\left(
s\right) }\left( 0\right) =\mathbf{v}_{0}^{\left( s\right) },s=0,\ldots ,p-1$
on the vector fields $\bar{\mathbf{v}}_{i}\left( \tau _{i}\left( t\right)
\right) :=\bar{h}_{i}\left( t\right) ^{-1}\bar{h}_{i}^{\prime }\left(
t\right) $ and its derivatives ($\left( \cdot \right) ^{\prime }$ denotes
derivative w.r.t. $t$).
\end{definition}

\begin{remark}
The map (\ref{POESpline}) can be written as%
\begin{equation}
\bar{h}_{i}\left( t\right) =h_{0}\exp \bar{\boldsymbol{\xi }}_{1}\ldots \exp 
\bar{\boldsymbol{\xi }}_{i-1}\exp \boldsymbol{\xi }_{i}^{\left[ k\right]
}\left( \tau _{i}\left( t\right) \right) ,\ t\in \left[ t_{i-1},t_{i}\right]
\label{POESpline2}
\end{equation}%
with $\bar{\boldsymbol{\xi }}_{i}=\log (\bar{h}_{i-1}^{-1}\bar{h}_{i})$.
This is why (\ref{POESpline}) is referred to as POE-spline. A POE-spline
relies on a \emph{local interpolation} on $G$ around the identity 
%
with local coordinates $\boldsymbol{\xi }_{i}^{\left[ k\right] }$.%
%
\end{remark}

\begin{remark}
In all POE-spline formulations reported in the literature \cite%
{ParkRavani1997,KangPark1998}, it is presumed that $\boldsymbol{\xi }_{i}^{%
\left[ k\right] }\left( 0\right) =0$. That is, $\boldsymbol{\xi }_{i}^{\left[
k\right] }$ are local coordinates on $G$ used to reconstruct the
'incremental' motion within a segment, starting at the identity of $G$. This
concept is also employed in Munthe-Kaas integration schemes on Lie groups 
\cite{MuntheKaas-BIT1998,MuntheKaas1999,IserlesMuntheKaasNrsettZanna2000}.
The consequences of the fact that the spline is constructed piecewise in
terms of local coordinates for the $n$ segments separately will be discussed
in section 5 
.
\end{remark}

\subsection{A $k$th-Order $C^{k-1}$ Continuos POE-Spline\label{secPOESpline}}

The matching conditions are equivalent to $\exp \boldsymbol{\xi }_{i}^{\left[
k\right] }\left( 1\right) =\bar{h}_{i-1}^{-1}\bar{h}_{i}$, which are
satisfied by construction of the 2-point interpolation. It remains to impose
the continuity conditions. The 2-point interpolation in segment $i$ needs
initial values $\bar{\mathbf{v}}_{i}^{\left( s\right) }\left( 0\right)
,s=1,\ldots ,k-1$ (replacing $\mathbf{v}_{0}^{\left( s\right) }$ in (\ref%
{2PointInt})) that match those at the end of segment $i$. The vector field
obtained from the interpolation in segment $i-1$ is 
\begin{equation}
\bar{\mathbf{v}}_{i-1}\left( \tau _{i-1}\right) =\bar{h}_{i-1}\left(
t\right) ^{-1}\bar{h}_{i-1}^{\prime }\left( t\right) =\tfrac{1}{t_{i}-t_{i-1}%
}\mathrm{dexp}_{-\boldsymbol{\xi }_{i-1}^{\left[ k\right] }\left( \tau
_{i-1}\right) }(\boldsymbol{\xi }_{i-1}^{\left[ k\right] \prime }\left( \tau
\right) )  \label{cont1}
\end{equation}%
noting that $\frac{d}{dt}=\frac{1}{t_{i}-t_{i-1}}\frac{d}{d\tau }$. At the
end of segment $i-1$ this is $\bar{\mathbf{v}}_{i-1}\left( 1\right) =\tfrac{1%
}{t_{i}-t_{i-1}}\mathrm{dexp}_{-\bar{\boldsymbol{\xi }}_{i}}(\boldsymbol{\xi 
}_{i-1}^{\left[ k\right] \prime }\left( 1\right) )$, which delivers the
initial value for the 2-point interpolation in segment $i$. The higher-order
continuity conditions are 
\begin{equation}
\bar{\mathbf{v}}_{i}^{\left( s\right) }\left( 0\right) =\tfrac{1}{\left(
t_{i}-t_{i-1}\right) ^{s+1}}\frac{d^{s}}{d\tau ^{s}}\left. \mathrm{dexp}_{-%
\boldsymbol{\xi }_{i-1}^{\left[ k\right] }\left( \tau \right) }(\boldsymbol{%
\xi }_{i-1}^{\left[ k\right] \prime }\left( \tau \right) )\right\vert _{\tau
=1}
\end{equation}%
with $\boldsymbol{\xi }_{i-1}^{\left[ k\right] }\left( 1\right) =\bar{%
\boldsymbol{\xi }}_{i-1}$.

For a 3rd-order spline with 2nd-order continuity, these relations are%
\begin{eqnarray}
\bar{\mathbf{v}}_{i}\left( 0\right) &=&\tfrac{1}{t_{i}-t_{i-1}}\mathrm{dexp}%
_{-\bar{\boldsymbol{\xi }}_{i-1}}(\boldsymbol{\xi }_{i-1}^{\left[ k\right]
\prime }\left( 1\right) ) \\
\bar{\mathbf{v}}_{i}^{\prime }\left( 0\right) &=&\tfrac{1}{\left(
t_{i}-t_{i-1}\right) ^{2}}\left( \mathrm{dexp}_{-\bar{\boldsymbol{\xi }}%
_{i-1}}(\boldsymbol{\xi }_{i-1}^{\left[ k\right] \prime \prime }\left(
1\right) )-(\mathrm{D}_{-\bar{\boldsymbol{\xi }}_{i-1}}\mathrm{dexp})(%
\boldsymbol{\xi }_{i-1}^{\left[ k\right] \prime }\left( 1\right) )(%
\boldsymbol{\xi }_{i-1}^{\left[ k\right] \prime }\left( 1\right) )\right) .
\label{vdot}
\end{eqnarray}%
A 4th-order spline with 3rd-order continuity, must satisfy the additional
condition%
\begin{eqnarray}
\bar{\mathbf{v}}_{i}^{\prime \prime }\left( 0\right) &=&\tfrac{1}{\left(
t_{i}-t_{i-1}\right) ^{2}}%
\Big%
(\mathrm{dexp}_{-\bar{\boldsymbol{\xi }}_{i-1}}(\boldsymbol{\xi }_{i-1}^{%
\left[ k\right] \prime \prime }\left( 1\right) )-2(\mathrm{D}_{-\bar{%
\boldsymbol{\xi }}_{i-1}}\mathrm{dexp})(\boldsymbol{\xi }_{i-1}^{\left[ k%
\right] \prime }\left( 1\right) )(\boldsymbol{\xi }_{i-1}^{\left[ k\right]
\prime \prime }\left( 1\right) )  \notag \\
&&-(\mathrm{D}_{-\bar{\boldsymbol{\xi }}_{i-1}}\mathrm{dexp})(\boldsymbol{%
\xi }_{i-1}^{\left[ k\right] \prime \prime }\left( 1\right) )(\boldsymbol{%
\xi }_{i-1}^{\left[ k\right] \prime }\left( 1\right) )+(\mathrm{D}_{-\bar{%
\boldsymbol{\xi }}_{i-1}}^{2}\mathrm{dexp})(\boldsymbol{\xi }_{i-1}^{\left[ k%
\right] \prime }\left( 1\right) )(\boldsymbol{\xi }_{i-1}^{\left[ k\right]
\prime }\left( 1\right) )(\boldsymbol{\xi }_{i-1}^{\left[ k\right] \prime
}\left( 1\right) )%
\Big%
).
\end{eqnarray}%
The derivatives of $\boldsymbol{\xi }_{i-1}^{\left[ k\right] }\left( \tau
_{i}\right) $ are obtained from (\ref{2PointInt}), particularly from (\ref%
{3PP}) and (\ref{4PP}) for $k=3,4$, respectively. With these continuity
conditions, it is straightforward to derive the 3rd-order $C^{2}$ continuos
spline Algorithm 1, and the 4th-order $C^{3}$ continuos spline Algorithm 2
below. An equivalent 3rd-order algorithm was presented in \cite%
{ParkRavani1997,KangPark1998} for the special case of POE-splines on $%
SO\left( 3\right) $, where 2-point interpolations are derived as minimum
energy curves. That is, the spline algorithm in \cite%
{ParkRavani1997,KangPark1998}, constructed by combining minimum energy
curves, is recovered as the POE-spline constructed from the 2-point
interpolations derived from 3rd-order approximate solutions of the Poisson
equation. 
%
The computation effort of the POE spline algorithm grows linear with the
number of points, i.e. it has linear complexity.%
%

\begin{tabular}[t]{l}
\  \\ 
\textbf{Algorithm 1: 3rd-Order }$C^{2}$\textbf{\ continuos POE-Spline}%
\vspace{1.5ex}
\\ 
\begin{tabular}{ll}
\begin{tabular}[t]{l}
\textbf{1. Input:}%
\vspace{0.5ex}
\\ 
\hspace{2ex}%
\begin{tabular}{ll}
\textbullet
& Knot points $t_{0},t_{1},\ldots ,t_{N}$ \\ 
\textbullet
& Samples $h_{0},h_{1},\ldots ,h_{n}$ \\ 
\textbullet
& Initial values $\mathbf{v}_{0},\dot{\mathbf{v}}_{0}$%
\end{tabular}%
\end{tabular}
& 
\begin{tabular}[t]{l}
\textbf{2. Initialization:}%
\vspace{0.5ex}
\\ 
\hspace{2ex}%
\begin{tabular}{ll}
\textbullet
& $\boldsymbol{\alpha }_{0}^{\ast }:=\mathbf{v}_{0},\ \boldsymbol{\beta }%
_{0}^{\ast }:=\dot{\mathbf{v}}_{0}$ \\ 
\textbullet
& $T_{0}:=1$%
\end{tabular}%
\end{tabular}
\\ 
\begin{tabular}[t]{l}
\textbf{3. Computation of Spline Coefficients:} \\ 
\hspace{2ex}%
\textbf{FOR} $i=1,\ldots ,N$ \textbf{DO}%
\vspace{0.5ex}
\\ 
\hspace{2ex}%
\begin{tabular}[t]{ll}
\textbullet
& $T_{i}:=t_{i}-t_{i-1}$ \\ 
\textbullet
& $\delta _{i}:=T_{i}/T_{i-1}$ \\ 
\textbullet
& $\bar{\boldsymbol{\xi }}_{i}:=\log (h_{i-1}^{-1}h_{i})$ \\ 
\textbullet
& $\boldsymbol{\alpha }_{i-1}:=\delta _{i}\boldsymbol{\alpha }_{i-1}^{\ast }$
\\ 
\textbullet
& $\boldsymbol{\beta }_{i-1}:=\delta _{i}^{2}\boldsymbol{\beta }_{i-1}^{\ast
}$ \\ 
\textbullet
& $\mathbf{a}_{i}:=3\boldsymbol{\xi }_{i}-2\boldsymbol{\alpha }_{i-1}-\frac{1%
}{2}\boldsymbol{\beta }_{i-1}$ \\ 
\textbullet
& $\mathbf{b}_{i}:=6\boldsymbol{\xi }_{i}-6\boldsymbol{\alpha }_{i-1}-2%
\boldsymbol{\beta }_{i-1}$ \\ 
\textbullet
& $\boldsymbol{\alpha }_{i}^{\ast }:=\mathrm{dexp}_{-\bar{\boldsymbol{\xi }}%
_{i}}\mathbf{a}_{i}$ \\ 
\textbullet
& $\boldsymbol{\beta }_{i}^{\ast }:=\mathrm{dexp}_{-\bar{\boldsymbol{\xi }}%
_{i}}\mathbf{b}_{i}-(\mathrm{D}_{-\bar{\boldsymbol{\xi }}_{i}}\mathrm{dexp}%
)\left( \mathbf{a}_{i}\right) \mathbf{a}_{i}$%
\end{tabular}%
\vspace{0.5ex}
\\ 
\ \ \ \ \textbf{END}%
\end{tabular}%
\hspace{-5ex}
& 
\begin{tabular}[t]{l}
\textbf{4. Evaluation} at $t\in \left[ t_{i}-t_{i-1}\right] $, with $\tau
_{i}\left( t\right) =\frac{t-t_{i-1}}{T_{i}}$:%
\vspace{0.5ex}
\\ 
\hspace{2ex}%
$%
\begin{tabular}{rl}
$\boldsymbol{\xi }_{i}^{\left[ 3\right] }{}\left( \tau _{i}\right) $%
\hspace{-2.5ex}
& $=\tau _{i}^{3}\bar{\boldsymbol{\xi }}_{i}+\left( \tau _{i}-\tau
_{i}^{3}\right) \boldsymbol{\alpha }_{i-1}+\frac{1}{2}\left( \tau
_{i}^{2}-\tau _{i}^{3}\right) \boldsymbol{\beta }_{i-1}$ \\ 
$h^{\left[ 3\right] }\left( t\right) $%
\hspace{-2.5ex}
& $=h_{i-1}\exp \boldsymbol{\xi }_{i}^{\left[ 3\right] }{}\left( \tau
_{i}\right) $%
\end{tabular}%
$%
\end{tabular}%
\end{tabular}
\\ 
\ 
\end{tabular}

\begin{tabular}[t]{l}
\textbf{Algorithm 2: 4th-Order }$C^{3}$\textbf{\ continuos POE-Spline}%
\vspace{1.5ex}
\\ 
\begin{tabular}[t]{l}
\textbf{1. Input:}%
\vspace{0.5ex}
\\ 
\hspace{2ex}%
\begin{tabular}{ll}
\textbullet
& Knot points $t_{0},t_{1},\ldots ,t_{N}$ \\ 
\textbullet
& Samples $h_{0},h_{1},\ldots ,h_{n}$ \\ 
\textbullet
& Initial values $\mathbf{v}_{0},\dot{\mathbf{v}}_{0},\ddot{\mathbf{v}}_{0}$%
\end{tabular}%
\end{tabular}%
\hspace{8ex}%
\begin{tabular}[t]{l}
\textbf{2. Initialization:}%
\vspace{0.5ex}
\\ 
\hspace{2ex}%
\begin{tabular}{ll}
\textbullet
& $\boldsymbol{\alpha }_{0}^{\ast }:=\mathbf{v}_{0},\ \boldsymbol{\beta }%
_{0}^{\ast }:=\dot{\mathbf{v}}_{0},\ \boldsymbol{\gamma }_{0}^{\ast }:=\ddot{%
\mathbf{v}}_{0}$ \\ 
\textbullet
& $T_{0}:=1$%
\end{tabular}%
\end{tabular}%
\vspace{1.5ex}
\\ 
\begin{tabular}[t]{l}
\textbf{3. Computation of Spline Coefficients:} \\ 
\hspace{2ex}%
\textbf{FOR} $i=1,\ldots ,N$ \textbf{DO}%
\vspace{0.5ex}
\\ 
\hspace{2ex}%
\begin{tabular}[t]{ll}
\textbullet
& $T_{i}:=t_{i}-t_{i-1}$ \\ 
\textbullet
& $\delta _{i}:=T_{i}/T_{i-1}$ \\ 
\textbullet
& $\bar{\boldsymbol{\xi }}_{i}:=\log (h_{i-1}^{-1}h_{i})$ \\ 
\textbullet
& $\boldsymbol{\alpha }_{i-1}:=\delta _{i}\boldsymbol{\alpha }_{i-1}^{\ast }$
\\ 
\textbullet
& $\boldsymbol{\beta }_{i-1}:=\delta _{i}^{2}\boldsymbol{\beta }_{i-1}^{\ast
}$ \\ 
\textbullet
& $\boldsymbol{\gamma }_{i-1}:=\delta _{i}^{3}\boldsymbol{\gamma }%
_{i-1}^{\ast }$ \\ 
\textbullet
& $\mathbf{a}_{i}:=4\boldsymbol{\xi }_{i}-3\boldsymbol{\alpha }_{i-1}-%
\boldsymbol{\beta }_{i-1}-\frac{1}{12}\left[ \boldsymbol{\alpha }_{i-1},%
\boldsymbol{\beta }_{i-1}\right] -\frac{1}{6}\boldsymbol{\gamma }_{i-1}$ \\ 
\textbullet
& $\mathbf{b}_{i}:=12\boldsymbol{\xi }_{i}-12\boldsymbol{\alpha }_{i-1}-5%
\boldsymbol{\beta }_{i-1}-\frac{1}{2}\left[ \boldsymbol{\alpha }_{i-1},%
\boldsymbol{\beta }_{i-1}\right] -\boldsymbol{\gamma }_{i-1}$ \\ 
\textbullet
& $\mathbf{c}_{i}:=24\boldsymbol{\xi }_{i}-24\boldsymbol{\alpha }_{i-1}-12%
\boldsymbol{\beta }_{i-1}-\frac{3}{2}\left[ \boldsymbol{\alpha }_{i-1},%
\boldsymbol{\beta }_{i-1}\right] -3\boldsymbol{\gamma }_{i-1}$ \\ 
\textbullet
& $\boldsymbol{\alpha }_{i}^{\ast }:=\mathrm{dexp}_{-\bar{\boldsymbol{\xi }}%
_{i}}\mathbf{a}_{i}$ \\ 
\textbullet
& $\boldsymbol{\beta }_{i}^{\ast }:=\mathrm{dexp}_{-\bar{\boldsymbol{\xi }}%
_{i}}\mathbf{b}_{i}-(\mathrm{D}_{-\bar{\boldsymbol{\xi }}_{i}}\mathrm{dexp}%
)\left( \mathbf{a}_{i}\right) \mathbf{a}_{i}$ \\ 
\textbullet
& $\boldsymbol{\gamma }_{i}^{\ast }:=\mathrm{dexp}_{-\bar{\boldsymbol{\xi }}%
_{i}}\mathbf{c}_{i}-2(\mathrm{D}_{-\bar{\boldsymbol{\xi }}_{i}}\mathrm{dexp}%
\left( \mathbf{a}_{i}\right) \mathbf{b}_{i}-(\mathrm{D}_{-\bar{\boldsymbol{%
\xi }}_{i}}\mathrm{dexp})\left( \mathbf{b}_{i}\right) \mathbf{a}_{i}+(%
\mathrm{D}_{-\bar{\boldsymbol{\xi }}_{i}}^{2}\mathrm{dexp})\left( \mathbf{a}%
_{i}\right) \left( \mathbf{a}_{i}\right) \mathbf{a}_{i}$%
\end{tabular}%
\vspace{0.5ex}
\\ 
\ \ \ \ \textbf{END}%
\end{tabular}%
\vspace{1.5ex}
\\ 
\begin{tabular}[t]{l}
\textbf{4. Evaluation} at $t\in \left[ t_{i}-t_{i-1}\right] $, with $\tau
_{i}\left( t\right) =\frac{t-t_{i-1}}{T_{i}}$:%
\vspace{0.5ex}
\\ 
\hspace{2ex}%
\begin{tabular}{rl}
$\boldsymbol{\xi }_{i}^{\left[ 4\right] }{}\left( \tau _{i}\right) $%
\hspace{-2.5ex}
& $=\tau _{i}^{4}\bar{\boldsymbol{\xi }}_{i}+\left( \tau _{i}-\tau
_{i}^{4}\right) \boldsymbol{\alpha }_{i-1}+\frac{1}{2}\left( \tau
_{i}^{2}-\tau _{i}^{4}\right) \boldsymbol{\beta }_{i-1}+\frac{1}{6}\left(
\tau _{i}^{3}-\tau _{i}^{4}\right) \left( \boldsymbol{\gamma }_{i-1}+\frac{1%
}{2}\left[ \boldsymbol{\alpha }_{i-1},\boldsymbol{\beta }_{i-1}\right]
\right) $ \\ 
$h^{\left[ 4\right] }\left( t\right) $%
\hspace{-2.5ex}
& $=h_{i-1}\exp \boldsymbol{\xi }_{i}^{\left[ 4\right] }{}\left( \tau
_{i}\right) $%
\end{tabular}%
\end{tabular}%
\end{tabular}

\newpage

\subsection{A $k$th-Order $C^{k-2}$ Continuos POE-Spline with given
Velocities at Interpolation Points\label{secPOEVel}}

The POE-spline in section \ref{secPOESpline} allows prescribing the initial
velocity and derivatives. It does not admit prescribing terminal values. It
is often desirable, however, to prescribe the terminal velocity, and
moreover to interpolate through given states, i.e. given $h$ and $\mathbf{v}$
at the sample points. This is addressed in the following.

In addition to $h_{0},h_{1},\ldots ,h_{n}\in G$, now the velocities $\mathbf{%
v}_{0},\mathbf{v}_{1},\ldots ,\mathbf{v}_{n}\in \mathfrak{g}$ at the
interpolation points are given. A POE-spline is derived by concatenating
2-point boundary value interpolations as they allow for prescribing $\mathbf{%
v}$ and its derivatives at the start and terminal end of a segment, i.e. at
the knot points. The 3rd- and 4th-order POE-splines are presented here. The
general $k$th-order case becomes rather involved, and seems not to have
practical relevance.

The 3rd-order interpolation (\ref{3PPB}) is used within a segment. In
segment $i$ with $\tau \in \left[ t_{i-1},t_{i}\right] $, the initial
velocity is $\mathbf{v}_{i-1}$ and terminal velocity is $\mathbf{v}_{i}$,
replacing $\mathbf{v}_{0}$ and $\mathbf{v}_{1}$ in (\ref{3PPB}). Therewith,
the $C^{1}$ continuity of the curve obtained by concatenating $n$ such
interpolations is ensured. The corresponding computational scheme is shown
as Algorithm 3. A 4th-order spline is constructed using (\ref{4PPB}) to
interpolate in segment $i$. The initial and terminal values in (\ref{4PPB})
are set to $\mathbf{v}_{i-1}$ and $\mathbf{v}_{i}$ at the knot points of
segment $i$. The derivative $\mathbf{v}_{i-1}^{\prime }$ at the start of
segment $i$ is again calculated from the interpolation in segment $i-1$
using (\ref{cont1}). The so obtained spline curve is $C^{2}$ continuos. This
gives rise to Algorithm 4.

\begin{tabular}[t]{l}
\  \\ 
\textbf{Algorithm 3: 3rd-Order }$C^{1}$\textbf{\ continuos POE Spline with
given Velocities}%
\vspace{1.5ex}
\\ 
\begin{tabular}{ll}
\begin{tabular}[t]{l}
\textbf{1. Input:}%
\vspace{0.5ex}
\\ 
\hspace{2ex}%
\begin{tabular}{ll}
\textbullet
& Knot points $t_{0},t_{1},\ldots ,t_{N}$ \\ 
\textbullet
& Samples $h_{0},h_{1},\ldots ,h_{n}$ \\ 
\textbullet
& Velocities $\mathbf{v}_{0},\mathbf{v}_{1},\ldots ,\mathbf{v}_{n}$%
\end{tabular}%
\end{tabular}
& 
\begin{tabular}[t]{l}
\textbf{2. Initialization:}%
\vspace{0.5ex}
\\ 
\hspace{2ex}%
\begin{tabular}{lll}
\textbullet
& $\boldsymbol{\alpha }_{0}^{\ast }:=\mathbf{v}_{0}$ & 
\textbullet%
$\ \ 
\color[rgb]{0.7,0,0}%
\bar{\boldsymbol{\xi }}_{0}=\mathbf{0}%
\color{black}%
$ \\ 
\textbullet
& $T_{0}:=1$ & 
\end{tabular}%
\end{tabular}
\\ 
\begin{tabular}[t]{l}
\textbf{3. Computation of Spline Coefficients:} \\ 
\hspace{2ex}%
\textbf{FOR} $i=1,\ldots ,N$ \textbf{DO}%
\vspace{0.5ex}
\\ 
\hspace{2ex}%
\begin{tabular}[t]{ll}
\textbullet
& $T_{i}:=t_{i}-t_{i-1}$ \\ 
\textbullet
& $\delta _{i}:=T_{i}/T_{i-1}$ \\ 
\textbullet
& $\bar{\boldsymbol{\xi }}_{i}:=\log (h_{i-1}^{-1}h_{i})$ \\ 
\textbullet
& $\boldsymbol{\alpha }_{i}^{\ast }:=T_{i}%
\color[rgb]{0.7,0,0}%
\mathbf{v}_{i}%
\color{black}%
$ \\ 
\textbullet
& $\boldsymbol{\alpha }_{i-1}:=T_{i}%
\color[rgb]{0.7,0,0}%
\mathbf{v}_{i-1}%
\color{black}%
$ \\ 
\textbullet
& $\mathbf{a}_{i}:=\mathrm{dexp}_{-\bar{\boldsymbol{\xi }}_{i}}^{-1}%
\boldsymbol{\alpha }_{i}^{\ast }$%
\end{tabular}%
\vspace{0.5ex}%
\end{tabular}
& 
\begin{tabular}[t]{l}
\textbf{4. Evaluation} at $t\in \left[ t_{i}-t_{i-1}\right] $, with $\tau
_{i}\left( t\right) =\frac{t-t_{i-1}}{T_{i}}$:%
\vspace{0.5ex}
\\ 
\hspace{2ex}%
$%
\begin{tabular}{rl}
$\boldsymbol{\xi }_{i}^{\left[ 3\right] }{}\left( \tau _{i}\right) $%
\hspace{-2.5ex}
& $=\left( 3\tau _{i}^{2}-2\tau _{i}^{3}\right) 
\color[rgb]{0.7,0,0}%
\bar{\boldsymbol{\xi }}_{i}%
\color{black}%
+\tau _{i}\left( \tau _{i}-1\right) ^{2}%
\color[rgb]{0.7,0,0}%
\boldsymbol{\alpha }_{i}%
\color{black}%
+\left( \tau _{i}^{3}-\tau _{i}^{2}\right) \mathbf{a}_{i}$ \\ 
$h^{\left[ 3\right] }\left( t\right) $%
\hspace{-2.5ex}
& $=h_{i-1}\exp \boldsymbol{\xi }_{i}^{\left[ 3\right] }{}\left( \tau
_{i}\right) $%
\end{tabular}%
$%
\end{tabular}%
\end{tabular}
\\ 
\ 
\end{tabular}

\begin{tabular}[t]{l}
\textbf{Algorithm 4: 4th-Order }$C^{2}$\textbf{\ continuos POE Spline with
given Velocities}%
\vspace{1.5ex}
\\ 
\begin{tabular}{ll}
\begin{tabular}[t]{l}
\textbf{1. Input:}%
\vspace{0.5ex}
\\ 
\hspace{2ex}%
\begin{tabular}{ll}
\textbullet
& Knot points $t_{0},t_{1},\ldots ,t_{N}$ \\ 
\textbullet
& Samples $h_{0},h_{1},\ldots ,h_{n}$ \\ 
\textbullet
& Velocities $\mathbf{v}_{0},\mathbf{v}_{1},\ldots ,\mathbf{v}_{n}$ \\ 
\textbullet
& Initial derivative $\dot{\mathbf{v}}_{0}$%
\end{tabular}%
\end{tabular}
& 
\begin{tabular}[t]{l}
\textbf{2. Initialization:}%
\vspace{0.5ex}
\\ 
\hspace{2ex}%
\begin{tabular}{ll}
\textbullet
& $\boldsymbol{\alpha }_{0}^{\ast }:=\mathbf{v}_{0}$ \\ 
\textbullet
& $\boldsymbol{\beta }_{0}^{\ast }:=\mathbf{v}_{0}^{\prime }$ \\ 
\textbullet
& $T_{0}:=1$%
\end{tabular}%
\end{tabular}
\\ 
\begin{tabular}[t]{l}
\textbf{3. Computation of Spline Coefficients:} \\ 
\textbf{FOR} $i=1,\ldots ,N$ \textbf{DO}%
\vspace{0.5ex}
\\ 
\hspace{2ex}%
\begin{tabular}[t]{ll}
\textbullet
& $T_{i}:=t_{i}-t_{i-1}$ \\ 
\textbullet
& $\delta _{i}:=T_{i}/T_{i-1}$ \\ 
\textbullet
& $\bar{\boldsymbol{\xi }}_{i}:=\log (h_{i-1}^{-1}h_{i})$ \\ 
\textbullet
& $\boldsymbol{\alpha }_{i}^{\ast }:=T_{i}%
\color[rgb]{0.7,0,0}%
\mathbf{v}_{i}%
\color{black}%
$ \\ 
\textbullet
& $\boldsymbol{\alpha }_{i-1}:=T_{i}%
\color[rgb]{0.7,0,0}%
\mathbf{v}_{i-1}%
\color{black}%
$ \\ 
\textbullet
& $\boldsymbol{\beta }_{i-1}:=\delta _{i}^{2}%
\color[rgb]{0.7,0,0}%
\boldsymbol{\beta }_{i}^{\mathbf{\ast }}%
\color{black}%
$ \\ 
\textbullet
& $\mathbf{a}_{i}:=\mathrm{dexp}_{-\bar{\boldsymbol{\xi }}_{i}}^{-1}%
\boldsymbol{\alpha }_{i}^{\ast }$ \\ 
\textbullet
& $\mathbf{b}_{i}:=-12\bar{\boldsymbol{\xi }}_{i}+6\boldsymbol{\alpha }%
_{i-1}+\boldsymbol{\beta }_{i-1}+6\mathbf{a}_{i}$ \\ 
\textbullet
& $\boldsymbol{\beta }_{i}^{\ast }:=%
\color[rgb]{0.7,0,0}%
\mathrm{dexp}_{-\bar{\boldsymbol{\xi }}_{i}}\mathbf{b}_{i}-(\mathrm{D}_{-%
\bar{\boldsymbol{\xi }}_{i}}\mathrm{dexp})\left( \mathbf{a}_{i}\right) 
\mathbf{a}_{i}$%
\end{tabular}%
\vspace{0.5ex}
\\ 
\ \ \ \ \textbf{END}%
\end{tabular}
& 
\begin{tabular}[t]{l}
\textbf{4. Evaluation} at $t\in \left[ t_{i}-t_{i-1}\right] $, with $\tau
_{i}\left( t\right) =\frac{t-t_{i-1}}{T_{i}}$:%
\vspace{0.5ex}
\\ 
\hspace{2ex}%
$%
\begin{tabular}{rl}
$\boldsymbol{\xi }_{i}^{\left[ 4\right] }{}\left( \tau _{i}\right) $%
\hspace{-2.5ex}
& $=\left( 4\tau _{i}^{3}-3\tau _{i}^{4}\right) \bar{\boldsymbol{\xi }}%
_{i}+\left( 1-\tau _{i}\right) ^{2}\left( \tau _{i}+2\tau _{i}^{2}\right) 
\boldsymbol{\alpha }_{i-1}+\frac{1}{2}\tau _{i}^{2}\left( \tau _{i}-1\right)
^{2}\boldsymbol{\beta }_{i-1}+\left( \tau ^{4}-\tau ^{3}\right) \mathbf{a}%
_{i}$ \\ 
$h^{\left[ 4\right] }\left( t\right) $%
\hspace{-2.5ex}
& $=h_{i-1}\exp \boldsymbol{\xi }_{i}^{\left[ 4\right] }{}\left( \tau
_{i}\right) $%
\end{tabular}%
$%
\end{tabular}%
\end{tabular}%
\end{tabular}
\vspace{4ex}

\begin{example}
Interpolation of the spatial rigid body motion through a set of four given
poses $h_{0},\ldots ,h_{3}$ with desired velocities $\mathbf{v}_{0},\ldots ,%
\mathbf{v}_{3}$, within a duration of $T=1$\thinspace s, is considered. The
rigid body poses are represented as elements of the direct product Lie group 
$SO\left( 3\right) \times {\mathbb{R}}^{3}$. This group is compact, in
contrast to $SE\left( 3\right) $, thus its one-parameter subgroups define
geodesics w.r.t. a bi-invariant metric on $SO\left( 3\right) $ and ${\mathbb{%
R}}^{3}$, which are sometimes called double geodesics. This representation
implies that rotations and translations are decoupled. A typical element is $%
h=\left( \mathbf{R},\mathbf{r}\right) $, and the corresponding
left-invariant vector field is $\mathbf{v}=\left( \boldsymbol{\omega },\dot{%
\mathbf{r}}\right) $, where $\boldsymbol{\omega }\in so\left( 3\right) $ is
the body-fixed angular velocity defined via $\tilde{\boldsymbol{\omega }}=%
\mathbf{R}^{T}\dot{\mathbf{R}}\in so\left( 3\right) $, and $\dot{\mathbf{r}}%
\in {\mathbb{R}}^{3}$ is the translational velocity of the body-fixed
reference frame, and $\mathbf{v}$ is called the twist in mixed
representation \cite{MUBOScrew1}. Multiplication on this group is defined as 
$h_{1}h_{2}=\left( \mathbf{R}_{1},\mathbf{r}_{1}\right) \cdot \left( \mathbf{%
R}_{2},\mathbf{r}_{2}\right) =\left( \mathbf{R}_{1}\mathbf{R}_{2},\mathbf{r}%
_{1}+\mathbf{r}_{2}\right) $. The right-trivialized differential on $%
SO\left( 3\right) $ admits the closed form (\ref{dexpSO3}). Pose $h_{i}$ is
represented by a frame $\mathcal{F}_{i}$, as shown in Fig. \ref%
{figSpaceCurve3Vel}. They are defined by position vectors $\mathbf{r}_{i}$
(represented in $\mathcal{F}_{0}$), and the rotation matrices $\mathbf{R}%
_{0i}$ (transforming from $\mathcal{F}_{i}$ to $\mathcal{F}_{0}$) as follows
(all values in SI units)%
\begin{equation*}
\mathbf{r}_{1}=\left[ 
\begin{smallmatrix}
1 \\ 
4 \\ 
1%
\end{smallmatrix}%
\right] ,\mathbf{r}_{2}=\left[ 
\begin{smallmatrix}
4 \\ 
4 \\ 
4%
\end{smallmatrix}%
\right] ,\mathbf{r}_{3}=\left[ 
\begin{smallmatrix}
8 \\ 
4 \\ 
1%
\end{smallmatrix}%
\right] ,\mathbf{R}_{01}=\left[ 
\begin{smallmatrix}
0 & -1 & 0 \\ 
1 & 0 & 0 \\ 
0 & 0 & 1%
\end{smallmatrix}%
\right] ,\mathbf{R}_{02}=\left[ 
\begin{smallmatrix}
0 & 0 & 1 \\ 
1 & 0 & 0 \\ 
0 & 1 & 0%
\end{smallmatrix}%
\right] ,\mathbf{R}_{03}=\mathbf{I}.\vspace{2ex}
\end{equation*}%
The angular velocity is required to be zero at all four points, $\boldsymbol{%
\omega }_{i}=0,i=0,\ldots ,3$. The initial and terminal linear velocity is
zero, $\dot{\mathbf{r}}_{0}=\dot{\mathbf{r}}_{3}=\mathbf{0}$, and at the via
points is prescribed as $\dot{\mathbf{r}}_{1}=\left[ 10,0,0\right] ^{T}$ and 
$\dot{\mathbf{r}}_{2}=\left[ 0,0,10\right] ^{T}$, and the initial velocity
as $\dot{\mathbf{r}}_{0}=\mathbf{0}$. The results obtained with the 3rd- and
4th-order spline are shown in Fig. \ref{figSpaceCurve3Vel}. For the
4th-order spline, the derivative $\mathbf{v}_{0}^{\prime }$ is prescribed
with zero angular acceleration $\dot{\boldsymbol{\omega }}_{0}=\mathbf{0}$
and linear acceleration $\ddot{\mathbf{r}}_{0}=\left[ -10,0,0\right] ^{T}$.
Shown is the moving frame at equidistant samples of $t\in \left[ 0,T\right] $%
. The difference of the 3rd- and 4th-order curve are due to fact that the
4th-order curve respects continuity of the jerk $\mathbf{v}^{\prime \prime }$%
. The special situation when the velocity at all points is set to zero is
shown in Fig. \ref{figSpaceCurve3Vel_Vvec=0}. Then, the 3rd-order spline
interpolation reduces to the concatenation of 3rd-order 2-point
interpolations with matching velocities. When using the 4th-order
interpolations, additionally the accelerations match at the via points,
explaining the apparent difference of the curves. 
%
For comparison, the motion curve constructed with a De Casteljau algorithm
on $SO(3)\times \mathbb{R}^{3}$ is shown in Fig. \ref%
{figSpaceCurveDeCasteljau}. The attitude, i.e. curve in $SO(3)$, is
constructed with algorithm 9 (appendix B), and the position, i.e. curve in
vector space $\mathbb{R}^{3}$, is constructed with the classical De
Casteljau algorithm. It would require a B\'{e}zier-spline algorithm to
construct a motion passing through the two intermediate configurations,
which is still topic of research \cite{Popiel2006,Huber2017}.%
%
\begin{figure}[h]
\centerline{
\hfill
a)\includegraphics[draft=false,height=7.5cm]{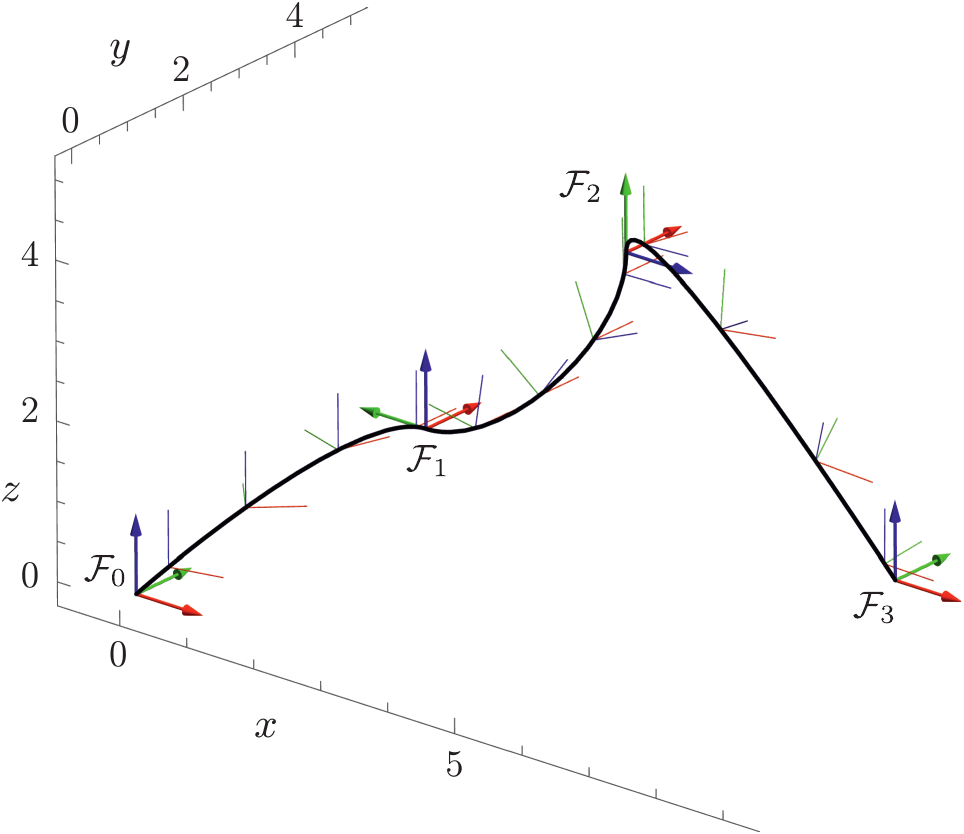}
\hfill
b)\includegraphics[draft=false,height=7.5cm]{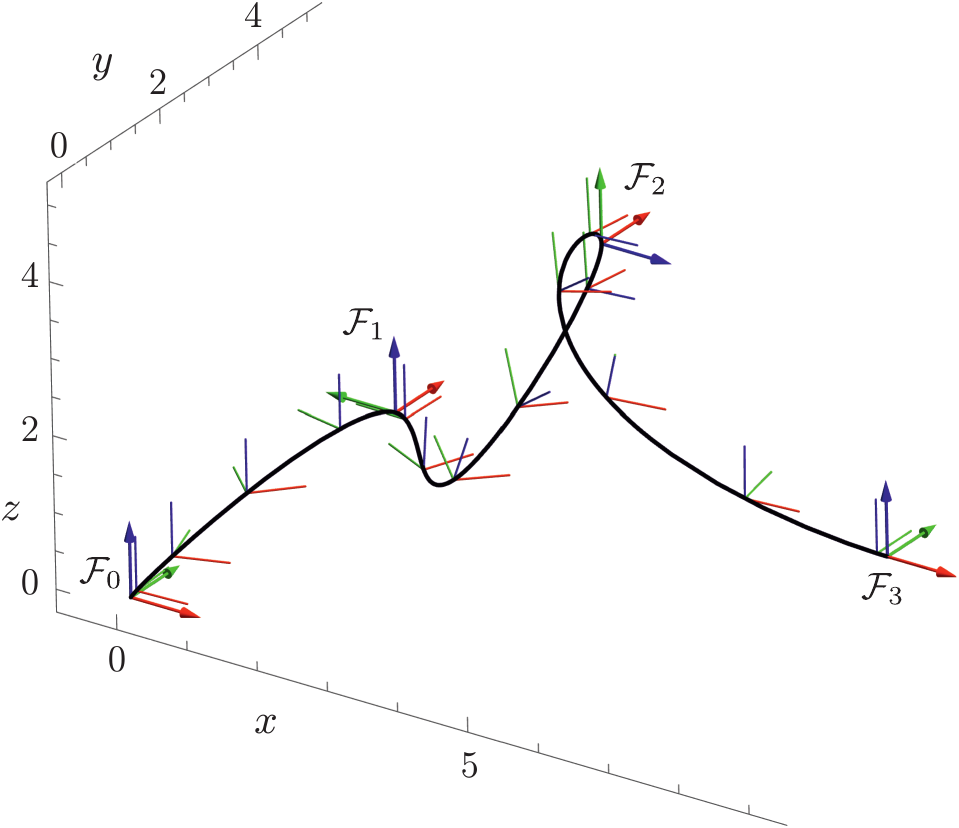}
\hfill\,
}
\caption{%
%
Interpolation of a spatial rigid body motion through four given frame
locations $\mathcal{F}_{0},\ldots ,\mathcal{F}_{3}$, with prescribed
velocities. a) Results when using 3rd-order, and b) when using the 4th-order
spline. $\mathcal{F}_{0}$ is the global reference frame.}
\label{figSpaceCurve3Vel}
\end{figure}
\begin{figure}[h]
\centerline{
\hfill
a)\includegraphics[draft=false,height=5.7cm]{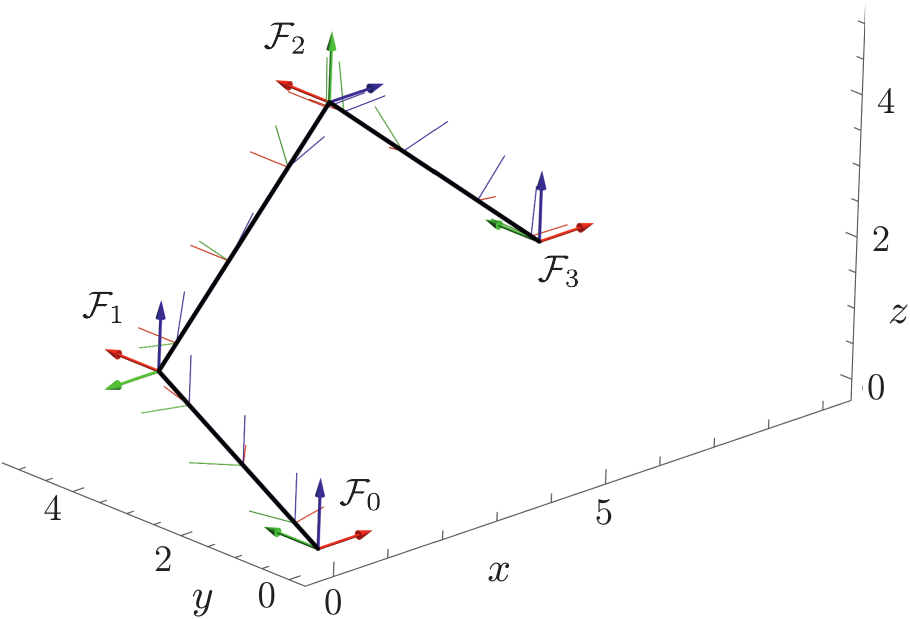}
\hfill
b)\includegraphics[draft=false,height=5.7cm]{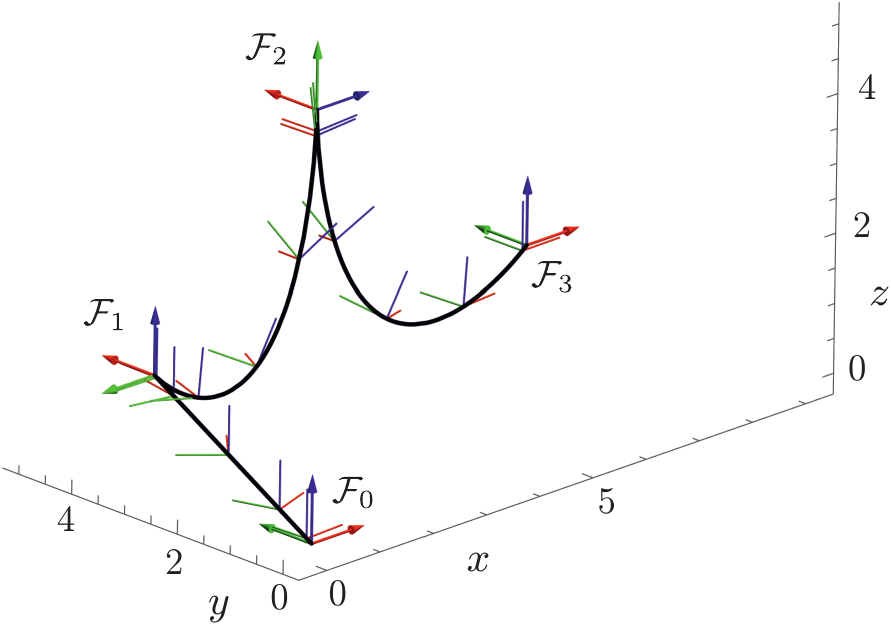}
\hfill\,
}
\caption{%
%
Interpolation of a spatial rigid body motion through four given frame
locations $\mathcal{F}_{0},\ldots ,\mathcal{F}_{3}$ when the velocity is
zero at the given frame locations, using a) 3rd-order, and b) 4th-order
spline. $\mathcal{F}_{0}$ is the global reference frame.}
\label{figSpaceCurve3Vel_Vvec=0}
\end{figure}
\begin{figure}[h]
\centerline{
\includegraphics[draft=false,height=5cm]{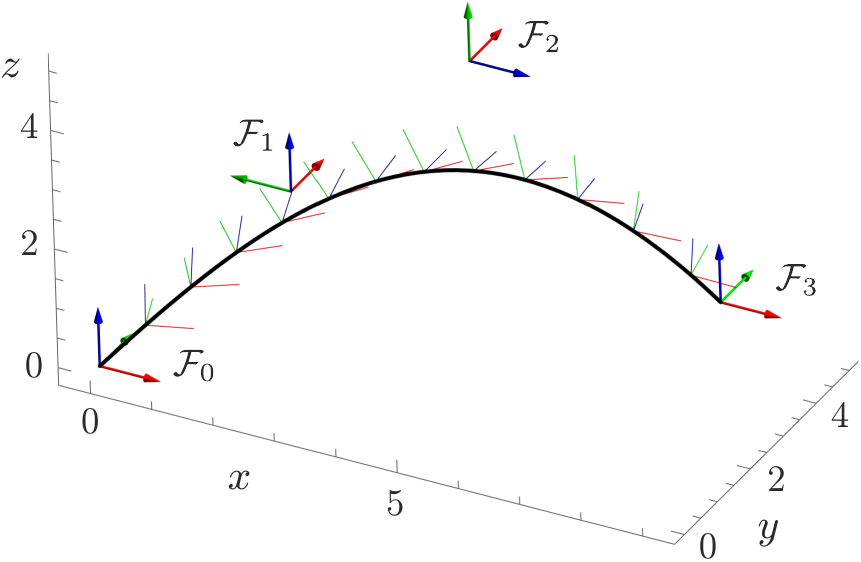}
}
\caption{%
%
B{\'{e}}zier curve interpolating between start and end pose, $\mathcal{F}%
_{0} $ and $\mathcal{F}_{3}$, of the spatial rigid body motion constructed
with the De Casteljau algorithm on $SO(3)\times \mathbb{R}^{3}$ using
intermediate frames $\mathcal{F}_{1}$ and $\mathcal{F}_{2}$ as control
points.}
\label{figSpaceCurveDeCasteljau}
\end{figure}
\end{example}

\newpage

\section{A Limitation of POE-Splines\label{secLimitation}}

A limitation inherent to $k$th-order POE-spline with zero initial local
coordinates $\boldsymbol{\xi }_{i}^{\left[ k\right] }{}\left( 0\right) =%
\mathbf{0}$ is that they cannot exactly reconstruct a $k$th-order curve on $%
G $, i.e. a curve $h\left( t\right) =h_{0}\exp \boldsymbol{\xi }{}\left(
t\right) $ where $\boldsymbol{\xi }{}\left( t\right) $ is a $k$th-order
polynomial in $t$. Assume that the curve is to be interpolated through $%
h_{i}=h\left( t_{i}\right) ,i=0,\ldots ,n$. In interval $t\in \left[
t_{i-1},t_{i}\right] $, the curve can be expressed as $h\left( t\right)
=h_{0}\exp \boldsymbol{\xi }{}\left( t\right) =h_{0}\exp \boldsymbol{\xi }%
{}\left( t_{i-1}\right) \exp \boldsymbol{\eta {}}\left( t\right)
=h_{i-1}\exp \boldsymbol{\eta {}}\left( t\right) $. The coordinates $%
\boldsymbol{\xi }{}\left( t\right) $ and $\boldsymbol{\eta {}}\left(
t\right) $ are related by the BCH formula as%
\begin{eqnarray}
\boldsymbol{\eta {}}\left( t\right) &=&\boldsymbol{\xi }{}\left( t\right) -%
\boldsymbol{\xi }{}\left( t_{i-1}\right) -\frac{1}{2}\left[ \boldsymbol{\xi }%
{}\left( t_{i-1}\right) ,\boldsymbol{\xi }{}\left( t\right) \right]
\label{Limit} \\
&&+\frac{1}{12}\left[ \boldsymbol{\xi }{}\left( t_{i-1}\right) ,\left[ 
\boldsymbol{\xi }{}\left( t_{i-1}\right) ,\boldsymbol{\xi }{}\left( t\right) %
\right] \right] -\frac{1}{12}\left[ \boldsymbol{\xi }{}\left( t\right) ,%
\left[ \boldsymbol{\xi }{}\left( t\right) ,\boldsymbol{\xi }{}\left(
t_{i-1}\right) \right] \right] +\ldots .  \notag
\end{eqnarray}%
If $\boldsymbol{\xi }{}\left( t\right) $ is a $k$th-order polynomial in $t$,
then $\boldsymbol{\eta }\left( t\right) $ is generally not a $k$th-order
polynomial, which requires all nested Lie brackets in (\ref{Limit}) to
vanish. In case of standard spline interpolation on vector spaces, and on
general Abelian groups, this condition is satisfied, of course.

\begin{proposition}
\label{propPOELimit}A $k$th-order POE-spline can only exactly reconstruct a
curve of degree $k$ defined on a 1-parameter subgroup of $G$.
\end{proposition}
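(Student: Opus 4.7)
The plan is to argue both directions from the BCH identity in (\ref{Limit}), using Lemma \ref{lemExactRec} for the local argument. Fix a segment $t\in[t_{i-1},t_i]$ and recall that the POE-spline ansatz requires the local coordinates $\boldsymbol{\eta}(t):=\log(h_{i-1}^{-1}h(t))$ of the target curve to coincide with a polynomial $\boldsymbol{\xi}_i^{[k]}(\tau_i(t))$ of degree $k$ in $t$ (since $\tau_i$ is affine in $t$).

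For the sufficiency direction, suppose $h(t)=h_0\exp(p(t)\boldsymbol{\xi})$ lies on a one-parameter subgroup generated by a fixed $\boldsymbol{\xi}\in\mathfrak{g}$, with $p$ a polynomial of degree $k$. Then $\boldsymbol{\xi}(t_{i-1})=p(t_{i-1})\boldsymbol{\xi}$ and $\boldsymbol{\xi}(t)=p(t)\boldsymbol{\xi}$ commute, so every nested bracket in (\ref{Limit}) vanishes, giving the closed form $\boldsymbol{\eta}(t)=(p(t)-p(t_{i-1}))\boldsymbol{\xi}$, a polynomial of degree $k$. Applying Lemma \ref{lemExactRec} in each segment (with the shifted polynomial $p-p(t_{i-1})$ playing the role of $p$ in the lemma) yields exact reconstruction by the 2-point interpolation formula (\ref{2PointInt}), and the matching of boundary values at interior knots then glues the pieces into the full POE-spline.

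For the necessity direction, assume the POE-spline reconstructs a $k$th-order curve $h(t)=h_0\exp\boldsymbol{\xi}(t)$ exactly. Then $\boldsymbol{\eta}(t)$ must be a polynomial of degree at most $k$ in $t$ on every segment. From (\ref{Limit}) one has
\[
\boldsymbol{\eta}(t)=\boldsymbol{\xi}(t)-\boldsymbol{\xi}(t_{i-1})-\tfrac{1}{2}[\boldsymbol{\xi}(t_{i-1}),\boldsymbol{\xi}(t)]+\sum_{m\ge 3}B_m\bigl(\boldsymbol{\xi}(t_{i-1}),\boldsymbol{\xi}(t)\bigr),
\]
where $B_m$ denotes the degree-$m$ BCH term, an iterated Lie polynomial in its two arguments. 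The first two terms already form a polynomial of degree $k$, so the remaining BCH contributions must vanish identically in $t$. I would argue that this forces $[\boldsymbol{\xi}(t_{i-1}),\boldsymbol{\xi}(t)]\equiv 0$ on the segment: the leading BCH correction $-\tfrac{1}{2}[\boldsymbol{\xi}(t_{i-1}),\boldsymbol{\xi}(t)]$ is itself a polynomial of degree $k$ in $t$ (with $\boldsymbol{\xi}(t_{i-1})$ a fixed element), whereas every $B_m$ for $m\ge 3$ is a Lie polynomial of higher multilinear degree in $\boldsymbol{\xi}(t)$ and hence produces strictly higher-degree polynomial contributions in $t$ that cannot cancel the leading bracket. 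Varying the interior point $t_{i-1}$ across the segment then gives $[\boldsymbol{\xi}(s),\boldsymbol{\xi}(t)]=0$ for all $s,t$, so the image of $\boldsymbol{\xi}$ lies in an Abelian subalgebra generated by a single direction, i.e., $\boldsymbol{\xi}(t)=p(t)\boldsymbol{\xi}_\ast$ for a constant $\boldsymbol{\xi}_\ast\in\mathfrak{g}$ and a scalar polynomial $p$ of degree $k$.

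The main obstacle is precisely this last step: rigorously excluding accidental cancellations between different-order terms of the BCH series. The cleanest way I see is to exploit bidegree in the bidisk formed by the coefficients of $\boldsymbol{\xi}(t)$. Writing $\boldsymbol{\xi}(t)=\sum_{j=0}^k t^j\mathbf{c}_j$ and $\boldsymbol{\xi}(t_{i-1})=\sum_j t_{i-1}^j\mathbf{c}_j$, each $B_m$ is a sum of iterated brackets of the $\mathbf{c}_j$'s with independent monomials in $t$ and $t_{i-1}$; since the Hall (or Dynkin) basis provides a linearly independent set of iterated brackets, the vanishing of $\sum_{m\ge 2}B_m$ as a polynomial identity in $t$ (for all admissible $t_{i-1}$) forces every bracket coefficient of the $\mathbf{c}_j$'s to vanish, hence $[\mathbf{c}_j,\mathbf{c}_l]=0$ for all $j,l$. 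This yields $\mathbf{c}_j=\alpha_j\boldsymbol{\xi}_\ast$ up to a common direction, completing the characterization.
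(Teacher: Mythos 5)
Your sufficiency argument coincides with the paper's: on a 1-parameter subgroup all brackets in (\ref{Limit}) vanish, so $\boldsymbol{\eta}(t)=(p(t)-p(t_{i-1}))\boldsymbol{\xi}$ is again a degree-$k$ polynomial, and Lemma \ref{lemExactRec} applied segmentwise (with the propagated initial data matching the true ones by induction) gives exactness. The paper essentially stops there: its justification of the necessity half is the informal remark that a non-vanishing bracket in (\ref{Limit}) destroys polynomiality of $\boldsymbol{\eta}$, and it explicitly concedes in the preceding sentence that on Abelian groups the brackets vanish without the curve lying on a 1-parameter subgroup. Your attempt to upgrade this remark to a rigorous converse is more ambitious than what the paper does, but it has three genuine gaps.

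First, the degree count is wrong: the order-$m$ BCH terms are not of ``strictly higher degree in $t$'' than the leading bracket. Terms such as $\mathrm{ad}_{\boldsymbol{\xi}(t_{i-1})}^{m-1}\boldsymbol{\xi}(t)$ are linear in $\boldsymbol{\xi}(t)$ and hence polynomials of degree at most $k$ in $t$ for every $m$, so cancellations across BCH orders cannot be excluded by comparing $t$-degrees. Second, the Hall/Dynkin independence you invoke holds in the free Lie algebra on the symbols $\mathbf{c}_j$, not in the concrete finite-dimensional $\mathfrak{g}$, where the iterated brackets of the $\mathbf{c}_j$ satisfy many linear relations; vanishing of the sum therefore does not force each bracket coefficient to vanish. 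Third, even granting $[\mathbf{c}_j,\mathbf{c}_l]=0$ for all $j,l$, the conclusion $\mathbf{c}_j=\alpha_j\boldsymbol{\xi}_\ast$ is false in general: commuting elements of a Lie algebra need not be collinear (take the translational part of $se(3)$, or any Abelian subalgebra of dimension at least $2$). This last point is not a repairable detail; it shows the literal converse of the proposition fails on Abelian subgroups, which is exactly the caveat the paper itself makes. The statement should therefore be read, as the paper intends, as identifying the vanishing of all nested brackets in (\ref{Limit}) as the only mechanism permitting exact reconstruction, not as forcing a single generator; your plan establishes the former only modulo the first two gaps, and cannot establish the latter because it is not true.
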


This limitation has not been addressed in the literature. It is particularly
relevant when a curve in $G$ is to be interpolated, rather than to be
generated from given via point. Besides this issue, another limitation is
that the initial pose $h_{0}$ must be known beforehand, rather then being
the outcome of the interpolation.

\begin{example}
\label{exaLimit}To demonstrate proposition \ref{propPOELimit}, and thus
lemma \ref{lemExactRec}, the interpolation of a spatial rotation in $%
SO\left( 3\right) $ is considered. The curve to be interpolated in $SO(3)$
is expressed as $h\left( t\right) =\exp (\boldsymbol{\xi }\left( t\right) )$%
, where $h\left( t\right) \in SO\left( 3\right) $ is the rotation matrix,
and $\boldsymbol{\xi }\left( t\right) $ the instantaneous rotation axis
times angle, and the exponential map is the Euler-Rodrigues formula (\ref%
{expSO3}). First consider a motion in the 1-parameter subgroup of rations
about the constant (for simplicity non-unit) axis $\boldsymbol{\xi }_{0}=%
\left[ 0.5,1.5,1\right] \in \mathbb{R}^{3}$. The particular motion to be
interpolated is described by $\boldsymbol{\xi }\left( t\right)
=(t^{3}-t^{2}+3t)\boldsymbol{\xi }_{0}$, which is cubic in $t$. As input to
the spline interpolation, $n=11$ equidistant samples $h_{0},\ldots
,h_{10}\in SO\left( 3\right) $ are computed along the curve 
%
at $\tau _{i},i=0,\ldots ,1$ 
%
giving rise to a 10-segment spline. The 3rd-order cubic POE-spline with
given initial values (Algorithm 1) and with given velocities at the sample
points (Algorithm 3) are applied. For the latter, also the corresponding
angular velocities $\mathbf{v}_{0},\ldots ,\mathbf{v}_{10}\in so\left(
3\right) $ are computed 
%
as $\mathbf{v}_{i}=h^{-1}\left( \tau _{i}\right) \dot{h}\left( \tau
_{i}\right) $.%
%
\newline
The interpolation error is computed as $\varepsilon =\left\Vert \log \left(
\Delta h\right) \right\Vert $, with $\Delta h=h^{-1}\left( t\right) h^{\left[
3\right] }\left( t\right) $, where $h^{\left[ 3\right] }\left( t\right)
=\exp (\boldsymbol{\xi }^{\left[ 3\right] }\left( t\right) )$ is the curve
obtained with the 3rd-order interpolation. Fig. \ref{figOneParam_3rd}a)
shows the result obtained with the two 3rd-order interpolations. Clearly,
this 3rd-order motion can be reconstructed (numerically) exactly by the
3rd-order spline. Now consider the spatial rotation described by $%
\boldsymbol{\xi }\left( t\right) =t\boldsymbol{\xi }_{1}+t^{3}\boldsymbol{%
\xi }_{2}$, with $\boldsymbol{\xi }_{1}=\left[ 0.1,0,0.2\right] \in \mathbb{R%
}^{3}$ and $\boldsymbol{\xi }_{2}=\left[ 0,1.5,0\right] \in \mathbb{R}^{3}$.
It is obvious from Fig. \ref{figOneParam_3rd}b) that the 3rd-order spline
cannot reconstruct this cubic motion. 
%
Also the De Casteljau algorithm 9 (appendix B) is applied for comparison.
For both motions the error magnitude is about 0.06 rad. It is therefore not
shown in the figures.
\end{example}

\begin{figure}[h]
\hfill a)%
\includegraphics[draft=false,height=5cm]{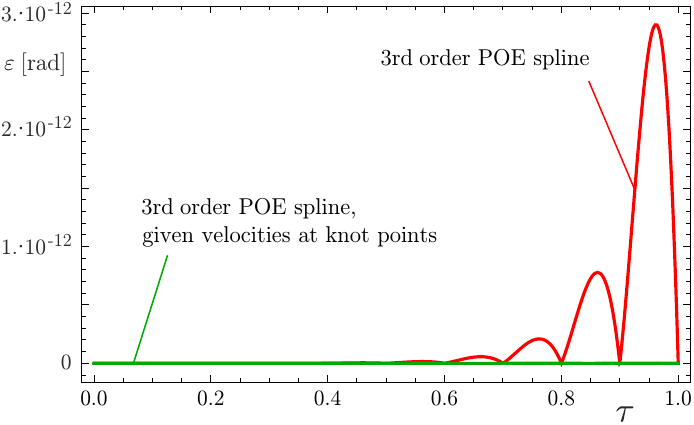}
\hfill b)%
\includegraphics[draft=false,height=5cm]{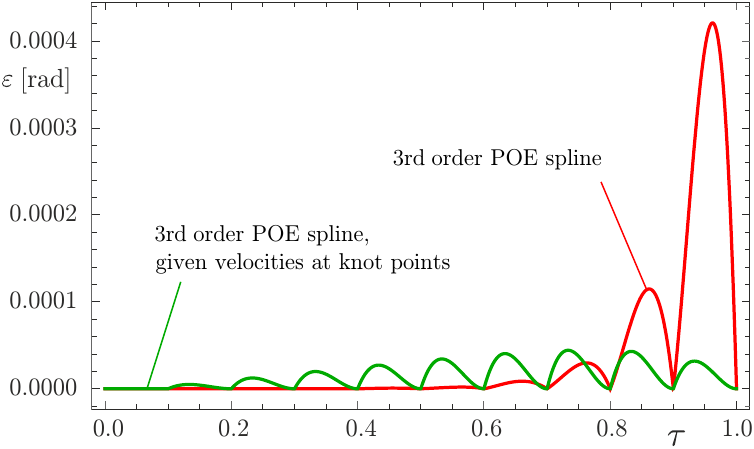}
\hfill\ 
\caption{Error when interpolating a) a cubic 1-parameter motion in $SO(3)$
and b) a general cubic motion in $SO(3)$ with a 3rd-order POE spline.}
\label{figOneParam_3rd}
\end{figure}
%

\section{2-Point Interpolation with Non-Zero Initial Value $\protect\xi %
\left( 0\right) \neq 0$\label{sec2PointNonZero}}

If $\boldsymbol{\xi }{}\left( 0\right) $ is non-zero, the series solution (%
\ref{xi}) does not apply. The following generalized the series solution,
with (\ref{xi}) as special case.

\begin{theorem}
\label{theoApprox}i) The solution of the left-invariant normalized Poisson
equation (\ref{Poisson}) can locally be expressed as $g\left( \tau \right)
=g_{0}\exp \boldsymbol{\xi }\left( \tau \right) $, where $\boldsymbol{\xi }%
\left( \tau \right) $ satisfies the ODE (\ref{locRec}) with $\boldsymbol{\xi 
}\left( 0\right) \neq \mathbf{0}$ small. ii) At $\tau =0$, the solution
admits the series expansion 
\begin{equation}
\boldsymbol{\xi }\left( \tau \right) =\sum_{k\geq 0}\tfrac{\tau ^{k}}{k!}%
\boldsymbol{\Phi }_{k}(\boldsymbol{\xi }\left( 0\right) ,\mathbf{v}\left(
0\right) ,\mathbf{v}^{\prime }\left( 0\right) ,\ldots ,\mathbf{v}^{\left(
k-1\right) }\left( 0\right) )  \label{Xt}
\end{equation}%
with $\boldsymbol{\Phi }_{0}=\boldsymbol{\xi }_{0}$, and $\boldsymbol{\Phi }%
_{k}(\boldsymbol{\xi }\left( \tau \right) ,\mathbf{v}\left( \tau \right) ,%
\mathbf{v}^{\prime }\left( \tau \right) ,\ldots ,\mathbf{v}^{\left(
s-1\right) }\left( \tau \right) )$ in (\ref{Phik}). iii) If the initial
value is zero, $\boldsymbol{\xi }\left( 0\right) =\mathbf{0}$, the
coefficients are given in (\ref{Phik3}), i.e. $\boldsymbol{\Phi }_{k}=%
\boldsymbol{\Phi }_{k}^{0},k\geq 1$ and $\boldsymbol{\Phi }_{k}=\mathbf{0}$.
\end{theorem}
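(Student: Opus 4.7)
The plan is to establish the three parts in sequence, leveraging standard Lie group calculus and the series (\ref{dexpInv}).

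For part (i), I would first invoke the fact that $\exp$ is a local diffeomorphism from a neighborhood of $\mathbf{0}\in\mathfrak{g}$ onto a neighborhood of $e\in G$. Provided $\boldsymbol{\xi}(0)$ lies in this neighborhood (i.e., avoids the singular locus of $\mathbf{dexp}$), the curve $g_0^{-1}g(\tau)$ admits a unique smooth logarithm $\boldsymbol{\xi}(\tau)$ for $\tau$ in some interval containing $0$, which yields $g(\tau)=g_0\exp\boldsymbol{\xi}(\tau)$. Differentiating and using the defining identity $\frac{d}{d\tau}\exp\boldsymbol{\xi}=\exp\boldsymbol{\xi}\cdot\mathbf{dexp}_{-\boldsymbol{\xi}}\boldsymbol{\xi}'$ gives $g^{-1}g'=\mathbf{dexp}_{-\boldsymbol{\xi}}\boldsymbol{\xi}'$, which upon equating with $\mathbf{v}$ and inverting $\mathbf{dexp}_{-\boldsymbol{\xi}}$ produces (\ref{locRec}). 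Crucially, nowhere does this argument require $\boldsymbol{\xi}(0)=\mathbf{0}$.

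For part (ii), the plan is to identify the Taylor coefficients $\boldsymbol{\xi}^{(k)}(0)$ directly. The case $k=0$ is trivial since $\boldsymbol{\xi}(0)=\boldsymbol{\Phi}_{0}$ by definition. For $k\geq 1$, I would differentiate (\ref{locRec}) exactly $k-1$ times in $\tau$ and apply the Leibniz rule to the product $\mathbf{dexp}_{-\boldsymbol{\xi}(\tau)}^{-1}\mathbf{v}(\tau)$, obtaining
\begin{equation*}
\boldsymbol{\xi}^{(k)}(\tau)=\sum_{i=1}^{k}\binom{k-1}{i-1}\frac{d^{k-i}}{d\tau^{k-i}}\mathbf{dexp}_{-\boldsymbol{\xi}(\tau)}^{-1}\cdot\mathbf{v}^{(i-1)}(\tau),
\end{equation*}
which is exactly $\boldsymbol{\Phi}_{k}(\boldsymbol{\xi}(\tau),\mathbf{v}(\tau),\ldots,\mathbf{v}^{(k-1)}(\tau))$ as defined by (\ref{Phik})--(\ref{Phik2}). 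Evaluating at $\tau=0$ and substituting into the Taylor series $\boldsymbol{\xi}(\tau)=\sum_{k\geq 0}\tfrac{\tau^{k}}{k!}\boldsymbol{\xi}^{(k)}(0)$ yields (\ref{Xt}). Induction on $k$ controls the fact that each $\boldsymbol{\xi}^{(j)}(0)$ appearing internally after applying the chain rule is itself expressible in the required data, so $\boldsymbol{\Phi}_{k}$ depends only on $\boldsymbol{\xi}(0)$ and the first $k-1$ derivatives of $\mathbf{v}$ at $0$.

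For part (iii), I would substitute $\boldsymbol{\xi}(0)=\mathbf{0}$ into the expressions from (ii) and expand $\mathbf{dexp}_{-\boldsymbol{\xi}}^{-1}$ via the Bernoulli series (\ref{dexpInv}). The chain-rule derivatives of $\mathrm{ad}_{\boldsymbol{\xi}(\tau)}^{\ell}$ evaluated at $\tau=0$ survive only when every $\boldsymbol{\xi}(0)$-factor is struck by at least one derivative, which reduces each derivative to a nested commutator of lower-order $\boldsymbol{\xi}^{(j)}(0)$'s that are, by induction, polynomials in $\mathbf{v}_{0}$ and its derivatives. Collecting the coefficients and matching indices reproduces the sum $\mathbf{F}_{kj}$ of (\ref{Fk}) and the recursion (\ref{Phik3}) with $\boldsymbol{\Phi}_{0}^{0}=\mathbf{0}$, which is the formulation already derived in \cite{ZAMM2010}. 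The main obstacle is the combinatorial bookkeeping: showing that the Bernoulli-weighted nested $\mathrm{ad}$-terms arising from repeated chain rules regroup precisely into $\mathbf{F}_{kj}$ requires tracking multinomial index partitions $i_{1}+\cdots+i_{l}+j=k$ and matching the alternating signs from $\mathbf{dexp}_{-\boldsymbol{\xi}}^{-1}$ (the factor $(-1)^{l}$ in (\ref{Fk})). Once this bookkeeping is carried out for the first few orders and the general pattern is verified inductively, the reduction to (\ref{Phik3}) is complete.
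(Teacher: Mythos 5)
Your proposal is correct and follows essentially the same route as the paper: part (i) via the local diffeomorphism property of $\exp$ and the identity $g^{-1}g'=\mathbf{dexp}_{-\boldsymbol{\xi}}\boldsymbol{\xi}'$, and part (ii) via the Leibniz rule applied to $\mathbf{dexp}_{-\boldsymbol{\xi}(\tau)}^{-1}\mathbf{v}(\tau)$ to identify the Taylor coefficients with (\ref{Phik})--(\ref{Phik2}), which is the same computation the paper performs by inserting the series and comparing powers of $\tau$. For part (iii) the paper simply defers to \cite{ZAMM2010}, so your (admittedly uncompleted) combinatorial sketch of the reduction to (\ref{Phik3}) goes beyond what the paper itself supplies and is not required.
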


\begin{proof}
i) With $g\left( \tau \right) =g_{0}\exp \boldsymbol{\xi }\left( \tau
\right) $, the left Poisson equation (\ref{Poisson}) becomes $\mathbf{v}%
\left( \tau \right) =\exp (-\boldsymbol{\xi }\left( \tau \right) )\frac{%
\mathrm{d}}{\mathrm{d}\tau }\exp (-\boldsymbol{\xi }\left( \tau \right) )=%
\mathbf{dexp}_{-\boldsymbol{\xi }\left( \tau \right) }\boldsymbol{\xi }%
^{\prime }\left( \tau \right) $. This relation is independent of whether $%
\boldsymbol{\xi }\left( 0\right) =0$. The dexp map exists for $\boldsymbol{%
\xi }\in \mathfrak{g}$ small. ii) Inserting the series expansion of $%
\boldsymbol{\xi }^{\prime }\left( \tau \right) $ and $\mathbf{dexp}_{-%
\boldsymbol{\xi }\left( \tau \right) }^{-1}\mathbf{v}\left( \tau \right) $
at $\tau =0$ in (\ref{locRec}) yields $\sum\limits_{j\geq 0}\tfrac{\tau j}{j!%
}\boldsymbol{\Phi }_{j+1}=\mathbf{dexp}_{-\boldsymbol{\xi }\left( 0\right)
}^{-1}\mathbf{v}\left( 0\right) +\sum\limits_{j\geq 1}\left. \tfrac{\tau ^{j}%
}{j!}\frac{\mathrm{d}^{j}}{\mathrm{d}\tau ^{j}}\left( \mathbf{dexp}_{-%
\boldsymbol{\xi }\left( \tau \right) }^{-1}\mathbf{v}\left( \tau \right)
\right) \right\vert _{\tau =0}$. Application of Leibnitz rule to the last
term, and comparing coefficients for powers of $\tau $ yields (\ref{Phik})
and (\ref{Phik2}). The $\mathbf{dexp}$ map being analytic in a neighborhood
of the the identity ensures convergence. Convergence proofs can be found in 
\cite{Magnus1954,Varadarajan1984}. iii) This was proven in \cite{ZAMM2010}.
\end{proof}

\begin{remark}
The condition that $\boldsymbol{\xi }\left( 0\right) \neq \mathbf{0}$ is
small ensures that $\exp \boldsymbol{\xi }\left( \tau \right) $ remains in a
neighborhood of the identity in $G$. In practical applications this
requirement is lifted, and parameter singularities are tackled using the
limit values of the respective exp map.
\end{remark}

\begin{remark}
It is instructive to consider case iii) as the limit of $\boldsymbol{\Phi }%
_{k}$ in (\ref{Phik}) for $\boldsymbol{\xi }\rightarrow \mathbf{0}$. To this
end, the series expansion (\ref{dexpInv}) is used. The series expansions of
the first and second derivatives are, for instance,%
\begin{align}
(\mathrm{D}_{\boldsymbol{\xi }}\mathrm{dexp}^{-1})(\boldsymbol{\eta })&
=\sum_{i\geq 1}\frac{B_{i}}{i!}\sum_{\substack{ j,l\geq 0  \\ j+l+1=i}}%
\mathrm{ad}_{\boldsymbol{\xi }}^{j}\mathrm{ad}_{\boldsymbol{\eta }}\mathrm{ad%
}_{\boldsymbol{\xi }}^{l}  \label{derSeries1} \\
(\mathrm{D}_{\boldsymbol{\xi }}^{2}\mathrm{dexp}^{-1})(\boldsymbol{\eta })(%
\boldsymbol{\omega })& =\sum_{i\geq 2}\frac{B_{i}}{i!}\sum_{\substack{ %
j,l\geq 0  \\ j+l+2=i}}^{i-1}\left( \sum_{\substack{ r,s\geq 0  \\ r+s=j}}%
^{i-1}\mathrm{ad}_{\boldsymbol{\xi }}^{r}\mathrm{ad}_{\boldsymbol{\omega }}%
\mathrm{ad}_{\boldsymbol{\xi }}^{s}\mathrm{ad}_{\boldsymbol{\eta }}\mathrm{ad%
}_{\boldsymbol{\xi }}^{l}+\sum_{\substack{ r,s\geq 0  \\ r+s=l}}^{i-1}%
\mathrm{ad}_{\boldsymbol{\xi }}^{j}\mathrm{ad}_{\boldsymbol{\eta }}\mathrm{ad%
}_{\boldsymbol{\xi }}^{r}\mathrm{ad}_{\boldsymbol{\omega }}\mathrm{ad}_{%
\boldsymbol{\xi }}^{s}\right)  \label{derSeries2}
\end{align}%
with $\boldsymbol{\xi },\boldsymbol{\eta },\boldsymbol{\omega }\in \mathfrak{%
g}$. The third derivative vanishes identically since $B_{3}=0$. Therewith,
the limits at $\boldsymbol{\xi }\left( 0\right) =0$ are found as%
\begin{equation}
(\mathrm{D}_{0}\mathrm{dexp}^{-1})(\boldsymbol{\eta })=-\frac{1}{2}\mathrm{ad%
}_{\boldsymbol{\eta }},\ (\mathrm{D}_{0}^{2}\mathrm{dexp}^{-1})(\boldsymbol{%
\eta })(\boldsymbol{\omega })=\frac{1}{12}(\mathrm{ad}_{\boldsymbol{\omega }}%
\mathrm{ad}_{\boldsymbol{\eta }}+\mathrm{ad}_{\boldsymbol{\eta }}\mathrm{ad}%
_{\boldsymbol{\omega }}).  \label{coeff}
\end{equation}%
The coefficients (\ref{PhiVal}) are then obtained from (\ref{coeff}).
\end{remark}

Theorem \ref{theoApprox} gives rise to $k$th-order 2-point interpolation
formulae with $\boldsymbol{\xi }{}\left( 0\right) \neq \mathbf{0}$. The
derivation is analogous to that in section 3
. Denote $\Delta \boldsymbol{\xi }:=\bar{\boldsymbol{\xi }}-\boldsymbol{\xi }%
_{0}$, the initial value interpolation (\ref{2PointInt}) becomes%
\begin{equation}
\boldsymbol{\xi }^{\left[ k\right] }\left( \tau \right) =\;\boldsymbol{\xi }%
_{0}+\tau ^{k}\Delta \boldsymbol{\xi }-\sum_{j=1}^{k-1}\frac{1}{j!}(\tau
^{j}-\tau ^{k})\mathbf{v}_{0}^{\left( j-1\right) }+\sum_{r=3}^{k}(\tau
^{r}-\tau ^{k})\mathbf{a}_{r}
\end{equation}%
and in particular, the 3rd- and 4th-order interpolation formulae are%
\begin{eqnarray}
\boldsymbol{\xi }^{\left[ 3\right] }\left( \tau \right) &=&\boldsymbol{\xi }%
_{0}+\tau ^{3}\Delta \boldsymbol{\xi }+\left( \tau -\tau ^{3}\right) \mathbf{%
v}_{0}+\frac{1}{2}\left( \tau ^{2}-\tau ^{3}\right) \mathbf{v}_{0}^{\prime }
\\
\boldsymbol{\xi }^{\left[ 4\right] }\left( \tau \right) &=&\boldsymbol{\xi }%
_{0}+\tau ^{4}\Delta \boldsymbol{\xi }+\left( \tau -\tau ^{4}\right) \mathbf{%
v}_{0}+\frac{1}{2}\left( \tau ^{2}-\tau ^{4}\right) \mathbf{v}_{0}^{\prime }+%
\frac{1}{6}\left( \tau ^{3}-\tau ^{4}\right) (\mathbf{v}_{0}^{\prime \prime
}+\frac{1}{2}\left[ \mathbf{v}_{0}^{\prime },\mathbf{v}_{0}\right] ).  \notag
\end{eqnarray}%
The 3rd- and 4th-order boundary value interpolation, (\ref{3PPB}) and (\ref%
{4PPB}) respectively, become%
\begin{eqnarray}
%
\boldsymbol{\xi }^{\left[ 3\right] }\left( \tau \right) &=&%
%
\boldsymbol{\xi }_{0}+\left( 3\tau ^{2}-2\tau ^{3}\right) \Delta \boldsymbol{%
\xi }+\tau \left( \tau -1\right) ^{2}\mathbf{v}_{0}+\left( \tau ^{3}-\tau
^{2}\right) \mathrm{dexp}_{-\bar{\boldsymbol{\xi }}}^{-1}\mathbf{v}_{1} \\
%
\boldsymbol{\xi }^{\left[ 4\right] }\left( \tau \right) &=&%
%
\boldsymbol{\xi }_{0}+\left( 4\tau ^{3}-3\tau ^{4}\right) \Delta \boldsymbol{%
\xi }+\tau \left( \tau -\tau ^{2}\right) \left( 1+2\tau \right) \mathbf{v}%
_{0}+\left( \tau ^{4}-\tau ^{3}\right) \mathrm{dexp}_{-\bar{\boldsymbol{\xi }%
}}^{-1}\mathbf{v}_{1}+\frac{1}{2}\tau ^{2}\left( 1-\tau \right) ^{2}\mathbf{v%
}_{0}^{\prime }.
\end{eqnarray}

\section{Globally Parameterized $k$th-Order Continuos Splines}

Instead of concatenating local interpolations by group multiplication, which
leads to the POE-spline, the interpolation is defined by a single
exponential of a curve in $\mathfrak{g}$, represented by $\boldsymbol{\xi }%
{}\left( t\right) $ defined on the entire range $t\in \left[ 0,T\right] $ of
the global path parameter.

\begin{definition}
A $k$th-order $C^{p}$-continuos global spline interpolating through $%
h_{0},h_{1},\ldots ,h_{n}\in G$ is a map%
\begin{equation}
\bar{h}\left( t\right) =\bar{h}_{0}\exp \boldsymbol{\xi }\left( t\right) ,\
t\in \left[ 0,T\right]  \label{hGlob2}
\end{equation}%
satisfying the matching condition $h\left( t_{i}\right) =h_{i}$ and the
initial conditions $\bar{\mathbf{v}}^{\left( s\right) }\left( 0\right) =%
\mathbf{v}_{0}^{\left( s\right) },s=0,\ldots ,p-1$, where the coordinate
function $\boldsymbol{\xi }\left( t\right) $ and $\bar{\mathbf{v}}^{\left(
s\right) }\left( t\right) ,s=0,\ldots ,p-1$ are continuos, with
left-invariant vector field defined as $\bar{\mathbf{v}}\left( t\right) :=%
\bar{h}\left( t\right) ^{-1}\bar{h}^{\prime }\left( t\right) $.
\end{definition}

This yields a \emph{global interpolation} in terms of local coordinates $%
\boldsymbol{\xi }{}\left( t\right) $ on $G$. The canonical coordinates will
still be interpolated separately for each spline segment, but the
propagation through the segment is described as $\boldsymbol{\xi }{}\left(
t\right) =\boldsymbol{\xi }{}\left( t_{i-1}\right) +\boldsymbol{\xi }{}_{i}^{%
\left[ k\right] }\left( \tau _{i}\right) ,t\in \left[ t_{i-1},t_{i}\right] $%
, which does not suffer from the issue discussed in section 5 
as the concatenation is done on the Lie algebra (a vector space). Notice
that global refers to fact that local coordinates $\boldsymbol{\xi }$ are
used to parameterize the complete curve in $G$. Such global interpolations
were addressed e.g. in \cite{HanBauchau2018} for rigid body motions. Since $%
\boldsymbol{\xi }{}\left( 0\right) $ is no longer required to be zero, in
general $\bar{h}_{0}\neq h_{0}$. Nevertheless, $\boldsymbol{\xi }{}\left(
0\right) =\mathbf{0}$ is an obvious choice.

\subsection{A $k$th-Order $C^{k-1}$ Continuos Spline with given Initial
Values}

The key to this spline formulation is the 2-point interpolation in section 6 
. The POE-formulation is readily amended leading to Algorithms 5 and 6. The
canonical coordinates at the end of segment $i$ are now obtained from $h_{0}$
and $h_{i}$ as $\bar{\boldsymbol{\xi }}_{i}:=\log (h_{0}^{-1}h_{i})$, rather
than from the relative local increment $h_{0}^{-1}h_{i}$ as in case of POE
splines.

\begin{remark}
The uniqueness of the $\log $ map on the Lie group $G$ must be taken into
account in numerical computations when the 'distance' of the interpolation
points $h_{i}$ becomes large. In case of $SO(3)$, and thus $SE(3)$, the $%
\log $ map only has a (removable) singularity at the origin, but it is only
unique for $\left\Vert \mathbf{x}\right\Vert \leq \pi $. This non-uniqueness
issue when extracting rotation axis and angle from a given rotation matrix
(or unit quaternions) is well-known \cite%
{BottemaRoth1979,AltmannBook1986,Wittenburg2016}, and can be treated if
continuity of the curve $h\left( t\right) $ is ensured.
\end{remark}

\begin{tabular}[t]{l}
\  \\ 
\textbf{Algorithm 5: 3rd-Order }$C^{2}$\textbf{\ Continuos Global Spline}%
\vspace{1.5ex}
\\ 
\begin{tabular}[t]{ll}
\begin{tabular}[t]{l}
\textbf{1. Input:}%
\vspace{0.5ex}
\\ 
\hspace{2ex}%
\begin{tabular}{ll}
\textbullet
& Knot points $t_{0},t_{1},\ldots ,t_{N}$ \\ 
\textbullet
& Samples $h_{0},h_{1},\ldots ,h_{n}$ \\ 
\textbullet
& Initial values $\mathbf{v}_{0},\mathbf{v}_{0}^{\prime }$ \\ 
\textbullet
& Reference $\bar{h}_{0}$%
\end{tabular}%
\end{tabular}
& 
\begin{tabular}[t]{l}
\textbf{2. Initialization:}%
\vspace{0.5ex}
\\ 
\hspace{2ex}%
\begin{tabular}{ll}
\textbullet
& $\boldsymbol{\alpha }_{0}^{\ast }:=\mathbf{v}_{0},\ \boldsymbol{\beta }%
_{0}^{\ast }:=\mathbf{v}_{0}^{\prime },\boldsymbol{\gamma }_{0}^{\ast }:=%
\mathbf{v}_{0}^{\prime \prime }$ \\ 
\textbullet
& $\boldsymbol{\beta }_{0}^{\ast }:=\mathbf{v}_{0}^{\prime }$ \\ 
\textbullet
& $T_{0}:=1$%
\end{tabular}%
\end{tabular}
\\ 
\begin{tabular}[t]{l}
\textbf{3. Computation of Spline Coefficients:} \\ 
\hspace{2ex}%
\textbf{FOR} $i=1,\ldots ,N$ \textbf{DO}%
\vspace{0.5ex}
\\ 
\hspace{2ex}%
\begin{tabular}[t]{ll}
\textbullet
& $T_{i}:=t_{i}-t_{i-1}$ \\ 
\textbullet
& $\delta _{i}:=T_{i}/T_{i-1}$ \\ 
\textbullet
& $\boldsymbol{\xi }_{i}:=\log (h_{0}^{-1}h_{i})$ \\ 
\textbullet
& $\Delta \boldsymbol{\xi }_{i}:=\boldsymbol{\xi }_{i}-\boldsymbol{\xi }%
_{i-1}$ \\ 
\textbullet
& $\boldsymbol{\alpha }_{i-1}:=\delta _{i}\boldsymbol{\alpha }_{i-1}^{\ast }$
\\ 
\textbullet
& $\boldsymbol{\beta }_{i-1}:=\delta _{i}^{2}\boldsymbol{\beta }_{i-1}^{\ast
}$ \\ 
\textbullet
& $\mathbf{d}_{i}:=\mathrm{dexp}_{-\boldsymbol{\xi }_{i-1}}^{-1}\boldsymbol{%
\alpha }_{i-1}$ \\ 
\textbullet
& $\mathbf{e}_{i}:=\mathrm{dexp}_{-\boldsymbol{\xi }_{i-1}}^{-1}\boldsymbol{%
\beta }_{i-1}-(\mathrm{D}_{-\boldsymbol{\xi }_{i-1}}\mathrm{dexp}%
^{-1})\left( \mathbf{d}_{i}\right) \boldsymbol{\alpha }_{i-1}$ \\ 
\textbullet
& $\mathbf{a}_{i}:=3\boldsymbol{\xi }_{i}-2\boldsymbol{\alpha }_{i-1}-\frac{1%
}{2}\boldsymbol{\beta }_{i-1}$ \\ 
\textbullet
& $\mathbf{b}_{i}:=6\boldsymbol{\xi }_{i}-6\boldsymbol{\alpha }_{i-1}-2%
\boldsymbol{\beta }_{i-1}$ \\ 
\textbullet
& $\boldsymbol{\alpha }_{i}^{\ast }:=\mathrm{dexp}_{-\boldsymbol{\xi }_{i}}%
\mathbf{a}_{i}$ \\ 
\textbullet
& $\boldsymbol{\beta }_{i}^{\ast }:=\mathrm{dexp}_{-\boldsymbol{\xi }_{i}}%
\mathbf{b}_{i}-(\mathrm{D}_{-\boldsymbol{\xi }_{i}}\mathrm{dexp})\left( 
\mathbf{a}_{i}\right) \mathbf{a}_{i}$%
\end{tabular}%
\vspace{0.5ex}
\\ 
\ \ \ \ \textbf{END}%
\end{tabular}%
\hspace{-5ex}
& 
\begin{tabular}[t]{l}
\textbf{4. Evaluation} at $t\in \left[ t_{i-1},t_{i}\right] $, with $\tau
_{i}\left( t\right) =\frac{t-t_{i-1}}{T_{i}}$:%
\vspace{0.5ex}
\\ 
\hspace{2ex}%
\begin{tabular}{rl}
$\boldsymbol{\xi }_{i}^{\left[ 3\right] }{}\left( \tau _{i}\right) $%
\hspace{-2.5ex}
& $=\boldsymbol{\xi }_{i-1}+\tau _{i}^{3}\Delta \boldsymbol{\xi }_{i}+\left(
\tau _{i}-\tau _{i}^{3}\right) \mathbf{d}_{i}+\frac{1}{2}\left( \tau
_{i}^{2}-\tau _{i}^{3}\right) \mathbf{e}_{i}$ \\ 
$h^{\left[ 3\right] }\left( t\right) $%
\hspace{-2.5ex}
& $=\bar{h}_{0}\exp \boldsymbol{\xi }_{i}^{\left[ 3\right] }{}\left( \tau
_{i}\left( t\right) \right) $%
\end{tabular}%
\end{tabular}%
\end{tabular}
\\ 
\ 
\end{tabular}

\begin{tabular}[t]{l}
\  \\ 
\textbf{Algorithm 6: 4th-Order }$C^{3}$\textbf{\ Continuos Global Spline}%
\vspace{1.5ex}
\\ 
\begin{tabular}[t]{l}
\textbf{1. Input:}%
\vspace{0.5ex}
\\ 
\hspace{2ex}%
\begin{tabular}{ll}
\textbullet
& Knot points $t_{0},t_{1},\ldots ,t_{N}$ \\ 
\textbullet
& Samples $h_{0},h_{1},\ldots ,h_{n}$ \\ 
\textbullet
& Initial values $\mathbf{v}_{0},\mathbf{v}_{0}^{\prime }\mathbf{,v}%
_{0}^{\prime \prime }$ \\ 
\textbullet
& Reference $\bar{h}_{0}$%
\end{tabular}%
\end{tabular}%
\hspace{8ex}%
\begin{tabular}[t]{l}
\textbf{2. Initialization:}%
\vspace{0.5ex}
\\ 
\hspace{2ex}%
\begin{tabular}{ll}
\textbullet
& $\boldsymbol{\alpha }_{0}^{\ast }:=\mathbf{v}_{0},\ \boldsymbol{\beta }%
_{0}^{\ast }:=\mathbf{v}_{0}^{\prime },\boldsymbol{\gamma }_{0}^{\ast }:=%
\mathbf{v}_{0}^{\prime \prime }$ \\ 
\textbullet
& $\boldsymbol{\beta }_{0}^{\ast }:=\mathbf{v}_{0}^{\prime }$ \\ 
\textbullet
& $\boldsymbol{\gamma }_{0}^{\ast }:=\mathbf{v}_{0}^{\prime \prime }$ \\ 
\textbullet
& $T_{0}:=1$%
\end{tabular}%
\end{tabular}%
\vspace{1.5ex}
\\ 
\begin{tabular}[t]{l}
\textbf{3. Computation of Spline Coefficients:} \\ 
\hspace{2ex}%
\textbf{FOR} $i=1,\ldots ,N$ \textbf{DO}%
\vspace{0.5ex}
\\ 
\hspace{2ex}%
\begin{tabular}[t]{ll}
\textbullet
& $T_{i}:=t_{i}-t_{i-1}$ \\ 
\textbullet
& $\delta _{i}:=T_{i}/T_{i-1}$ \\ 
\textbullet
& $\boldsymbol{\xi }_{i}:=\log (h_{0}^{-1}h_{i})$ \\ 
\textbullet
& $\Delta \boldsymbol{\xi }_{i}:=\boldsymbol{\xi }_{i}-\boldsymbol{\xi }%
_{i-1}$ \\ 
\textbullet
& $\boldsymbol{\alpha }_{i-1}:=\delta _{i}\boldsymbol{\alpha }_{i-1}^{\ast }$
\\ 
\textbullet
& $\boldsymbol{\beta }_{i-1}:=\delta _{i}^{2}\boldsymbol{\beta }_{i-1}^{\ast
}$ \\ 
\textbullet
& $\boldsymbol{\gamma }_{i-1}:=\delta _{i}^{3}\boldsymbol{\gamma }%
_{i-1}^{\ast }$ \\ 
\textbullet
& $\mathbf{d}_{i}:=\mathrm{dexp}_{-\boldsymbol{\xi }_{i-1}}^{-1}\boldsymbol{%
\alpha }_{i-1}$ \\ 
\textbullet
& $\mathbf{e}_{i}:=\mathrm{dexp}_{-\boldsymbol{\xi }_{i-1}}^{-1}\boldsymbol{%
\beta }_{i-1}-(\mathrm{D}_{-\boldsymbol{\xi }_{i-1}}\mathrm{dexp}%
^{-1})\left( \mathbf{d}_{i}\right) \boldsymbol{\alpha }_{i-1}$ \\ 
\textbullet
& $\mathbf{f}_{i}:=\mathrm{dexp}_{-\boldsymbol{\xi }_{i-1}}^{-1}\boldsymbol{%
\gamma }_{i-1}-2(\mathrm{D}_{-\boldsymbol{\xi }_{i-1}}\mathrm{dexp}%
^{-1})\left( \mathbf{d}_{i}\right) \boldsymbol{\beta }_{i-1}-(\mathrm{D}_{-%
\boldsymbol{\xi }_{i-1}}\mathrm{dexp}^{-1})\left( \mathbf{e}_{i}\right) 
\boldsymbol{\alpha }_{i-1}+(\mathrm{D}_{-\boldsymbol{\xi }_{i-1}}^{2}\mathrm{%
dexp})\left( \mathbf{d}_{i},\mathbf{d}_{i}\right) \boldsymbol{\alpha }_{i-1}$
\\ 
\textbullet
& $\mathbf{a}_{i}:=4\Delta \boldsymbol{\xi }_{i}-3\mathbf{d}_{i}-\mathbf{e}%
_{i}-\frac{1}{6}\mathbf{f}_{i}$ \\ 
\textbullet
& $\mathbf{b}_{i}:=12\Delta \boldsymbol{\xi }_{i}-12\mathbf{d}_{i}-5\mathbf{e%
}_{i}-\mathbf{f}_{i}$ \\ 
\textbullet
& $\mathbf{c}_{i}:=24\Delta \boldsymbol{\xi }_{i}-24\mathbf{d}_{i}-12\mathbf{%
e}_{i}-3\mathbf{f}_{i}$ \\ 
\textbullet
& $\boldsymbol{\alpha }_{i}^{\ast }:=\mathrm{dexp}_{-\boldsymbol{\xi }_{i}}%
\mathbf{a}_{i}$ \\ 
\textbullet
& $\boldsymbol{\beta }_{i}^{\ast }:=\mathrm{dexp}_{-\boldsymbol{\xi }_{i}}%
\mathbf{b}_{i}-(\mathrm{D}_{-\boldsymbol{\xi }_{i}}\mathrm{dexp})\left( 
\mathbf{a}_{i}\right) \mathbf{a}_{i}$ \\ 
\textbullet
& $\boldsymbol{\gamma }_{i}^{\ast }:=\mathrm{dexp}_{-\boldsymbol{\xi }_{i}}%
\mathbf{c}_{i}-2(\mathrm{D}_{-\boldsymbol{\xi }_{i}}\mathrm{dexp})\left( 
\mathbf{a}_{i}\right) \mathbf{b}_{i}-(\mathrm{D}_{-\boldsymbol{\xi }_{i}}%
\mathrm{dexp})\left( \mathbf{b}_{i}\right) \mathbf{a}_{i}+(\mathrm{D}_{-%
\boldsymbol{\xi }_{i}}^{2}\mathrm{dexp})\left( \mathbf{a}_{i},\mathbf{a}%
_{i}\right) \mathbf{a}_{i}$%
\end{tabular}%
\vspace{0.5ex}
\\ 
\ \ \ \ \textbf{END}%
\end{tabular}%
\vspace{1.5ex}
\\ 
\begin{tabular}[t]{l}
\textbf{4. Evaluation} at $t\in \left[ t_{i-1},t_{i}\right] $, with $\tau
_{i}\left( t\right) =\frac{t-t_{i-1}}{T_{i}}$:%
\vspace{0.5ex}
\\ 
\hspace{2ex}%
\begin{tabular}{rl}
$\boldsymbol{\xi }_{i}^{\left[ 4\right] }{}\left( \tau _{i}\right) $%
\hspace{-2.5ex}
& $=\boldsymbol{\xi }_{i-1}+\tau _{i}^{4}\Delta \boldsymbol{\xi }_{i}+\left(
\tau _{i}-\tau _{i}^{4}\right) \mathbf{d}_{i}+\frac{1}{2}\left( \tau
_{i}^{2}-\tau _{i}^{4}\right) \mathbf{e}_{i}+\frac{1}{6}\left( \tau
_{i}^{3}-\tau _{i}^{4}\right) \mathbf{f}_{i}$ \\ 
$h^{\left[ 4\right] }\left( t\right) $%
\hspace{-2.5ex}
& $=\bar{h}_{0}\exp \boldsymbol{\xi }_{i}^{\left[ 4\right] }{}\left( \tau
_{i}\left( t\right) \right) $%
\end{tabular}%
\end{tabular}
\\ 
\ 
\end{tabular}

\subsection{A $k$th-Order $C^{k-2}$ Continuos Spline with given Velocities
at Interpolation Points}

As in section \ref{secPOEVel}, it is now assumed that velocities $\mathbf{v}%
_{0},\mathbf{v}_{1},\ldots ,\mathbf{v}_{n}$ are prescribed at the knot
points. Algorithms 3 and 4 are readily adopted to the case of non-zero
initial value $\boldsymbol{\xi }_{i}\left( 0\right) \neq \mathbf{0}$. This
leads to Algorithms 7 and 8. The splines are constructed by concatenating
the 2-point boundary interpolation formulae (\ref{3PPB}) and (\ref{4PPB}).
In the 4th-order algorithm, the derivative $\mathbf{v}_{i}^{\prime }\left(
t_{1-i}\right) $ at begin of segment $i$ is computed with (\ref{vdot}).

\begin{tabular}[t]{l}
\textbf{Algorithm 7: 3rd-Order }$C^{1}$\textbf{\ Continuos Global Spline
with Given Velocities}%
\vspace{1.5ex}
\\ 
\begin{tabular}[t]{l}
\textbf{1. Input:}%
\vspace{0.5ex}
\\ 
\hspace{2ex}%
\begin{tabular}{ll}
\textbullet
& Knot points $t_{0},t_{1},\ldots ,t_{N}$ \\ 
\textbullet
& Samples $h_{0},h_{1},\ldots ,h_{n}$ \\ 
\textbullet
& Velocities $\mathbf{v}_{0},\mathbf{v}_{1},\ldots ,\mathbf{v}_{n}$ \\ 
\textbullet
& Reference $\bar{h}_{0}$%
\end{tabular}%
\end{tabular}%
\hspace{8ex}%
\begin{tabular}[t]{l}
\textbf{2. Initialization:}%
\vspace{0.5ex}
\\ 
\hspace{2ex}%
\begin{tabular}{ll}
\textbullet
& $\boldsymbol{\alpha }_{0}^{\ast }:=\mathbf{v}_{0}$ \\ 
\textbullet
& $T_{0}:=1$%
\end{tabular}%
\end{tabular}%
\vspace{1.5ex}
\\ 
\begin{tabular}[t]{l}
\textbf{3. Computation of Spline Coefficients:} \\ 
\hspace{2ex}%
\textbf{FOR} $i=1,\ldots ,N$ \textbf{DO}%
\vspace{0.5ex}
\\ 
\hspace{2ex}%
\begin{tabular}[t]{ll}
\textbullet
& $T_{i}:=t_{i}-t_{i-1}$ \\ 
\textbullet
& $\delta _{i}:=T_{i}/T_{i-1}$ \\ 
\textbullet
& $\boldsymbol{\xi }_{i}:=\log (h_{0}^{-1}h_{i})$ \\ 
\textbullet
& $\Delta \boldsymbol{\xi }_{i}=\log (h_{i-1}^{-1}h_{i})$ \\ 
\textbullet
& $\boldsymbol{\alpha }_{i}^{\ast }:=T_{i}%
\color[rgb]{0.7,0,0}%
\mathbf{v}_{i}%
\color{black}%
$ \\ 
\textbullet
& $\boldsymbol{\alpha }_{i-1}:=T_{i}%
\color[rgb]{0.7,0,0}%
\mathbf{v}_{i-1}%
\color{black}%
$ \\ 
\textbullet
& $\mathbf{a}_{i}:=\mathrm{dexp}_{-\boldsymbol{\xi }_{i}}^{-1}\boldsymbol{%
\alpha }_{i}^{\ast }$ \\ 
\textbullet
& $\mathbf{d}_{i}:=\mathrm{dexp}_{-\boldsymbol{\xi }_{i-1}}^{-1}\boldsymbol{%
\alpha }_{i-1}$%
\end{tabular}%
\vspace{0.5ex}
\\ 
\ \ \ \ \textbf{END}%
\end{tabular}%
\vspace{1.5ex}
\\ 
\begin{tabular}[t]{l}
\textbf{4. Evaluation} at $t\in \left[ t_{i-1},t_{i}\right] $, with $\tau
_{i}\left( t\right) =\frac{t-t_{i-1}}{T_{i}}$:%
\vspace{0.5ex}
\\ 
\hspace{2ex}%
$%
\begin{tabular}{rl}
$\boldsymbol{\xi }_{i}^{\left[ 3\right] }{}\left( \tau _{i}\right) $%
\hspace{-2.5ex}
& $=\bar{\boldsymbol{\xi }}_{i}+\tau _{i}^{3}\Delta \boldsymbol{\xi }%
_{i}+\left( \tau _{i}-\tau _{i}^{3}\right) \boldsymbol{\alpha }_{i-1}+\frac{1%
}{2}\left( \tau _{i}^{2}-\tau _{i}^{3}\right) \boldsymbol{\beta }_{i-1}$ \\ 
$h^{\left[ 3\right] }\left( t\right) $%
\hspace{-2.5ex}
& $=\bar{h}_{0}\exp \boldsymbol{\xi }_{i}^{\left[ 3\right] }{}\left( \tau
_{i}\left( t\right) \right) $%
\end{tabular}%
$%
\end{tabular}%
\end{tabular}

\begin{tabular}[t]{l}
\textbf{Algorithm 8: 4th-Order }$C^{2}$\textbf{\ Continuos Global Spline
with given Velocities}%
\vspace{1.5ex}
\\ 
\begin{tabular}[t]{l}
\textbf{1. Input:}%
\vspace{0.5ex}
\\ 
\hspace{2ex}%
\begin{tabular}{ll}
\textbullet
& Knot points $t_{0},t_{1},\ldots ,t_{N}$ \\ 
\textbullet
& Samples $h_{0},h_{1},\ldots ,h_{n}$ \\ 
\textbullet
& Velocities $\mathbf{v}_{0},\mathbf{v}_{1},\ldots ,\mathbf{v}_{n}$ \\ 
\textbullet
& Initial derivative $\mathbf{v}_{0}^{\prime }$ \\ 
\textbullet
& Reference $\bar{h}_{0}$%
\end{tabular}%
\end{tabular}%
\hspace{8ex}%
\begin{tabular}[t]{l}
\textbf{2. Initialization:}%
\vspace{0.5ex}
\\ 
\hspace{2ex}%
\begin{tabular}{ll}
\textbullet
& $\boldsymbol{\alpha }_{0}^{\ast }:=\mathbf{v}_{0}$ \\ 
\textbullet
& $\boldsymbol{\beta }_{0}^{\ast }:=\mathbf{v}_{0}^{\prime }$ \\ 
\textbullet
& $T_{0}:=1$%
\end{tabular}%
\end{tabular}%
\vspace{1.5ex}
\\ 
\begin{tabular}[t]{l}
\textbf{3. Computation of Spline Coefficients:} \\ 
\hspace{2ex}%
\textbf{FOR} $i=1,\ldots ,N$ \textbf{DO}%
\vspace{0.5ex}
\\ 
\hspace{2ex}%
\begin{tabular}[t]{ll}
\textbullet
& $T_{i}:=t_{i}-t_{i-1}$ \\ 
\textbullet
& $\delta _{i}:=T_{i}/T_{i-1}$ \\ 
\textbullet
& $\bar{\boldsymbol{\xi }}_{i}:=\log (h_{0}^{-1}h_{i})$ \\ 
\textbullet
& $\Delta \boldsymbol{\xi }_{i}=\log (h_{i-1}^{-1}h_{i})$ \\ 
\textbullet
& $\boldsymbol{\alpha }_{i}^{\ast }:=T_{i}%
\color[rgb]{0.7,0,0}%
\mathbf{v}_{i}%
\color{black}%
$ \\ 
\textbullet
& $\boldsymbol{\alpha }_{i-1}:=T_{i}%
\color[rgb]{0.7,0,0}%
\mathbf{v}_{i-1}%
\color{black}%
$ \\ 
\textbullet
& $\mathbf{a}_{i}:=\mathrm{dexp}_{-\boldsymbol{\xi }_{i}}^{-1}\boldsymbol{%
\alpha }_{i}^{\ast }$ \\ 
\textbullet
& $\mathbf{b}_{i}:=-12\boldsymbol{\xi }_{i}+6\boldsymbol{\alpha }_{i-1}+%
\boldsymbol{\beta }_{i-1}+6\mathbf{a}_{i}$ \\ 
\textbullet
& $\boldsymbol{\beta }_{i}^{\ast }:=\mathrm{dexp}_{-\boldsymbol{\xi }_{i}}%
\mathbf{b}_{i}-(\mathrm{D}_{-\boldsymbol{\xi }_{i}}\mathrm{dexp})\left( 
\mathbf{a}_{i}\right) \mathbf{a}_{i}$%
\end{tabular}%
\vspace{0.5ex}
\\ 
\ \ \ \ \textbf{END}%
\end{tabular}%
\vspace{1.5ex}
\\ 
\begin{tabular}[t]{l}
\textbf{4. Evaluation} at $t\in \left[ t_{i-1},t_{i}\right] $, with $\tau
_{i}\left( t\right) =\frac{t-t_{i-1}}{T_{i}}$:%
\vspace{0.5ex}
\\ 
\hspace{2ex}%
$%
\begin{tabular}{rl}
$\boldsymbol{\xi }_{i}^{\left[ 4\right] }{}\left( \tau _{i}\right) $%
\hspace{-2.5ex}
& $=\boldsymbol{\xi }_{i-1}+\left( 4\tau _{i}^{3}-3\tau _{i}^{4}\right)
\Delta \boldsymbol{\xi }_{i}+\left( 1-\tau _{i}\right) ^{2}\left( \tau
_{i}+2\tau _{i}^{2}\right) \mathbf{d}_{i}+\frac{1}{2}\tau _{i}^{2}\left(
\tau _{i}-1\right) ^{2}\mathbf{e}_{i}+\left( \tau _{i}^{4}-\tau
_{i}^{3}\right) \mathbf{a}_{i}$ \\ 
$h^{\left[ 4\right] }\left( t\right) $%
\hspace{-2.5ex}
& $=\bar{h}_{0}\exp \boldsymbol{\xi }_{i}^{\left[ 4\right] }{}\left( \tau
_{i}\left( t\right) \right) $%
\end{tabular}%
$%
\end{tabular}%
\end{tabular}

\begin{example}
The interpolation of a cubic spatial rotation in $SO\left( 3\right) $,
described by $\boldsymbol{\xi }\left( t\right) =t\boldsymbol{\xi }_{1}+t^{3}%
\boldsymbol{\xi }_{2}$, was investigated in example \ref{exaLimit},
revealing the limitation of POE-splines. The same input data, $h_{0},\ldots
,h_{10}$ and $\mathbf{v}_{0},\ldots ,\mathbf{v}_{10}$, are now used for a
global 3rd-order spline interpolation. The results obtained with the
3rd-order global spline for given initial values (Algorithm 5) and when
velocities samples are provided (Algorithm 7) are shown in Fig. \ref%
{figCubicSO3_3rd_GlobalVel}. The global interpolation can clearly
reconstruct the cubic curve, in contrast to the POE-spline (see Fig. \ref%
{figOneParam_3rd}). The same is true when applying the 4th-order
interpolation.
\end{example}

\begin{figure}[h]
\begin{center}
\includegraphics[draft=false,height=5cm]{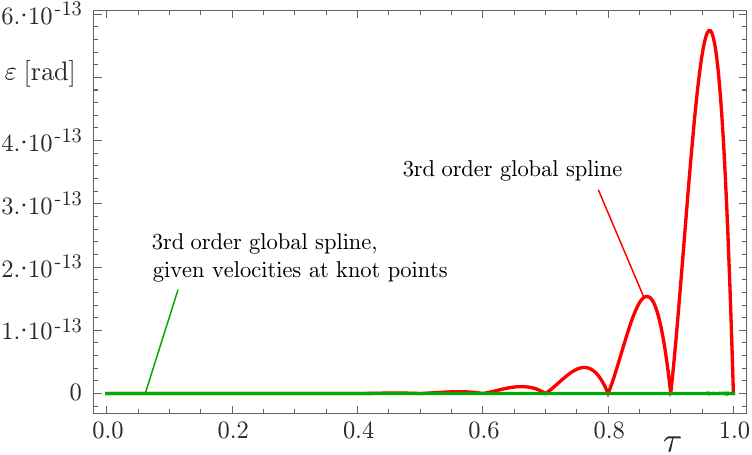}
\end{center}
\caption{Error when interpolating a general cubic motion in $SO(3)$ with a
global 3rd-order spline.}
\label{figCubicSO3_3rd_GlobalVel}
\end{figure}

\begin{figure}[b]
\begin{center}
\includegraphics[draft=false,height=5.5cm]{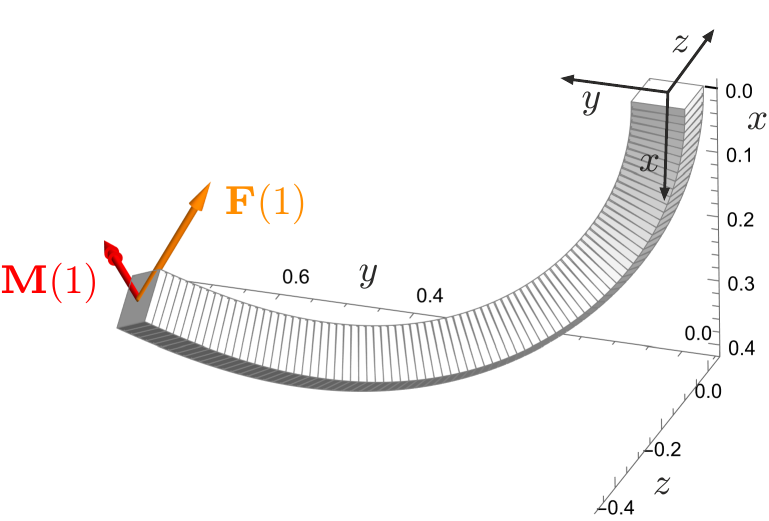}
\end{center}
\caption{Static deformation of the slender rod when subjected to the wrench $%
\mathbf{W}\left( 1\right) =\left[ \mathbf{M}\left( 1\right) ,\mathbf{F}%
\left( 1\right) \right] $ at the terminal end.}
\label{figBeamForce_3DView}
\end{figure}
\begin{figure}[h]
\hfill a)%
\includegraphics[draft=false,height=4.5cm]{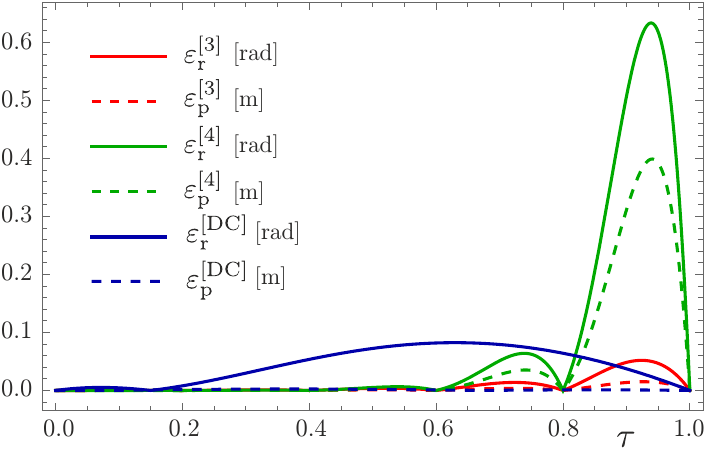}%
\hfill b)%
\includegraphics[draft=false,height=4.5cm]{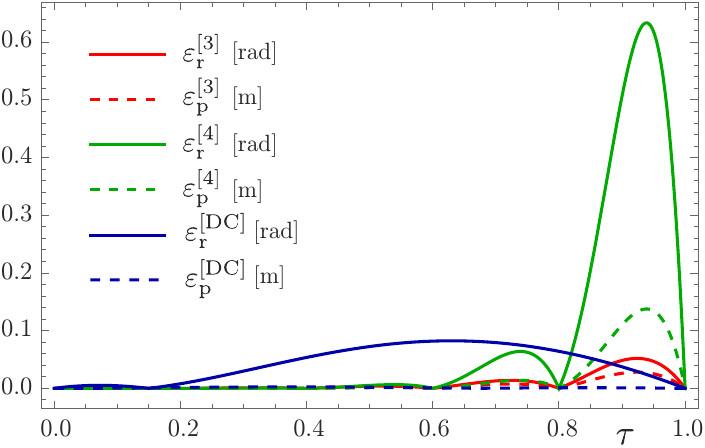}
\hfill \,
\caption{\color[rgb]{0.5,0,0}Interpolation error $\boldsymbol{\protect%
\varepsilon }_{\mathrm{r}}$ and $\boldsymbol{\protect\varepsilon }_{\mathrm{p%
}}$ for orientation and position when using a) POE-spline, and b) global
spline. Additionally shown in blue is the error for a B\'{e}zier curve
interpolating through the 5 points using De Casteljau algorithm 9. }
\label{figBeamForce_Rot+Pos_Loc_3+4}
\end{figure}

\begin{example}
\label{examRodSpline}The shape reconstruction of a slender rod undergoing
spatial deflections is addressed. In example \ref{exaBeam2Point}, the shape
was approximated by a 2-point interpolation method. Now a spline on $%
SE\left( 3\right) $ is used to represent the rod, and parameterized by the
normalized global path parameter $t\in \left[ 0,1\right] $. 
%
To this end, the cross section displacement is sampled at $n=5$ equidistant
points $t_{1},\ldots ,t_{n}$ along the rod yielding $h_{0},\ldots ,h_{n}\in
SE\left( 3\right) $. 
%
The poses are obtained from the reference solution $\boldsymbol{\xi }\left(
t\right) $ as $h_{i}=\exp \boldsymbol{\xi }\left( t_{i}\right) $ (see
example \ref{exaBeam2Point}). For this example, the deformation to be
interpolated is the result of a 
%
concentrated load $\bar{\mathbf{W}}=\left[ \mathbf{M},\mathbf{F}\right] $ 
%
in (\ref{ChiBar}), applied at the terminal end of the rod ($\tau =1$). 
%
Deformations due to a pure moment about a principal cross section axis are 
%
exactly recovered by any of the splines since the deformation is described
by a constant screw axis (constant curvature). In the following, the applied
wrench is computed with the force $\mathbf{F}\left( 1\right) =\left[
0,0.12,-0.08\right] $ and moment $\mathbf{M}\left( 1\right) =\left[
0,-0.008,-0.012\right] $. The resulting shape is shown in Fig. \ref%
{figBeamForce_3DView}.

The initial an terminal deformation measure, $\mathbf{v}_{0}$ and $\mathbf{v}%
_{1}$, and the derivative $\mathbf{v}_{0}^{\prime }$, needed for the spline
interpolation, are computed from the reference solution. Fig. \ref%
{figBeamForce_Rot+Pos_Loc_3+4}a) shows the interpolation errors when using
the 3rd- and 4th-order POE spline algorithms 1 and 2, respectively. The
global spline, constructed with algorithms 5 and 6, yields the results in
Fig. \ref{figBeamForce_Rot+Pos_Loc_3+4}b). Clearly, for this general
deformation the POE- and the global spline yield almost the same results.
The visible larger error of the 4th-order spline can be explained by its
sensitivity to the derivative $\mathbf{v}_{0}^{\prime }$, which is obtained
numerically. 
%
For comparison, the result of a B\'{e}zier curve interpolation constructed
with the De Casteljau algorithm 9 (appendix B) is shown in blue. The 5
samples yield a 4th-order B\'{e}zier curve. Apparently, the curve does not
pass through the intermediate configurations, which serve as control points,
but interpolates between start and end configuration (see also appendix B).
To make the curve pass through the points needs computation of control
points in the Lie group, which is an open problem and not the topic of this
paper. For constructing B-splines on $SE\left( 3\right) $, a De Boor
algorithm was presented in \cite{SprottRavani2002,Ding2002}. 
%
Next the POE spline algorithms 3 and 4 are applied. To this end, the
deformation $\mathbf{v}_{i},i=0,\ldots ,n$ is computed at the $n=5$
interpolation points. This is found from the equilibrium conditions as $%
\mathbf{v}_{i}=\mathbf{K}^{-1}\mathbf{Ad}_{h_{i}}^{T}\mathbf{K}\left( 
\mathbf{v}_{0}-\mathbf{v}^{0}\right) +\mathbf{v}^{0}$, where $\mathbf{Ad}%
_{h_{i}}$ is the adjoined operator on $SE\left( 3\right) $ \cite%
{MurrayBook,SeligBook}. The improvement in accuracy when providing the
deformations $\mathbf{v}_{i}$ at the interpolation points is apparent from
Fig. \ref{figBeamForce_Rot+Pos_LocVel_3+4}a). This alleviates the
sensitivity of the 4th-order spline, which better approximates the shape.
The results obtained with the global spline algorithms 5 and 6, with given
deformations $\mathbf{v}_{i},i=0,\ldots ,n$, are shown in Fig. \ref%
{figBeamForce_Rot+Pos_LocVel_3+4}b), which are again similar to those of the
POE-spline. Finally, Fig. \ref{figBeamForce_Rot+Pos_LocVel_3+4_10Samples}
shows the interpolation error when $n=10$ interpolation points are used,
i.e. the number of segments is doubled.\newline
While for this example the $h_{i}$ and $\mathbf{v}_{i}$ are computed from a
reference solution, it should be remarked that it depends on the application
how the $h_{i}$ are obtained.
\end{example}

\begin{figure}[h]
\hfill a)%
\includegraphics[draft=false,height=4.5cm]{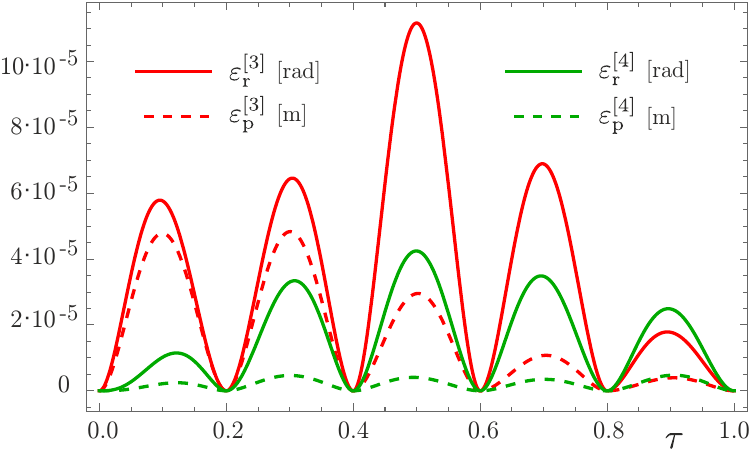}%
\hfill b) %
\includegraphics[draft=false,height=4.5cm]{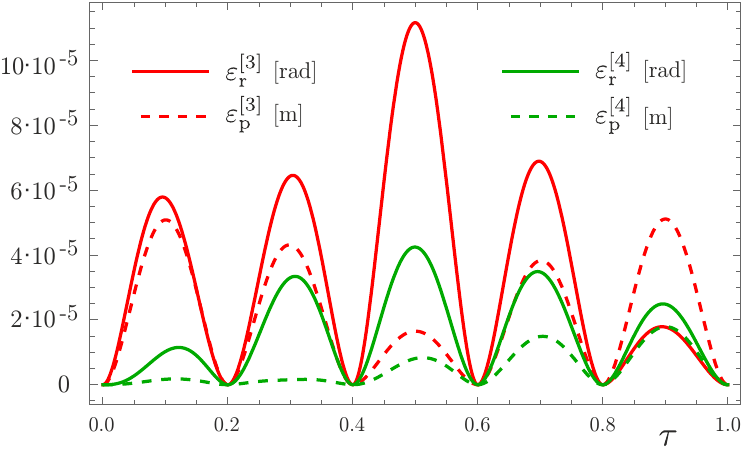}
\hfill \,
\caption{Interpolation error $\boldsymbol{\protect\varepsilon }_{\mathrm{r}}$
and $\boldsymbol{\protect\varepsilon }_{\mathrm{p}}$ for orientation and
position obtained with a) POE-spline, and b) global spline, when
deformations are prescribed at the interpolation points.}
\label{figBeamForce_Rot+Pos_LocVel_3+4}
\end{figure}
\begin{figure}[h]
\hfill a)%
\includegraphics[draft=false,height=4.5cm]{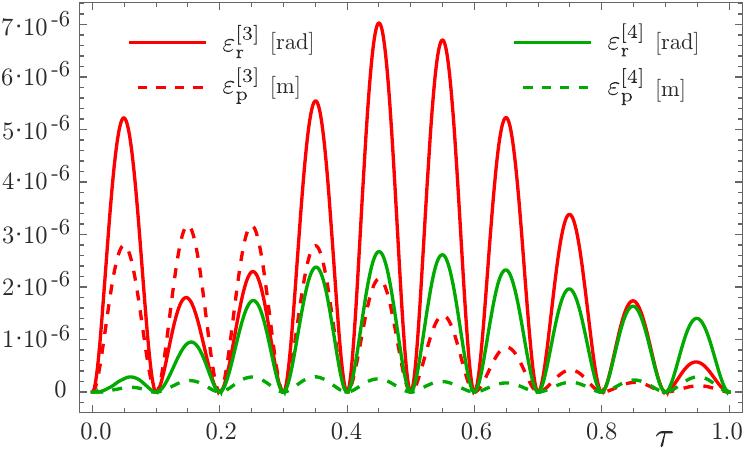}%
\hfill b) %
\includegraphics[draft=false,height=4.5cm]{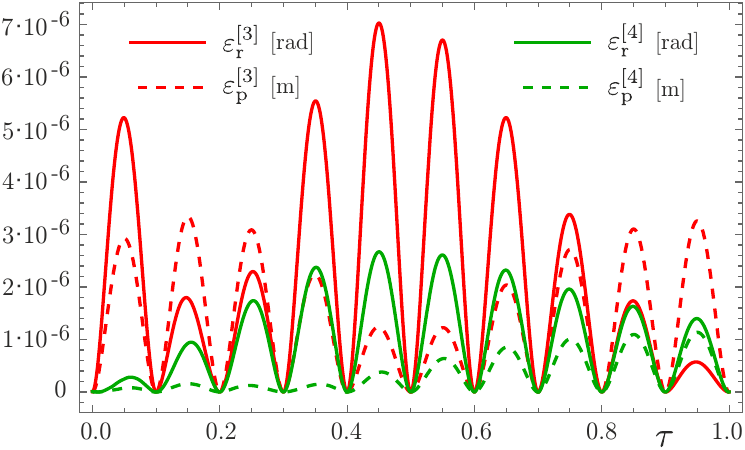}
\hfill
\caption{Interpolation error $\boldsymbol{\protect\varepsilon }_{\mathrm{r}}$
and $\boldsymbol{\protect\varepsilon }_{\mathrm{p}}$ for orientation and
position obtained with a) POE-spline, and b) global spline with 20 segments,
when deformations are prescribed at the interpolation points.}
\label{figBeamForce_Rot+Pos_LocVel_3+4_10Samples}
\end{figure}

\newpage

\begin{example}
%
The elastic rod from example \ref{examRodSpline} is considered, but with a
rectangular cross section $4\times 12$\thinspace mm$^{2}$. The rod is
subjected to a force $\mathbf{F}\left( 0\right) =\left[ 0,-0.03,0.01\right] $
and moment $\mathbf{M}\left( 0\right) =\left[ 0,0.002,0.002\right] $ applied
at the initial end at $\tau =0$ that is in static equilibrium with the force 
$\mathbf{F}\left( 1\right) =\left[ -0.0021,0.0192,-0.0181\right] $ and
moment $\mathbf{M}\left( 1\right) =\left[ 0.0296,0.0082,0.0075\right] $ at
the terminal with $\tau =1$. The numerically exact displacement obtained as
solution of (\ref{Poisson}) and (\ref{ChiBar}) is shown in Fig. \ref%
{figPOE_FlatRod_Example1} in gray color. The shape reconstructed with the
3rd-order interpolation is shown in Fig. \ref{figPOE_FlatRod_Example1}a),
and in Fig. \ref{figPOE_FlatRod_Example1}b) when additionally the
deformation is prescribed at the interpolation points. The interpolation
error is shown in Fig. \ref{figPOE_FlatRod_Example1_err}. Results are shown
in Fig. \ref{figPOE_FlatRod_Example2} when the rod exhibits larger
variations in the deformation field. In all examples it is apparent that
providing the deformations at the interpolation points reduces the error
amplification with increasing via points, which is well-known for spline
interpolations in general. For completeness, the results are shown in Fig. %
\ref{figPOE_FlatRod_Example1_err} for the 4th-order B\'{e}zier curve
constructed with the 5 equidistant samples along the rod used as control
points using the De Casteljau algorithm 9.%

\begin{figure}[h]
\begin{center}
a) %
\includegraphics[draft=false,height=4.cm]{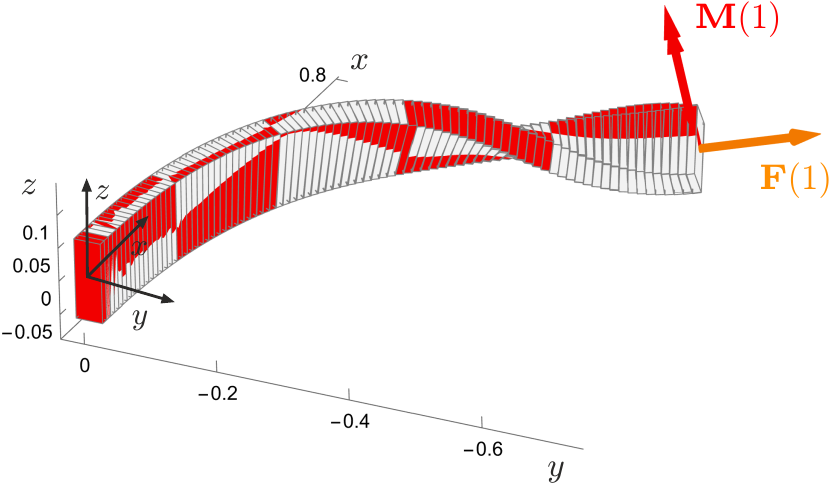}%
\hfil b)%
\includegraphics[draft=false,height=4.cm]{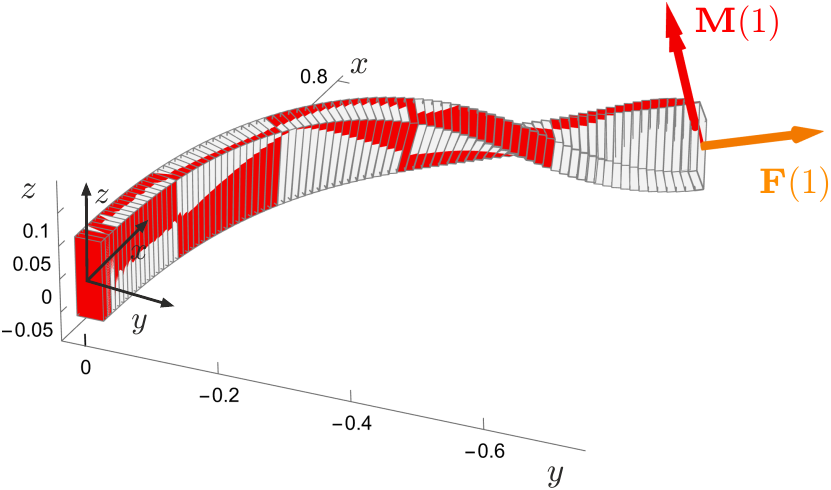}
\end{center}
\caption{%
%
Shape of flexible rod resulting from a) the 3rd-order interpolation and b)
the 3rd-order interpolation with deformation prescribed at the interpolation
pints (red). The exact numerical solution is shown in gray color (partially
covered). }
\label{figPOE_FlatRod_Example1}
\end{figure}
\begin{figure}[h]
\hfill a)%
\includegraphics[draft=false,height=4.5cm]{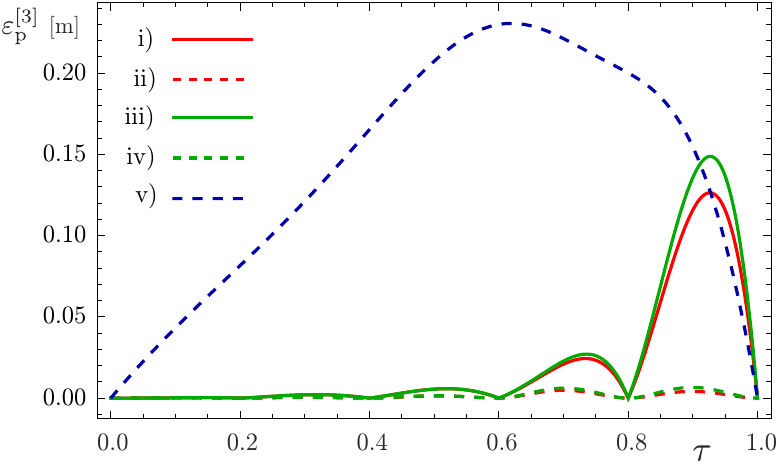}
\hfill b) %
\includegraphics[draft=false,height=4.5cm]{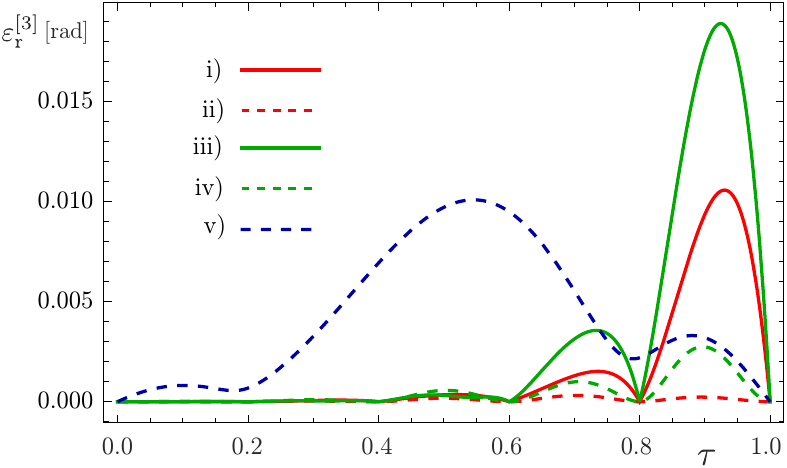}
\hfill \,
\caption{a) Position error $\boldsymbol{\protect\varepsilon }_{\mathrm{p}}$
and b) orientation error $\boldsymbol{\protect\varepsilon }_{\mathrm{p}}$
obtained with 3r-order piecewise POE-spline (i), global POE spline (ii),
piecewise POE-spline (iii) and global POE-spline (iv) with prescribed
deformation at interpolation points, and the B\'{e}zier curve of order 4
constructed with the De Casteljau algotithm 9 (v). }
\label{figPOE_FlatRod_Example1_err}
\end{figure}

\begin{figure}[h]
\begin{center}
a)%
\includegraphics[draft=false,height=5.5cm]{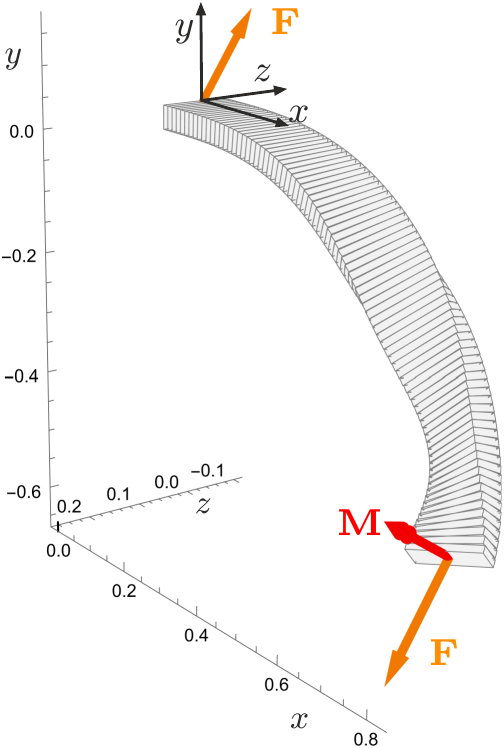}
\hfil b)%
\includegraphics[draft=false,height=5.5cm]{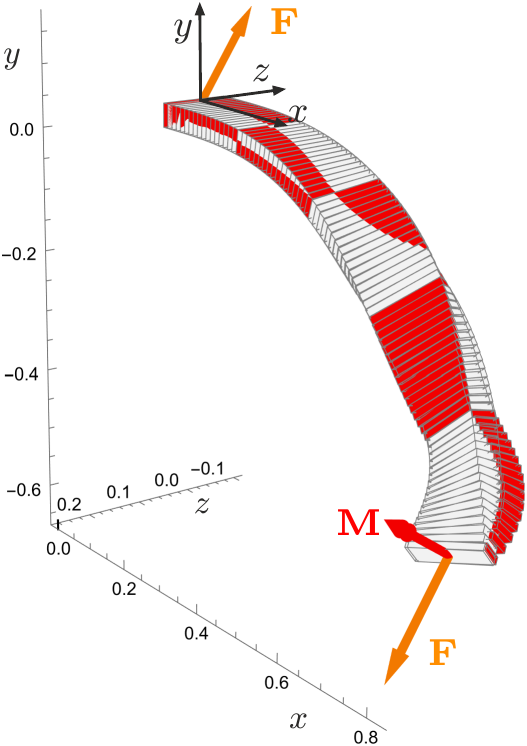}%
\hfil c)%
\includegraphics[draft=false,height=5.5cm]{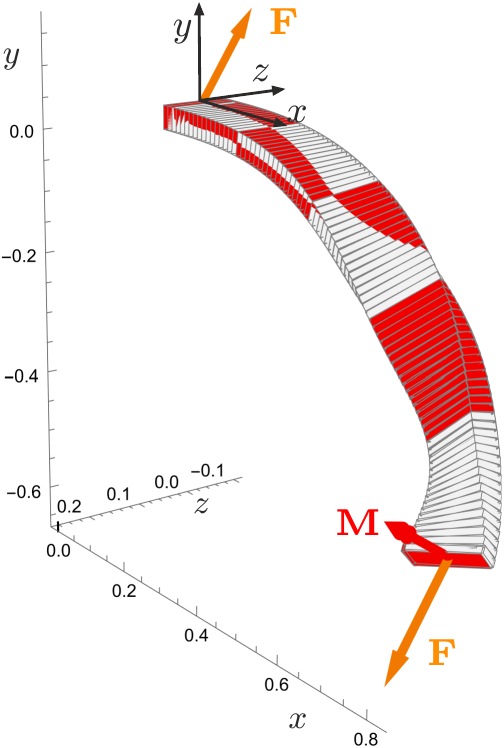}
\end{center}
\caption{%
%
Shape of flexible rod with larger variations of the deformation. a)
Numerically exact solution, b) 3rd-order piecewise interpolation, c)
3rd-order piecewise interpolation with prescribed deformation at the five
interpolation points.}
\label{figPOE_FlatRod_Example2}
\end{figure}
\end{example}

\newpage

\section{Conclusion and Outlook}

This paper addressed the interpolation problem on connected Lie groups, with
particular focus on interpolating rigid body motions, i.e. curves in $%
SO\left( 3\right) $ and $SE\left( 3\right) $. A $k$th-order 2-point
interpolation was derived from a truncated series solution of the Poisson
equation. The minimum acceleration and minimum energy curves w.r.t. a left
invariant metric are recovered as spacial cases. Thereupon, $k$th-order $%
C^{k-1}$ continuos POE spline interpolation schemes can be constructed. A
3rd- and 4th-order interpolation scheme was presented. The 3rd-order spline
algorithm is equivalent to that proposed in \cite%
{ParkRavani1997,KangPark1998}. 
%
An important contribution of this paper is the extension of these algorithms
to additionally allow prescribing a vector field, in addition to the points,
the interpolation curve must match. In case of rigid body motions, this
admits prescribing the velocity at the interpolation points, thus allows
trajectory planning through a given set of states rather than just poses.
These formulations also reduce the sensitivity of all such spline
formulations w.r.t. the initial conditions. Interpolation of frame curves in
Euclidean space has various further applications. One such applications is
the shape reconstruction of geometric non-linear flexible rods modeled using
Cosserat theory. This leads to an interpolation problem on $SE\left(
3\right) $. The main benefit of the proposed formulation is that it does not
explicitly rely on geodesics. The reported results confirm the potential for
such applications.

%
It was pointed out that a $k$th-order POE spline cannot exactly represent a $%
k$th-order motion on $G$, i.e. a curve described by canonical coordinates
that are a $k$th-order polynomials in the path parameter. To overcome this
problem, globally parameterized splines were introduced. This rests on a
2-point interpolation with non-zero initial values of the coordinates, in
contrast to existing interpolation schemes. The global parameterization is
particularly relevant when a polynomial curve in $G$ is to be interpolated,
which is a rather special situation, however. The global splines are also
important when the initial value of the exponential coordinates is non-zero.
This happens in case of continuum elements in soft robots, for instance,
where the initial $h\left( 0\right) \neq h_{0}$ and terminal pose $h\left(
1\right) $ are computed from an elastic equilibrium.

Future work will investigate the interpolation on subspaces of $G$ that are
not subgroups. An important special case are symmetric spaces \cite%
{WuCarricato2020,CarricatoMMT2025}. To this end, the higher-order solutions
must be derived taking into account particular generators of the symmetric
space.

\bmsection*{Acknowledgments} This work was supported by the LCM-K2 Center
within the framework of the Austrian COMET-K2 program.

\bmsection*{Conflict of interest}

The authors declare no potential conflict of interests.

\appendix

\section{\label{secAppendix}Closed Form Expressions for the Lie Groups $%
SO\left( 3\right) $ and $SE\left( 3\right) $}

In the following, the closed form relations are summarized, which can be
found in \cite{RSPA2021}, and partially in \cite{MurrayBook,SeligBook}. To
be consistent with the notation of this paper, Lie group elements are
denoted with $g$ and elements of the Lie algebra with $\boldsymbol{\xi }$.

The Lie group $SO\left( 3\right) $ is the group of orthogonal $3\times 3$
matrices with determinate equal to 1, referred to as rotation matrices. A
rotation matrix $g\in SO\left( 3\right) $ is expressed in terms of a unit
vector $\mathbf{n}\in {\mathbb{R}}^{3}$ along the rotation axis and rotation
angle $\varphi $ via the exponential map%
\begin{equation}
\exp \tilde{\boldsymbol{\xi }}=\mathbf{I}+\alpha \tilde{\boldsymbol{\xi }}+%
\tfrac{1}{2}\beta \tilde{\boldsymbol{\xi }}^{2}  \label{expSO3}
\end{equation}%
where $\boldsymbol{\xi }:=\varphi \mathbf{n},\alpha :=\mathrm{sinc}\varphi
,\beta :=\mathrm{sinc}^{2}\mathrm{\,}\frac{\varphi }{2}$, and $\tilde{%
\boldsymbol{\xi }}\in so\left( 3\right) $ is the skew symmetric matrix
associated with $\boldsymbol{\xi }\in {\mathbb{R}}^{3}$. The Lie algebra $%
so\left( 3\right) $ is the vector space of skew symmetric $3\times 3$
matrices. With the unique correspondence of $\boldsymbol{\xi }\in {\mathbb{R}%
}^{3}$ and $\tilde{\boldsymbol{\xi }}\in so\left( 3\right) $, the notation $%
\boldsymbol{\xi }$ and $\tilde{\boldsymbol{\xi }}$ are used interchangeably.
The right-trivialized differential of (\ref{expSO3}), and its inverse admit
the closed form expressions%
\begin{eqnarray}
\mathbf{dexp}_{\boldsymbol{\xi }} &=&\mathbf{I}+\tfrac{\beta }{2}\tilde{%
\boldsymbol{\xi }}+\tfrac{1}{\left\Vert \mathbf{x}\right\Vert ^{2}}\left(
1-\alpha \right) \tilde{\boldsymbol{\xi }}^{2}=\mathbf{I}+\tfrac{\beta }{2}%
\tilde{\boldsymbol{\xi }}+\left( 1-\alpha \right) \tilde{\mathbf{n}}^{2}
\label{dexpSO3} \\
\mathbf{dexp}_{\boldsymbol{\xi }}^{-1} &=&\mathbf{I}-\tfrac{1}{2}\tilde{%
\boldsymbol{\xi }}+\tfrac{1}{\left\Vert \mathbf{x}\right\Vert ^{2}}\left(
1-\gamma \right) \tilde{\boldsymbol{\xi }}^{2}=\mathbf{I}-\tfrac{1}{2}\tilde{%
\boldsymbol{\xi }}+\left( 1-\gamma \right) \tilde{\mathbf{n}}^{2}
\label{SO3dexpInv1}
\end{eqnarray}%
where $\gamma :=\frac{\alpha }{\beta }$, $\delta :=\frac{1-\alpha }{\varphi
^{2}}$. The Lie bracket on $so\left( 3\right) $ is given by the cross
product, $\left[ \boldsymbol{\xi }_{1},\boldsymbol{\xi }_{2}\right] =\mathbf{%
ad}_{\boldsymbol{\xi }_{1}}\boldsymbol{\xi }_{2}=\tilde{\boldsymbol{\xi }}%
_{1}\boldsymbol{\xi }_{2}=\boldsymbol{\xi }_{1}\times \boldsymbol{\xi }_{2}$%
, for $\boldsymbol{\xi }_{1},\boldsymbol{\xi }_{2}\in {\mathbb{R}}^{3}\cong
so\left( 3\right) $.

The Lie group $SE\left( 3\right) $ is the group of isometric and orientation
preserving transformations in $E^{3}$ \cite{SeligBook}. In matrix
representation, a typical element is%
\begin{equation}
g=%
\begin{bmatrix}
\mathbf{R} & \mathbf{r} \\ 
\mathbf{0} & 1%
\end{bmatrix}%
\in SE\left( 3\right)  \label{C}
\end{equation}%
where $\mathbf{R}\in SO\left( 3\right) $, $\mathbf{r}\in {\mathbb{R}}^{3}$.
The Lie algebra $se\left( 3\right) $ is the vector space of matrices%
\begin{equation}
\hat{\boldsymbol{\xi }}=%
\begin{bmatrix}
\tilde{\mathbf{x}} & \mathbf{y} \\ 
\mathbf{0} & 0%
\end{bmatrix}%
\in se\left( 3\right)  \label{se3}
\end{equation}%
where $\tilde{\mathbf{x}}\in so\left( 3\right) $, $\mathbf{y}\in {\mathbb{R}}%
^{3}$. The exp map can be expressed in closed form as, with $\mathbf{dexp}_{%
\mathbf{x}}$ in (\ref{dexpSO3}),%
\begin{equation}
\exp (\hat{\boldsymbol{\xi }})=%
\begin{bmatrix}
\exp \tilde{\mathbf{x}} & \mathbf{dexp}_{\mathbf{x}}\mathbf{y} \\ 
\mathbf{0} & 0%
\end{bmatrix}%
.  \label{expSE3}
\end{equation}%
The vector space ${\mathbb{R}}^{6}$ is isomorphic to $se\left( 3\right) $ by
identifying $\hat{\boldsymbol{\xi }}\in se\left( 3\right) $ with $%
\boldsymbol{\xi }=\left( \mathbf{x},\mathbf{y}\right) \in {\mathbb{R}}^{6}$.
Therefore, the exponential map will likewise be written as $\exp (%
\boldsymbol{\xi })$ with $\boldsymbol{\xi }\in {\mathbb{R}}^{6}$. The vector 
$\boldsymbol{\xi }$ is the screw coordinate vector that describes a screw
motion given by $\exp (\boldsymbol{\xi })\in SE\left( 3\right) $. The Lie
bracket on $se\left( 3\right) $ is the screw product, $\left[ \boldsymbol{%
\xi }_{1},\boldsymbol{\xi }_{2}\right] =\mathbf{ad}_{\boldsymbol{\xi }_{1}}%
\boldsymbol{\xi }_{2}=\left( \mathbf{x}_{1}\times \mathbf{x}_{2},\mathbf{x}%
_{1}\times \mathbf{y}_{2}+\mathbf{y}_{1}\times \mathbf{x}_{2}\right) $, for $%
\boldsymbol{\xi }_{i}=\left( \mathbf{x}_{i},\mathbf{y}_{i}\right) \in {%
\mathbb{R}}^{6}\cong se\left( 3\right) $. The right-trivialized differential
of (\ref{expSE3}), and its inverse, in terms of canonical coordinates $%
\boldsymbol{\xi }=\left( \mathbf{x},\mathbf{y}\right) \in {\mathbb{R}}^{6}$,
are in closed form%
\begin{eqnarray}
\mathbf{dexp}_{\boldsymbol{\xi }} &=&%
\begin{bmatrix}
\mathbf{dexp}_{\mathbf{x}} & \mathbf{0} \\ 
\left( \mathrm{D}_{\mathbf{x}}\mathbf{dexp}\right) 
\hspace{-0.5ex}%
\left( \mathbf{y}\right) & \mathbf{dexp}_{\mathbf{x}}%
\end{bmatrix}
\label{dexpSE31} \\
\mathbf{dexp}_{\boldsymbol{\xi }}^{-1} &=&%
\begin{bmatrix}
\mathbf{dexp}_{\mathbf{x}}^{-1} & \mathbf{0} \\ 
(\mathrm{D}_{\mathbf{x}}\mathbf{dexp}^{-1})%
\hspace{-0.6ex}%
\left( \mathbf{y}\right) & \mathbf{dexp}_{\mathbf{x}}^{-1}%
\end{bmatrix}
\label{DexpInvSE3}
\end{eqnarray}%
where $\mathbf{dexp}_{\mathbf{x}}$ and $\mathbf{dexp}_{\mathbf{x}}^{-1}$ are
as in (\ref{dexpSO3}) and (\ref{SO3dexpInv1}), and%
\begin{eqnarray}
\left( \mathrm{D}_{\mathbf{x}}\mathbf{dexp}\right) 
\hspace{-0.5ex}%
\left( \mathbf{y}\right) &=&\tfrac{\beta }{2}\tilde{\mathbf{y}}+\delta
\left( \tilde{\mathbf{x}}\tilde{\mathbf{y}}+\tilde{\mathbf{y}}\tilde{\mathbf{%
x}}\right) +\tfrac{\mathbf{x}^{T}\mathbf{y}}{\left\Vert \mathbf{x}%
\right\Vert ^{2}}\left( \alpha -\beta \right) \tilde{\mathbf{x}}+\tfrac{%
\mathbf{x}^{T}\mathbf{y}}{\left\Vert \mathbf{x}\right\Vert ^{2}}\left( 
\tfrac{\beta }{2}-3\delta \right) \tilde{\mathbf{x}}^{2}
\label{diffDexpSO33} \\
(\mathrm{D}_{\mathbf{x}}\mathbf{dexp}^{-1})%
\hspace{-0.5ex}%
\left( \mathbf{y}\right) &=&-\tfrac{1}{2}\tilde{\mathbf{y}}+\frac{1}{%
\left\Vert \mathbf{x}\right\Vert ^{2}}\left( 1-\gamma \right) \left( \tilde{%
\mathbf{x}}\tilde{\mathbf{y}}+\tilde{\mathbf{y}}\tilde{\mathbf{x}}\right) +%
\tfrac{\mathbf{x}^{T}\mathbf{y}}{\left\Vert \mathbf{x}\right\Vert ^{4}}%
\left( \tfrac{1}{\beta }+\gamma -2\right) \tilde{\mathbf{x}}^{2}.
\label{diffDexpInvSO3}
\end{eqnarray}%
The directional derivatives of (\ref{dexpSE31}) and (\ref{DexpInvSE3}) are%
\begin{eqnarray}
(\mathrm{D}_{\boldsymbol{\xi }}\mathbf{dexp})%
\hspace{-0.5ex}%
\left( \boldsymbol{\eta }\right) &=&%
\begin{bmatrix}
(\mathrm{D}_{\mathbf{x}}\mathbf{dexp})%
\hspace{-0.5ex}%
\left( \mathbf{u}\right) & \mathbf{0} \\ 
(\mathrm{D}_{\boldsymbol{\xi }}\mathbf{Ddexp})%
\hspace{-0.5ex}%
\left( \boldsymbol{\eta }\right) & (\mathrm{D}_{\mathbf{x}}\mathbf{dexp})%
\hspace{-0.5ex}%
\left( \mathbf{u}\right)%
\end{bmatrix}
\label{diffDexpSE3} \\
(\mathrm{D}_{\boldsymbol{\xi }}\mathbf{dexp}^{-1})%
\hspace{-0.5ex}%
\left( \boldsymbol{\eta }\right) &=&%
\begin{bmatrix}
(\mathrm{D}_{\mathbf{x}}\mathbf{dexp}^{-1})%
\hspace{-0.5ex}%
\left( \mathbf{u}\right) & \mathbf{0} \\ 
(\mathrm{D}_{\boldsymbol{\xi }}\mathbf{Ddexp}^{-1})%
\hspace{-0.5ex}%
\left( \boldsymbol{\eta }\right) & (\mathrm{D}_{\mathbf{x}}\mathbf{dexp}%
^{-1})%
\hspace{-0.5ex}%
\left( \mathbf{u}\right)%
\end{bmatrix}
\label{diffDexpInvSE3}
\end{eqnarray}%
where $\boldsymbol{\eta }=\left[ \mathbf{u},\mathbf{v}\right] $. Therein the
directional derivatives of matrices $\mathbf{Ddexp}$ and $\mathbf{Ddexp}%
^{-1} $ possesses the explicitly form%
\begin{align}
(\mathrm{D}_{\boldsymbol{\xi }}\mathbf{Ddexp})%
\hspace{-0.5ex}%
\left( \boldsymbol{\eta }\right) =~& \tfrac{\beta }{2}\tilde{\mathbf{v}}+%
\tfrac{\alpha -\beta }{\left\Vert \mathbf{x}\right\Vert ^{2}}(\mathbf{x}^{T}%
\mathbf{u})\tilde{\mathbf{y}}+\tfrac{\alpha -\beta }{\left\Vert \mathbf{x}%
\right\Vert ^{2}}(\mathbf{x}^{T}\mathbf{y})\tilde{\mathbf{u}}+\tfrac{\beta
/2-3\delta }{\left\Vert \mathbf{x}\right\Vert ^{2}}\left( (\mathbf{x}^{T}%
\mathbf{u})(\tilde{\mathbf{x}}\tilde{\mathbf{y}}+\tilde{\mathbf{y}}\tilde{%
\mathbf{x}})+(\mathbf{x}^{T}\mathbf{y})(\tilde{\mathbf{x}}\tilde{\mathbf{u}}+%
\tilde{\mathbf{u}}\tilde{\mathbf{x}})\right) +\delta \left( \tilde{\mathbf{x}%
}\tilde{\mathbf{v}}+\tilde{\mathbf{v}}\tilde{\mathbf{x}}+\tilde{\mathbf{y}}%
\tilde{\mathbf{u}}+\tilde{\mathbf{u}}\tilde{\mathbf{y}}\right)
\label{diff2dexpSO31} \\
& +\tfrac{1}{\left\Vert \mathbf{x}\right\Vert ^{2}}\left( \left( \alpha
-\beta \right) (\mathbf{x}^{T}\mathbf{v+y}^{T}\mathbf{u})-\tfrac{1}{2}\beta (%
\mathbf{x}^{T}\mathbf{u})(\mathbf{x}^{T}\mathbf{y})+\tfrac{1-5\alpha +4\beta 
}{\left\Vert \mathbf{x}\right\Vert ^{2}}(\mathbf{x}^{T}\mathbf{y})(\mathbf{x}%
^{T}\mathbf{u})\right) \tilde{\mathbf{x}}  \notag \\
& +\tfrac{1}{\left\Vert \mathbf{x}\right\Vert ^{2}}\left( (\tfrac{1}{2}\beta
-3\delta )(\mathbf{x}^{T}\mathbf{v+y}^{T}\mathbf{u})+\tfrac{1}{\left\Vert 
\mathbf{x}\right\Vert ^{2}}(\alpha -\tfrac{7}{2}\beta +15\delta )(\mathbf{x}%
^{T}\mathbf{y})(\mathbf{x}^{T}\mathbf{u})\right) \tilde{\mathbf{x}}^{2}. 
\notag \\
(\mathrm{D}_{\boldsymbol{\xi }}\mathbf{Ddexp}^{-1})%
\hspace{-0.5ex}%
\left( \mathbf{U}\right) =& -\tfrac{1}{2}\tilde{\mathbf{v}}+\tfrac{1}{%
\left\Vert \mathbf{x}\right\Vert ^{2}}\left( \tfrac{1}{4}(\mathbf{x}^{T}%
\mathbf{u})(\tilde{\mathbf{x}}\tilde{\mathbf{y}}+\tilde{\mathbf{y}}\tilde{%
\mathbf{x}})+\left( 1-\gamma \right) \left( \tilde{\mathbf{x}}\tilde{\mathbf{%
v}}+\tilde{\mathbf{v}}\tilde{\mathbf{x}}+\tilde{\mathbf{y}}\tilde{\mathbf{u}}%
+\tilde{\mathbf{u}}\tilde{\mathbf{y}}\right) \right)
\label{diff2dexpInvSO31} \\
& +\tfrac{1}{\left\Vert \mathbf{x}\right\Vert ^{4}}%
\Big%
((\mathbf{x}^{T}\mathbf{u})\left( \gamma \left( 1+\gamma \right) -2\right) (%
\tilde{\mathbf{x}}\tilde{\mathbf{y}}+\tilde{\mathbf{y}}\tilde{\mathbf{x}})+(%
\tfrac{1}{\beta }+\gamma -2)(\mathbf{x}^{T}\mathbf{y})(\tilde{\mathbf{x}}%
\tilde{\mathbf{u}}+\tilde{\mathbf{u}}\tilde{\mathbf{x}})  \notag \\
& +%
\big%
((\tfrac{1}{\beta }+\gamma -2)(\mathbf{x}^{T}\mathbf{v}+\mathbf{y}^{T}%
\mathbf{u})-\tfrac{1}{4}(\mathbf{x}^{T}\mathbf{y})(\mathbf{x}^{T}\mathbf{u})%
\big%
)\tilde{\mathbf{x}}^{2}%
\Big%
)  \notag \\
& +\tfrac{1}{\left\Vert \mathbf{x}\right\Vert ^{6}}(\mathbf{x}^{T}\mathbf{y}%
)(\mathbf{x}^{T}\mathbf{u})\left( 8-\gamma \left( 3+\gamma \right) -\tfrac{2%
}{\beta }\left( 1+\gamma \right) \right) \tilde{\mathbf{x}}^{2}.  \notag
\end{align}%
The log map on $SE(3)$ is and $SO(3)$%
\begin{equation}
h=%
\begin{bmatrix}
\mathbf{R} & \mathbf{r} \\ 
\mathbf{0} & 1%
\end{bmatrix}%
\in SE\left( 3\right) \mapsto \hat{\boldsymbol{\xi }}=\log \left( h\right) =%
\begin{bmatrix}
\tilde{\mathbf{x}} & \mathbf{dexp}_{\tilde{\mathbf{x}}}^{-1}\mathbf{r} \\ 
\mathbf{0} & 1%
\end{bmatrix}%
\in se\left( 3\right)  \label{logSO3}
\end{equation}%
or in vector notation $\boldsymbol{\xi }=[\mathbf{x},\mathbf{dexp}_{\mathbf{x%
}}^{-1}\mathbf{r}]\in {\mathbb{R}}^{6}$, where $\mathbf{x}\in {\mathbb{R}}%
^{3}$ is obtained with the log on $SO\left( 3\right) $ 
\begin{equation}
\mathbf{R}\in SO\left( 3\right) \mapsto \tilde{\mathbf{x}}=\log \mathbf{R}=%
\frac{1}{2\sin \varphi }\left( \mathbf{R}-\mathbf{R}^{T}\right) \in so\left(
3\right)  \label{logSE3}
\end{equation}%
with angle $\varphi =\frac{1}{2}\arccos \left( \mathrm{tr}\left( \mathbf{R}%
\right) -1\right) $.%
%

\section{\label{secDeCast}De Casteljau Algorithm on Lie Groups}

An easy and flexible way to represent 3-dimensional curves in $\mathbb{R}%
^{3} $ (and generally on any metric vector space) in terms of control
polygons are B\'{e}zier curves \cite{Beizier1986}. The classical De
Casteljau algorithm \cite{deCasteljau1959,KrautterParizot1971} is a
computational way to generate the control polygon from a given set of
control points \cite{Farin1997}. On curved metric spaces, particularly on
Riemannian manifolds, including symmetric spaces, and Lie groups, the
definition of 'control polygons' must respect the metric structure of the
space. Ideally, B\'{e}zier curves should be geodesics w.r.t. a certain
metric with certain invariance. This complicates the derivation of De
Casteljau type algorithms. There is a body of literature on polynomial
curves, B\'{e}zier curves, and B-splines on (symmetric) Riemannian manifolds
and Lie groups \cite%
{CrouchLeite1995,CrouchKunLeite1999,Camarinha2001,Popiel2006,BogfjellmoModinVerdier2018,ZhangNoakes2019,WuCarricato2020,SprottRavani2002,Ding2002}%
. The crucial differences when designing a De Casteljau algorithm on curved
spaces is that the subdivision properties and smoothness results of the B%
\'{e}zier curves are not generally inherited from the algorithm on vector
spaces \cite{Popiel2006}.

For the special case of compact Lie groups, and particularly for $SO(3)$ and
the quaternion group, De Casteljau algorithms where introduced in \cite%
{ParkRavani-JMD1995}. This was extended in \cite{GeRavani-JMD1994} to
spatial motions described by dual quaternions, i.e. the double cover of $%
SE\left( 3\right) $, that relies on geodesics w.r.t. a left-invariant
metric. The algorithm 9 presented below is an adapted version of \cite%
{ParkRavani-JMD1995}. It rests on the assumption that, at the different
steps of the algorithm, curves between points are expressed as motion in a
1-parameter subgroup. For the compact rotation group $SO\left( 3\right) $
these are geodesics, whereas for the non-compact rigid body motion group $%
SE\left( 3\right) $ these are screw motions (not geodesics). Possible
refinements can be found in \cite{Popiel2006,ZhangNoakes2019}. Algorithm 9
constructs a B\'{e}zier curve from $n+1$ given points points $h_{0},\ldots
,h_{n}$ in a Lie group $G$, on which an exp map is defined. The curve
interpolates between terminal points $h_{0}$ and $h_{n}$, while the
remaining points serve as 'control points'. Intermediate points $h_{i}^{k}$
are computed recursively. The B\'{e}zier curve of order $n$ is then obtained
as $h_{0}^{n}\left( \tau \right) $. At every $\tau $, its evaluation
requires $n$ nested evaluations of the log and exp map, which dictates the
computational effort. The algorithm complexity is quadratic in the number $n$
of points. Each $h_{i}^{k}$ depends on $\tau $, and symbolic construction of
the B\'{e}zier curve is generally not possible, even when closed form
expressions for exp and log are available. The algorithm is used to evaluate
the curve at given $\tau $. The algorithm assumes zero initial and terminal
velocities. The case for non-zero boundary velocities is easily
incorporated. Details can be found in \cite{CrouchKunLeite1999}.

\begin{tabular}[t]{l}
\  \\ 
\textbf{Algorithm 9: De Casteljau Algorithm on Lie Group }$G$%
\vspace{1.5ex}
\\ 
\begin{tabular}{l}
\begin{tabular}[t]{l}
\textbf{1. Input:}%
\vspace{0.5ex}
\\ 
\hspace{2ex}%
\begin{tabular}{ll}
\textbullet
& Samples $h_{0},h_{1},\ldots ,h_{n}\in G$ \\ 
\textbullet
& Parameter value $\tau \in \left[ 0,1\right] $%
\end{tabular}%
\end{tabular}
\\ 
\begin{tabular}[t]{l}
\textbf{2. Recursive Computation of 'Control Points':} \\ 
\hspace{2ex}%
\textbf{FOR} $k=1,\ldots ,n$ \textbf{DO}%
\vspace{0.5ex}
\\ 
\hspace{2ex}%
\begin{tabular}[t]{l}
\textbf{FOR} $i=0,\ldots ,n-k$ \textbf{DO}%
\vspace{0.5ex}
\\ 
\begin{tabular}[t]{ll}
\textbullet
& $\boldsymbol{\xi }_{i}^{k}\left( \tau \right) :=\log \left(
(h_{i}^{k-1})^{-1}h_{i+1}^{k-1}\right) $ \\ 
\textbullet
& $h_{i}^{k}\left( \tau \right) :=h_{i}^{k-1}\left( \tau \right) \exp (\tau 
\boldsymbol{\xi }_{i}^{k}\left( \tau \right) )$%
\end{tabular}
\\ 
\textbf{END}%
\end{tabular}%
\vspace{0.5ex}
\\ 
\ \ \ \ \textbf{END}%
\end{tabular}%
\hspace{-5ex}
\\ 
\begin{tabular}[t]{l}
\textbf{3. Value of B\'{e}zier curve at} $\tau \in \left[ 0,1\right] $: $%
h_{0}^{n}\left( \tau \right) $%
\vspace{0.5ex}
\\ 
\end{tabular}%
\end{tabular}
\\ 
\ 
\end{tabular}

%
\bibliographystyle{spmpsci}
\bibliography{MotionInterpolation2}

\end{document}